\providecommand{\tabularnewline}{\\}
  \theoremstyle{remark}
  \newtheorem*{note*}{\protect\notename}
  \theoremstyle{remark}
  \newtheorem*{notation*}{\protect\notationname}
\theoremstyle{plain}
\newtheorem{thm}{\protect\theoremname}[section]
  \theoremstyle{plain}
  \newtheorem{prop}[thm]{\protect\propositionname}
  \theoremstyle{definition}
  \newtheorem{defn}[thm]{\protect\definitionname}
  \theoremstyle{plain}
  \newtheorem{lem}[thm]{\protect\lemmaname}
  \theoremstyle{definition}
  \newtheorem*{example*}{\protect\examplename}
 \newlist{casenv}{enumerate}{4}
 \setlist[casenv]{leftmargin=*,align=left,widest={iiii}}
 \setlist[casenv,1]{label={{\itshape\ \casename} \arabic*.},ref=\arabic*}
 \setlist[casenv,2]{label={{\itshape\ \casename} \roman*.},ref=\roman*}
 \setlist[casenv,3]{label={{\itshape\ \casename\ \alph*.}},ref=\alph*}
 \setlist[casenv,4]{label={{\itshape\ \casename} \arabic*.},ref=\arabic*}
  \theoremstyle{plain}
  \newtheorem{cor}[thm]{\protect\corollaryname}
\newcommand{\xyR}[1]{
 \xydef@\xymatrixrowsep@{#1}}
\newcommand{\xyC}[1]{
  \xydef@\xymatrixcolsep@{#1}}
\date{}
    \DeclareFontFamily{U}{wncy}{}
    \DeclareFontShape{U}{wncy}{m}{n}{<->wncyr10}{}
    \DeclareSymbolFont{mcy}{U}{wncy}{m}{n}
    \DeclareMathSymbol{\Sh}{\mathord}{mcy}{"58} 
  \providecommand{\corollaryname}{Corollary}
  \providecommand{\definitionname}{Definition}
  \providecommand{\examplename}{Example}
  \providecommand{\lemmaname}{Lemma}
  \providecommand{\notationname}{Notation}
  \providecommand{\notename}{Note}
  \providecommand{\propositionname}{Proposition}
 \providecommand{\casename}{Case}
\providecommand{\theoremname}{Theorem}
\begin{document}
\selectlanguage{english}%
\global\long\def\zz{\mathbb{Z}}
\global\long\def\im{\operatorname{im}}
\global\long\def\re{\operatorname{re}}
\global\long\def\rr{\mathbb{R}}
\global\long\def\cc{\mathbb{C}}
\global\long\def\vv{\mathbb{V}}
\global\long\def\SL{\operatorname{SL}}
\global\long\def\ord{\operatorname{ord}}
\global\long\def\Spec{\operatorname{Spec}}
\global\long\def\qq{\mathbb{Q}}
\global\long\def\pp{\mathbb{P}}
\global\long\def\Hom{\operatorname{Hom}}
\global\long\def\id{\operatorname{id}}
\global\long\def\gcd{\operatorname{gcd}}
\global\long\def\lcm{\operatorname{lcm}}
\global\long\def\Sym{\operatorname{Sym}}
\global\long\def\ff{\mathbb{F}}
\global\long\def\nn{\mathbb{N}}
\global\long\def\hh{\mathbb{H}}
\global\long\def\Ann{\operatorname{Ann}}
\global\long\def\End{\operatorname{End}}
\global\long\def\sgn{\operatorname{sgn}}
 \global\long\def\rk{\operatorname{rk}}
\global\long\def\Br{\operatorname{Br}}
 \global\long\def\Hasse{\operatorname{Hasse}}
\global\long\def\GL{\operatorname{GL}}
\global\long\def\Coll{\operatorname{Col}}
 \global\long\def\Tr{\operatorname{Tr}}
\global\long\def\aa{\mathbb{A}}
\global\long\def\ld{\operatorname{in}_{<}}
\global\long\def\ob{\operatorname{Ob}}
\global\long\def\mor{\operatorname{mor}}
\global\long\def\ext{\operatorname{Ext}}
\global\long\def\tor{\operatorname{Tor}}
\global\long\def\cok{\operatorname{coker}}
\global\long\def\ilim{\varprojlim}
\global\long\def\Gal{\operatorname{Gal}}

\title{On the Monodromy and Galois Group of Conics Lying on Heisenberg Invariant
Quartic K3 Surfaces}

\author{Florian Bouyer}
\maketitle
\selectlanguage{british}%
\begin{abstract}
In \cite{eklund2010}, Eklund showed that a general $(\zz/2\zz)^{4}$-invariant
quartic K3 surface contains at least $320$ conics. In this paper
we analyse the field of definition of those conics as well as their
Monodromy group. As a result, we prove that the moduli space $(\zz/2\zz)^{4}$-invariant
quartic K3 surface with a marked conic has $10$ irreducible components. 
\end{abstract}
\selectlanguage{english}%

\section{Introduction}

\selectlanguage{british}%
Consider the $(\zz/2\zz)^{4}$ subgroup of $\mathrm{Aut}(\pp_{\overline{\qq}}^{3})$
generated by the four transformations
\[
[x:y:z:w]\mapsto[y:x:w:z],[z:w:x:y],[x:y:-z:-w],[x:-y:z:-w].
\]
 The family of all quartic surfaces in $\pp_{\overline{\qq}}^{3}$
which are invariant under these transformations is known to be parameterised
by $\pp^{4}$ and has been studied extensively by \cite{barth1994,eklund2010}.
In his paper, \cite{eklund2010}, Eklund shows that a general such
quartic surface contains at least $320$ conics. This paper is lead
by the question, if such a surface is defined over a number field
$K$, what is the smallest extension for the conics on it to be defined?
Upon answering this question we look into the Monodromy group of the
conics and link it back to the Galois group of the field extension.

As a result of this paper, we deduce that the moduli space of pairs
$(X,C)$, where $X$ is an Heisenberg-invariant quartic K3 surfaces
and $C$ a conic on $X$, has 10 irreducible components. We contrast
this with universal Severi varieties of K3 surfaces. Let $X$ be a
primitive K3 surface of genus $g$, and $L$ an ample primitive line
bundle on $X$ such that $L^{2}=2g-2$ (so $g>1$). A Severi variety
of $X$ is $V_{k,h}(X)=\{C\in\left|kL\right|:C\mathrm{\, is\, irreducible\, nodal\, and\,}g(C)=h\}$.
The universal variety $\mathcal{V}_{k,h}^{g}$ can be considered as
the moduli space of pairs $(X,C)$, where $X$ is a primitive K3 surface
of genus $g$, and $C\in V_{k,h}(X)$. It is conjecture that all universal
Severi varieties $\mathcal{V}_{k,h}^{g}$ are irreducible. This conjecture
has been proven by Ciliberto and Dedieu for $3\leq g\leq11$, $g\neq10$
in \cite{MR2945592}. Kemeny showed that $\mathcal{\overline{V}}_{0,1}^{g}$
is connected for $g>2$ in \cite{MR3033964}, if it can be proven
that $\overline{\mathcal{V}}_{0,1}^{g}$ is smooth then it is irreducible. 

In Section \ref{sec:Background} we set the notations and review known
results that we will use in the rest of the paper. In Section \ref{sec:The-Galois-Group}
we start with the K3 surface parameterised by a point $p\in\pp^{4}$
and find the explicit equations, in terms of $p$, of the lines and
conics lying said K3 surface. We use this to construct the field extension
and show that, in general, the Galois group of the field extension
is $C_{2}^{10}$. We then look at the Monodromy group of the conics
in Section \ref{sec:Monodromy-Group}. In particular, we find generators
of the group and how it acts on the $320$ conics as well as showing
it is isomorphic to the Galois group. 
\begin{note*}
Throughout this paper, most calculations and equation rearrangements
were done using the computer algebra package Magma \cite[V2.21-4 ]{MR1484478}.
\end{note*}

\section{Background\label{sec:Background}}

We will be studying the following scheme
\[
\mathcal{X}:=\{A(x^{4}+y^{4}+z^{4}+w^{4})+Bxyzw+C(x^{2}y^{2}+z^{2}w^{2})+D(x^{2}z^{2}+y^{2}w^{2})+E(x^{2}w^{2}+y^{2}z^{2})=0\}\subset\pp_{[x:y:z:w]}^{3}\times\pp_{[A:B:C:D:E]}^{4}
\]
defined over $\overline{\qq}$. This can be viewed as a family of
quartic surfaces over $\pp^{3}$ parameterised by points $[A,B,C,D,E]$
in $\pp^{4}$.
\begin{notation*}
We will use $X_{p}$ and $[A,B,C,D,E]$ to denote the surface parametrised
by the point $p=[A,B,C,D,E]\in\pp^{4}$. \end{notation*}
\begin{note*}
In the case the surface $X_{p}$ is smooth, then it is a K3 surface.
\end{note*}
Consider the group $\Omega$ acting on $\pp^{3}\times\pp^{4}$ generated
by the following five elements
\[
[x,y,z,w,A,B,C,D,E]\mapsto\begin{cases}
[x,y,z,-w,A,-B,C,D,E] & \phi_{1}\\
{}[x,y,w,z,A,B,C,E,D] & \phi_{2}\\
{}[x,z,y,w,A,B,D,C,E] & \phi_{3}\\
{}[x,y,iz,iw,A,-B,C,-D,-E] & \phi_{4}\\
{}[x-y,x+y,z-w,z+w,2A+C,8(D-E),12A-2C,B+2D+2E,-B+2D+2E] & \phi_{5}
\end{cases}.
\]
This group fixes $\mathcal{X}$ and hence is a subgroup of its automorphism
group. While this group is rather large with order $2^{4}\cdot6!$,
we can pick out the following normal subgroup generated by 
\begin{itemize}
\item $\gamma_{1}:=\phi_{3}\phi_{4}^{2}\phi_{3}\phi_{5}^{2}$,
\item $\gamma_{2}:=\phi_{4}^{2}\phi_{3}\phi_{5}^{2}\phi_{3}$,
\item $\gamma_{3}:=\phi_{4}^{2}$\label{pg:gamma_3},
\item $\gamma_{4}:=\phi_{3}\phi_{4}^{2}\phi_{3}$,
\end{itemize}
which we denote by $\Gamma$. The group $\Gamma$ consists of all
elements of $\Omega$ which fix $\pp_{[A:B:C:D:E]}^{4}$ in $\pp^{3}\times\pp^{4}$.
In particular upon picking a point $p\in\pp^{4}$ we have that $\Gamma$
is a subgroup of $\mathrm{Aut}(X_{p})$ (when projecting the elements
of $\Gamma$ onto the $\pp_{[x:y:z:w]}^{3}$ component). Explicitly,
when regarding $\Gamma$ as acting on $\pp^{3}$, we have that its
generators are
\[
[x,y,z,w]\mapsto\begin{cases}
[y,x,w,z] & \gamma_{1}\\
{}[z,w,x,y] & \gamma_{2}\\
{}[x,y,-z,-w] & \gamma_{3}\\
{}[x,-y,z,-w] & \gamma_{4}
\end{cases}.
\]
From this we know that $\Gamma\cong C_{2}^{4}$. We also note that
$\Omega/\Gamma\cong S_{6}$ (but $\Omega\ncong C_{2}^{4}\times S_{6}$
in particular because $\Omega$ has trivial centre). \foreignlanguage{english}{We
also make a note of the points of $\pp^{3}$ which are fixed by $\gamma\in\Gamma\setminus\{\id\}$
(restricted to $\pp^{3}$). Each such $\gamma$ has two skew line
$L,\overline{L}$ of fixed points which are given by its $(+1)$ and
$(-1)$ eigenspace, respectively its $(+i)$ and $(-i)$ eigenspace.}
The lines are given in Table~\ref{tab:List-of-Invariant} (more information
is given which will be explained after Proposition \ref{prop:sing pt from sufracce}).
Note that every pair of lines is also fixed by any $\gamma\in\Gamma$
(on top of containing the fixed points of a particular $\gamma$).

\selectlanguage{english}%
\begin{table}[h]
\begin{centering}
\begin{tabular}{|c|l|l|l|}
\cline{2-4} 
\multicolumn{1}{c|}{\selectlanguage{british}%
\selectlanguage{english}%
} & $L_{i}$ & $\overline{L_{i}}$ & \selectlanguage{british}%
Segre Plane\selectlanguage{english}%
\tabularnewline
\hline 
$\gamma_{1}$ & $[s:s:t:t]$ & $[s:-s:t:-t]$ & \selectlanguage{british}%
$q_{+C}=p_{+0}=p_{-1}=0$\selectlanguage{english}%
\tabularnewline
\hline 
$\gamma_{2}$ & $[s:t:s:t]$ & $[s:t:-s:-t]$ & \selectlanguage{british}%
$q_{+D}=p_{+0}=p_{-2}=0$\selectlanguage{english}%
\tabularnewline
\hline 
\selectlanguage{british}%
$\gamma_{1}\gamma_{2}$\selectlanguage{english}%
 & $[s:t:t:s]$ & $[s:t:-t:-s]$ & \selectlanguage{british}%
$q_{+E}=p_{+0}=p_{-3}=0$\selectlanguage{english}%
\tabularnewline
\hline 
\selectlanguage{british}%
$\gamma_{3}$\selectlanguage{english}%
 & \selectlanguage{british}%
$[s:t:0:0]$\selectlanguage{english}%
 & \selectlanguage{british}%
$[0:0:s:t]$\selectlanguage{english}%
 & \selectlanguage{british}%
$A=q_{+C}=q_{-C}=0$\selectlanguage{english}%
\tabularnewline
\hline 
\selectlanguage{british}%
$\gamma_{1}\gamma_{3}$\selectlanguage{english}%
 & \selectlanguage{british}%
$[s:-s:t:t]$\selectlanguage{english}%
 & \selectlanguage{british}%
$[s:s:t:-t]$\selectlanguage{english}%
 & \selectlanguage{british}%
$q_{-C}=p_{-0}=p_{+1}=0$\selectlanguage{english}%
\tabularnewline
\hline 
\selectlanguage{british}%
$\gamma_{2}\gamma_{3}$\selectlanguage{english}%
 & \selectlanguage{british}%
$[s:t:is:it]$\selectlanguage{english}%
 & \selectlanguage{british}%
$[s:t:-is:-it]$\selectlanguage{english}%
 & \selectlanguage{british}%
$q_{-D}=p_{-1}=p_{+3}=0$\selectlanguage{english}%
\tabularnewline
\hline 
\selectlanguage{british}%
$\gamma_{1}\gamma_{2}\gamma_{3}$\selectlanguage{english}%
 & \selectlanguage{british}%
$[s:t:it:is]$\selectlanguage{english}%
 & \selectlanguage{british}%
$[s:t:-it:-is]$\selectlanguage{english}%
 & \selectlanguage{british}%
$q_{-E}=p_{-1}=p_{+2}=0$\selectlanguage{english}%
\tabularnewline
\hline 
\selectlanguage{british}%
$\gamma_{4}$\selectlanguage{english}%
 & \selectlanguage{british}%
$[s:0:t:0]$\selectlanguage{english}%
 & \selectlanguage{british}%
$[0:s:0:t]$\selectlanguage{english}%
 & \selectlanguage{british}%
$A=q_{+D}=q_{-D}=0$\selectlanguage{english}%
\tabularnewline
\hline 
\selectlanguage{british}%
$\gamma_{1}\gamma_{4}$\selectlanguage{english}%
 & \selectlanguage{british}%
$[s:is:t:it]$\selectlanguage{english}%
 & \selectlanguage{british}%
$[s:-is:t:-it]$\selectlanguage{english}%
 & \selectlanguage{british}%
$q_{-C}=p_{-2}=p_{+3}=0$\selectlanguage{english}%
\tabularnewline
\hline 
\selectlanguage{british}%
$\gamma_{2}\gamma_{4}$\selectlanguage{english}%
 & \selectlanguage{british}%
$[s:t:-s:t]$\selectlanguage{english}%
 & \selectlanguage{british}%
$[s:t:s:-t]$\selectlanguage{english}%
 & \selectlanguage{british}%
$q_{+D}=p_{-0}=p_{+2}=0$\selectlanguage{english}%
\tabularnewline
\hline 
\selectlanguage{british}%
$\gamma_{1}\gamma_{2}\gamma_{4}$\selectlanguage{english}%
 & \selectlanguage{british}%
$[s:t:it:-is]$\selectlanguage{english}%
 & \selectlanguage{british}%
$[s:t:-it:is]$\selectlanguage{english}%
 & \selectlanguage{british}%
$q_{-E}=p_{+1}=p_{-2}=0$\selectlanguage{english}%
\tabularnewline
\hline 
\selectlanguage{british}%
$\gamma_{3}\gamma_{4}$\selectlanguage{english}%
 & \selectlanguage{british}%
$[s:0:0:t]$\selectlanguage{english}%
 & \selectlanguage{british}%
$[0:s:t:0]$\selectlanguage{english}%
 & \selectlanguage{british}%
$A=q_{+E}=q_{-E}=0$\selectlanguage{english}%
\tabularnewline
\hline 
\selectlanguage{british}%
$\gamma_{1}\gamma_{3}\gamma_{4}$\selectlanguage{english}%
 & \selectlanguage{british}%
$[s:-is:t:it]$\selectlanguage{english}%
 & \selectlanguage{british}%
$[s:is:t:-it]$\selectlanguage{english}%
 & \selectlanguage{british}%
$q_{-C}=p_{+2}=p_{-3}=0$\selectlanguage{english}%
\tabularnewline
\hline 
\selectlanguage{british}%
$\gamma_{2}\gamma_{3}\gamma_{4}$\selectlanguage{english}%
 & \selectlanguage{british}%
$[s:t:-is:it]$\selectlanguage{english}%
 & \selectlanguage{british}%
$[s:t:is:-it]$\selectlanguage{english}%
 & \selectlanguage{british}%
$q_{-D}=p_{+1}=p_{-3}=0$\selectlanguage{english}%
\tabularnewline
\hline 
\selectlanguage{british}%
$\gamma_{1}\gamma_{2}\gamma_{3}\gamma_{4}$\selectlanguage{english}%
 & \selectlanguage{british}%
$[s:t:t:-s]$\selectlanguage{english}%
 & \selectlanguage{british}%
$[s:t:-t:s]$\selectlanguage{english}%
 & \selectlanguage{british}%
$q_{+W}=p_{-0}=p_{+3}=0$\selectlanguage{english}%
\tabularnewline
\hline 
\end{tabular}
\par\end{centering}

\caption{\label{tab:List-of-Invariant}List of Invariant Lines}
\end{table}

\selectlanguage{british}%
\begin{notation*}
We shall denote by $\mathcal{L}$ the union of the $15$ pairs of
lines.
\end{notation*}
We now consider the cases when $X_{p}$ is not a smooth surface using
the following proposition taken from \cite[Prop 2.1]{eklund2010}.
\selectlanguage{english}%
\begin{prop}
\label{prop:Singular Conditions}Let $p=[A,B,C,D,E]\in\pp^{4}$. The
surface $X_{p}$ is singular if and only if
\[
A\cdot\Delta\cdot q_{+C}\cdot q_{-C}\cdot q_{+D}\cdot q_{-D}\cdot q_{+E}\cdot q_{-E}\cdot p_{+0}\cdot p_{+1}\cdot p_{+2}\cdot p_{+3}\cdot p_{-0}\cdot p_{-1}\cdot p_{-2}\cdot p_{-3}=0,
\]
where:
\begin{equation}
\Delta=16A^{3}+AB^{2}-4A(C^{2}+D^{2}+E^{2})+4CDE\label{eq:Delta}
\end{equation}
\begin{flalign*}
q_{+C}= & 2A+C & q_{+D}= & 2A+D & q_{+E}= & 2A+E\\
q_{-C}= & 2A-C & q_{-D}= & 2A-D & q_{-E}= & 2A-E\\
p_{+0}= & 4A+B+2C+2D+2E &  &  & p_{-0}= & 4A-B+2C+2D+2E\\
p_{+1}= & 4A+B+2C-2D-2E &  &  & p_{-1}= & 4A-B+2C-2D-2E\\
p_{+2}= & 4A+B-2C+2D-2E &  &  & p_{-2}= & 4A-B-2C+2D-2E\\
p_{+3}= & 4A+B-2C-2D+2E &  &  & p_{-3}= & 4A-B-2C-2D+2E.
\end{flalign*}
\end{prop}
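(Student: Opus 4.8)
The plan is to apply the projective Jacobian criterion to the quartic form $F$ defining $X_{p}$. Since $F$ is homogeneous of degree $4$, Euler's identity $4F=xF_{x}+yF_{y}+zF_{z}+wF_{w}$ shows that any common zero of the four partial derivatives already lies on $X_{p}$; hence $X_{p}$ is singular if and only if $F_{x},F_{y},F_{z},F_{w}$ have a common zero in $\pp^{3}$, which in turn is equivalent to the vanishing of the discriminant $R(A,B,C,D,E):=\operatorname{Res}(F_{x},F_{y},F_{z},F_{w})$. The proposition therefore reduces to the set-theoretic equality $V(R)=V(G)$ in $\pp^{4}$, where $G$ denotes the displayed product; note that $G$ is squarefree, being a product of $15$ distinct linear forms and the cubic $\Delta$.

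For the inclusion $V(G)\subseteq V(R)$ I would treat the sixteen irreducible factors one at a time, exhibiting for each factor $g$ a point of $\pp^{3}$ that is singular on $X_{p}$ for every $p\in V(g)$; as $V(g)$ is irreducible and the singular members form a closed set, this places all of $V(g)$ in $V(R)$. A short computation of the partials shows these points are exactly the $\Gamma$-special points of Table~\ref{tab:List-of-Invariant}. At points obtained from $[1:1:1:1]$ by changing signs and by multiplying coordinates by $i$ (the points of large $\Gamma$-stabiliser), all four partials reduce to a single scalar multiple of one of the eight forms $p_{\pm i}$, running through all of them as the point varies; for instance at $[1:1:i:i]$ one gets $F_{x}=F_{y}=p_{-1}$ and $F_{z}=F_{w}=-ip_{-1}$. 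At the six ``axis'' points such as $[1:1:0:0]$, where pairs of invariant lines meet, they reduce to one of $q_{\pm C},q_{\pm D},q_{\pm E}$; indeed $F_{x}=F_{y}=2(2A+C)$ and $F_{z}=F_{w}=0$ identically at $[1:1:0:0]$, so $q_{+C}=0$ forces a node. Finally at the coordinate point $[1:0:0:0]$ one has $F_{x}=4A$ and $F_{y}=F_{z}=F_{w}=0$, so $A=0$ forces a node.

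The cubic factor $\Delta$ is the only branch of the discriminant not produced by a $\Gamma$-fixed point, and locating its node is the crux. Here the singular point moves in a $\Gamma$-orbit of size $16$ as $p$ ranges over $V(\Delta)$, so it must be found by eliminating $x,y,z,w$ from the four partials rather than written down by inspection. The cubic shape is explained by the determinantal identity $\Delta=2\det M+AB^{2}$, where $M=\bigl(\begin{smallmatrix}2A & E & D\\ E & 2A & C\\ D & C & 2A\end{smallmatrix}\bigr)$; this reflects that $\Delta$ measures a degeneration of the quadratic form $A(x^{4}+\cdots)+C(x^{2}y^{2}+\cdots)+\cdots$ in the squares $(x^{2},y^{2},z^{2},w^{2})$, coupled through the $Bxyzw$ term. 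I would use this identity to produce the node explicitly and to verify that $\Delta(p)=0$ is both necessary and sufficient for its existence.

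For the reverse inclusion $V(R)\subseteq V(G)$ I would compute the resultant $R$ symbolically and factor it, confirming that its radical is precisely $G$ so that the sixteen hypersurfaces above exhaust the components of the discriminant; exhibiting one smooth member $X_{p}$ guarantees $R\not\equiv 0$, so that this factorisation is meaningful. The elimination and factorisation are heavy but entirely mechanical, of the kind carried out in Magma elsewhere in the paper. I expect the genuine difficulty to be concentrated in two places: handling $\Delta$ explicitly in the forward direction, and the completeness of the reverse direction, that is, certifying that no further irreducible factor of $R$ has been overlooked and that $R$ and $G$ share the same radical rather than $G$ merely dividing $R$ to higher multiplicity.
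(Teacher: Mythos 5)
The paper offers no proof of this proposition at all: it is imported verbatim from Eklund \cite[Prop 2.1]{eklund2010}, so there is no in-paper argument to compare yours against, and your attempt has to be judged on its own terms. The framework is sound: by Euler's identity the common zeros of $F_{x},F_{y},F_{z},F_{w}$ lie on $X_{p}$, so singularity is equivalent to the vanishing of $\operatorname{Res}(F_{x},F_{y},F_{z},F_{w})$, and the problem becomes a set-theoretic comparison of that hypersurface with $V(G)$. The portions you actually carry out are correct: at $[1:1:0:0]$ the partials are $(2q_{+C},2q_{+C},0,0)$, at $[1:1:i:i]$ they are $(p_{-1},p_{-1},-ip_{-1},-ip_{-1})$, at $[1:0:0:0]$ they are $(4A,0,0,0)$, and running over the $\Gamma$-orbits of the intersection points of the invariant lines of Table~\ref{tab:List-of-Invariant} does produce all fifteen linear factors. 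The identity $\Delta=2\det M+AB^{2}$ also checks out.

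The two steps you yourself flag as hard are, however, exactly where the content of the proposition lives, and in the proposal they remain declared intentions rather than arguments. For the factor $\Delta$ you never exhibit a singular point for a general $p\in V(\Delta)$, nor show that such a point exists only when $\Delta=0$; the determinantal identity is suggestive but does not by itself locate the node. (What fills this hole is in effect Propositions~\ref{prop:Surface from sing pt} and~\ref{prop:sing pt from sufracce} of the paper: a point of $\pp^{3}\setminus\mathcal{L}$ determines the unique member of the family singular there, that member lies on $V(\Delta)$, and conversely eliminating $x,y,z,w$ from the four partials over a general point of $V(\Delta)$ recovers its $16$ nodes.) For the reverse inclusion, the resultant of four cubics in four variables has degree $27$ in the coefficients of each, hence degree $108$ in $A,\dots,E$; calling its computation and factorisation ``mechanical'' understates the obstacle, and you give no strategy for certifying that no component has been missed beyond hoping the factorisation terminates. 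A cleaner route is to stratify the putative singular point by its $\Gamma$-stabiliser: if it lies on $\mathcal{L}$, restricting the partials to the relevant line forces one of the fifteen linear forms to vanish (on $[s:s:t:t]$, for instance, the conditions reduce to $p_{+0}(s^{2}+t^{2})=p_{-1}(s^{2}-t^{2})=0$ together with the degenerate cases $st=0$, matching the Segre plane $q_{+C}=p_{+0}=p_{-1}=0$ of Table~\ref{tab:List-of-Invariant}); if it does not, its free $\Gamma$-orbit of $16$ singular points forces $\Delta=0$ by elimination. As written, the proposal is a correct skeleton with the $\Delta$ component and the completeness of the component list genuinely open.
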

\begin{defn}
The surface $S_{3}=\left\{ \Delta=0\right\} \subset\pp^{4}$ is the
\textit{Segre cubic}\textit{\emph{. The $15$ hyperplanes in $\pp^{4}$
defined by the $15$ equations above ($q_{+C},\dots,p_{-3}$) shall
be referred as the }}\textit{singular hyperplanes}\textit{\emph{. }}
\end{defn}
\selectlanguage{british}%
The Segre cubic has $10$ nodes, namely:\foreignlanguage{english}{
\[
[1,0,-2,-2,2],[1,0,-2,2,-2],[1,0,2,-2,-2],[1,0,2,2,2],
\]
\[
[0,-2,1,0,0],[0,2,1,0,0],[0,-2,0,1,0],[0,2,0,1,0],[0,-2,0,0,1],\mathrm{\, and\,}[0,2,0,0,1].
\]
We shall denote these $10$ points by $q_{i}$, $i\in[1,\dots,10]$,
as ordered above. These $10$ points have associated quartic in $\pp^{3}$,
which turns out to be quadrics of multiplicity $2$. We denote the
quadric associated to the point $q_{i}$ by $Q_{i}$. Explicitly they
are:
\[
x^{2}-y^{2}-z^{2}+w^{2}=0\,,\, x^{2}-y^{2}+z^{2}-w^{2}=0\,,\, x^{2}+y^{2}-z^{2}-w^{2}=0\,,\, x^{2}+y^{2}+z^{2}+w^{2}=0\,,
\]
\[
xy-zw=0\,,\, xy+zw=0\,,\, xz-yw=0\,,\, xz+yw=0\,,\, xw-yz=0\,,\,\mathrm{\, and\,}xw+yz=0.
\]
 }

\selectlanguage{english}%
It is known that for a general point on $S_{3}$ the corresponding
surface is a Kummer surface (\cite[Prop 2.2]{eklund2010}). The following
proposition links such Kummer surfaces with their singular points.
\begin{prop}
\label{prop:Surface from sing pt}Let $p=[x,y,z,w]$ be a point in
$\pp^{3}\setminus\mathcal{L}$ and let 
\begin{itemize}
\item $A=(yz+xw)(yz-xw)(xz+yw)(xz-yw)(zw+xy)(zw-xy)$,
\item {\small{$B=2xyzw(-x^{2}-y^{2}+z^{2}+w^{2})(-x^{2}+y^{2}+z^{2}-w^{2})(x^{2}-y^{2}+z^{2}-w^{2})(x^{2}+y^{2}+z^{2}+w^{2})$,}}{\small \par}
\item $C=(yz+xw)(yz-xw)(xz+yw)(xz-yw)(x^{4}+y^{4}-z^{4}-w^{4})$,
\item $D=(yz+xw)(yz-xw)(zw+xy)(zw-xy)(-x^{4}+y^{4}-z^{4}+w^{4})$,
\item $E=(xz+yw)(xz-yw)(zw+xy)(zw-xy)(x^{4}-y^{4}-z^{4}+w^{4}).$
\end{itemize}
\selectlanguage{british}%
Then the point $[A,B,C,D,E]$ lies on the Segre cubic and the associated
Kummer surface has the 16 singular points $\{\gamma([x,y,z,w]):\gamma\in\Gamma\}$. \end{prop}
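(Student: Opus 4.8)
The plan is to use the $\Gamma$-action to cut the problem down to a single point, and then to recognise the prescribed quantities $A,\dots,E$ as the maximal minors of the Jacobian of $X_{[A,B,C,D,E]}$ at $p$. First I would observe that, because $\Gamma$ acts on $X_{[A,B,C,D,E]}$ by automorphisms (it is the restriction to the fibre of the $\pp^{4}$-fixing subgroup of $\Omega$), its singular locus is $\Gamma$-stable. Hence it suffices to prove that $p=[x,y,z,w]$ itself is singular; the remaining points $\gamma(p)$ are then singular automatically. The hypothesis $p\notin\mathcal{L}$ does two jobs here. Since $\mathcal{L}$ is exactly the union of the fixed lines of the non-trivial elements of $\Gamma$ (Table~\ref{tab:List-of-Invariant}), the stabiliser of $p$ in $\Gamma$ is trivial, so the orbit $\{\gamma(p):\gamma\in\Gamma\}$ genuinely consists of $16$ distinct points; and, as one checks line by line, all of $A,\dots,E$ vanish along $\mathcal{L}$, so away from $\mathcal{L}$ the tuple $(A,\dots,E)$ is non-zero and the point $[A,\dots,E]\in\pp^{4}$ is well defined.

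Next I would set up the singularity condition at $p$. Writing $F$ for the defining quartic with coefficients $A,\dots,E$, the point $p$ is a singular point of $X_{[A,B,C,D,E]}$ precisely when $\partial_{X}F=\partial_{Y}F=\partial_{Z}F=\partial_{W}F=0$ at $p$; Euler's relation for the quartic $F$ then forces $F(p)=0$, so no separate incidence check is needed. Each partial derivative is linear in $(A,B,C,D,E)$, so these four equations read $M\cdot(A,B,C,D,E)^{\mathsf{T}}=0$ for an explicit $4\times5$ matrix $M$ whose entries are degree-$3$ monomials in $x,y,z,w$; for instance the first row is $(4x^{3},\,yzw,\,2xy^{2},\,2xz^{2},\,2xw^{2})$. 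The kernel of a generic matrix of this shape is spanned by its signed maximal minors, which are homogeneous of degree $12$ — exactly the degree of the prescribed $A,\dots,E$. The substance of this step is therefore to verify the four identities $M\cdot(A,B,C,D,E)^{\mathsf{T}}=0$, i.e.\ that the given factorised expressions are (up to sign) the $4\times4$ minors of $M$; this is a direct polynomial computation, delegated to Magma. Once it holds, $p$ is a node, and by the previous paragraph so are all $16$ points of its orbit.

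Finally I would show that $[A,\dots,E]$ lies on the Segre cubic, that is, $\Delta=0$. The straightforward route, matching the computational style of the paper, is to substitute the five expressions into $\Delta=16A^{3}+AB^{2}-4A(C^{2}+D^{2}+E^{2})+4CDE$ and verify that the resulting degree-$36$ form in $x,y,z,w$ vanishes identically. A lighter structural alternative avoids the largest expansion: having just shown $X_{[A,\dots,E]}$ is singular for every $p\notin\mathcal{L}$, which is a Zariski-dense set, Proposition~\ref{prop:Singular Conditions} makes the product $A\cdot\Delta\cdot q_{+C}\cdots p_{-3}$ vanish identically as a polynomial in $x,y,z,w$; since this polynomial ring is an integral domain, some factor must be the zero polynomial, and checking that none of the fifteen linear factors is (each a degree-$12$ check) isolates $\Delta\equiv0$. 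The only genuine obstacle anywhere in the argument is the sheer size of the polynomial identities in the last two steps; there is no conceptual difficulty once the minor/kernel interpretation is in place, and the verifications are carried out by computer algebra.
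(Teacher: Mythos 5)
Your proposal is correct and follows essentially the same route as the paper: the paper's (very terse) proof likewise characterises $[A,B,C,D,E]$ as the unique solution of the linear system $\frac{\partial F}{\partial X}(p)=\dots=\frac{\partial F}{\partial W}(p)=0$, with the algebra delegated to computer verification. You additionally spell out the $\Gamma$-orbit argument, the role of the hypothesis $p\notin\mathcal{L}$, and the Segre-cubic membership, all of which the paper leaves implicit; the one quibble is that ``all of $A,\dots,E$ vanish along $\mathcal{L}$'' does not by itself yield non-vanishing off $\mathcal{L}$ --- for that you need the $4\times5$ matrix $M$ to have rank $4$ there, which is precisely the uniqueness assertion in the paper's proof.
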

\selectlanguage{british}%
\begin{proof}
Let $F=A(X^{4}+Y^{4}+Z^{4}+W^{4})+\dots+E(X^{2}W^{2}+Z^{2}Y^{2})$.
By algebraic manipulation, the system of linear equations
\[
\frac{\partial F}{\partial X}(p)=\frac{\partial F}{\partial Y}(p)=\frac{\partial F}{\partial Z}(p)=\frac{\partial F}{\partial W}(p)=0
\]
has a unique solution $[A,B,C,D,E]\in\pp^{4}$ as given above. Note
that in particular, such a point $p$ defines uniquely the Kummer
surface of which it is a singular point.\end{proof}
\begin{prop}
\label{prop:sing pt from sufracce}Let $[A,B,C,D,E]$ be a point on
the Segre cubic not lying on one of the $15$ singular hyperplanes.
Then the associated surface's 16 singular points are $[x,y,z,w]$
where $x,y,z$ and $w$ solve the following equations
\begin{itemize}
\item $az^{8}+bz^{6}w^{2}+cz^{4}w^{4}+bz^{2}w^{6}+aw^{8}=0$, where $a=-A^{2}B^{2}$,
$b=4(2AD-CE)(2AE-CD)$ and $c=2(A^{2}B^{2}-2(E^{2}+D^{2})(4A^{2}+C^{2})+16ACDE)$,
\item $(4A^{2}-C^{2})(Ez^{2}-Dw^{2})y^{2}+A((4A^{2}-C^{2})(z^{4}-w^{4})+(E^{2}-D^{2})(z^{4}+w^{4}))+C(E^{2}-D^{2})z^{2}w^{2}=0$,
\selectlanguage{english}%
\item \textup{$2(C^{2}-4A^{2})xyzw+BCz^{2}w^{2}+ABw^{4}+ABz^{4}=0.$}
\end{itemize}
\end{prop}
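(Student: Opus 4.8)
The plan is to describe the singular locus of $X_{p}$ by its Jacobian ideal and then put that ideal into triangular form by eliminating $x$ and afterwards $y$. Writing $F$ for the defining quartic, a point of $\pp^{3}$ is singular on $X_{p}$ exactly when all four partial derivatives vanish, the equation $F=0$ itself being automatic by Euler's relation $4F=xF_{x}+yF_{y}+zF_{z}+wF_{w}$. Explicitly,
\begin{align*}
F_{x} &= 4Ax^{3}+Byzw+2Cxy^{2}+2Dxz^{2}+2Exw^{2},\\
F_{y} &= 4Ay^{3}+Bxzw+2Cx^{2}y+2Dyw^{2}+2Eyz^{2},\\
F_{z} &= 4Az^{3}+Bxyw+2Dx^{2}z+2Ey^{2}z+2Czw^{2},\\
F_{w} &= 4Aw^{3}+Bxyz+2Ex^{2}w+2Dy^{2}w+2Cz^{2}w.
\end{align*}
The three asserted equations are precisely a lexicographic triangular basis for this system: one relation purely in $z,w$, one relating $y^{2}$ to $z,w$, and one expressing $x$ linearly in $y,z,w$. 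The shape of the target equations predicts the elimination order, and a count confirms we aim at the right object: the octic in $z,w$ contributes $8$ values of $[z:w]$, the $y^{2}$-equation then two values $\pm y$, and the $x$-linear equation one value of $x$, for a total of $8\cdot2\cdot1=16$ points, matching the $16$ nodes of the Kummer surface.

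First eliminate $x$. The equations $F_{z}$ and $F_{w}$ are linear in the monomials $x^{2},xy,y^{2}$, so I would read them as the linear system
\begin{align*}
2Dz\,x^{2}+2Ez\,y^{2} &= -Bw\,xy-4Az^{3}-2Czw^{2},\\
2Ew\,x^{2}+2Dw\,y^{2} &= -Bz\,xy-4Aw^{3}-2Cz^{2}w,
\end{align*}
whose determinant is $4zw(D^{2}-E^{2})$. Solving by Cramer's rule expresses $x^{2}$ and $y^{2}$ as explicit polynomials in $xy$ and $z,w$; the $y^{2}$-solution, after the remaining $xy$ is removed using $F_{x},F_{y}$, collapses to the stated second equation, while the combination that is linear in $x$ produces the third. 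Here the hypotheses do real work: the coefficient $(4A^{2}-C^{2})=q_{+C}q_{-C}$ of $y^{2}$ in the second equation and of $x$ (together with $yzw$) in the third is nonzero precisely because $p$ avoids the hyperplanes $q_{\pm C}=0$, so these really are genuine defining relations rather than degenerate identities.

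It then remains to remove the residual $x,y$ dependence to reach a relation in $z,w$ alone. Forming the appropriate resultants among the relations already produced together with $F_{x},F_{y}$ yields a binary octic in $z,w$. I expect the raw resultant to carry extraneous factors --- powers of $A$, $B$ and of the singular-hyperplane linear forms --- which must be divided out; the hypothesis $\Delta=0$ then enters to reduce the surviving coefficients to the compact shapes $a=-A^{2}B^{2}$, $b=4(2AD-CE)(2AE-CD)$ and $c=2(A^{2}B^{2}-2(E^{2}+D^{2})(4A^{2}+C^{2})+16ACDE)$. This is the one place where the Segre-cubic hypothesis is essential to obtaining the stated closed forms, consistent with the fact that off $S_{3}$ the surface is smooth and carries no nodes at all.

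The main obstacle is entirely this last bookkeeping: controlling which extraneous factors appear in the resultants and justifying their removal, that is, checking that saturation with respect to $A$, $B$ and the forms $q_{\pm C},\dots$ discards no actual component of the singular scheme, and confirming that the reduction modulo $\Delta$ lands on exactly the stated coefficients. Because these manipulations are large but mechanical, I would carry them out and certify them in Magma, and then close the argument with the degree count above: the three equations cut out a zero-dimensional scheme of length $16$, which must coincide with the full set of nodes of the Kummer surface, so no singular point is lost and none is spurious. This also recovers, in the reverse direction, the correspondence of Proposition~\ref{prop:Surface from sing pt}.
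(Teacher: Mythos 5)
Your proposal is correct in outline but proceeds in the opposite direction from the paper. You derive the three equations \emph{a priori}, by triangularising the Jacobian ideal $(F_{x},F_{y},F_{z},F_{w})$ through successive elimination of $x$ and $y$ and then cleaning up the resultants modulo $\Delta=0$. The paper instead takes the three equations as given and verifies them \emph{a posteriori}: setting $w=1$, it solves the palindromic octic for $z$ in nested square roots, then the second and third equations for $y$ and $x$, substitutes the resulting radical expressions into the formulas of Proposition \ref{prop:Surface from sing pt} (which recover $[A,B,C,D,E]$ from a prospective singular point), and checks that this returns the original point; the uniqueness statement in that proposition then forces $[x,y,z,1]$ to be a singular point of $X_{[A,B,C,D,E]}$, and the count $8\cdot2\cdot1=16$ finishes the argument. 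The paper's route reduces the whole proof to certifying a single polynomial identity and makes clear why Proposition \ref{prop:Surface from sing pt} is established first; your route is more self-contained but shoulders a heavier burden: you must control the extraneous factors in the resultants (note that your Cramer determinant $4zw(D^{2}-E^{2})$ is not a product of the singular-hyperplane forms, so its vanishing is not excluded by the hypothesis and the saturation step genuinely needs justification), and you must certify that the reduction modulo $\Delta$ lands on exactly the stated coefficients. Both arguments ultimately delegate the decisive computation to Magma, but the identity the paper checks is strictly simpler than the elimination you propose to check. Your closing degree count is the right device in either approach for seeing that no node is lost and no spurious solution appears.
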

\begin{proof}
Without loss of generality, we assume $w=1$. Then the first equation
can be considered as a symmetric quartic polynomial with the variable
$z^{2}$, and hence $z$ can be written as a radical function of $A,B,C,D,E$,
i.e,
\[
z=\pm\sqrt{\frac{u_{\pm}\pm\sqrt{u_{\pm}^{2}-4}}{2}},\mbox{ where }u_{\pm}=\frac{-b\pm\sqrt{b^{2}-4a(c-2a)}}{2a}.
\]
 Similarly, we can write $x$ and $y$ as radical functions of $A,B,C,D,E$.
Substituting the point $[x,y,z,1]$ (written in terms of $A,B,C,D,E$)
into the equations of Proposition \ref{prop:Surface from sing pt},
we get an equality. Since a point defines the Kummer surface uniquely,
we must have that the point $[x,y,z,1]$ is a singular point to $[A,B,C,D,E]$.
\end{proof}
We explain why we need the hypothesis in the two previous propositions,
namely taking a point in $\pp^{3}$ away from $\mathcal{L}$ and taking
a point in $\pp^{4}$ away from the singular hyperplanes. Fist we
note that the intersection of one of the singular hyperplanes with
the Segre cubic breaks down into $3$ planes. For example 
\begin{eqnarray*}
\{q_{+C}=0\}\cap\{\Delta=0\} & = & \{q_{+C}=0,q_{-C}=0,A=0\}\\
 & \cup & \{q_{+C}=0,p_{-0}=0,p_{+1}=0\}\\
 & \cup & \{q_{+C}=0,p_{+0}=0,p_{-1}=0\}.
\end{eqnarray*}
 One can check that we get $15$ planes this way, which we shall refer
to as the\emph{ $15$ Segre planes}. 

Suppose the surface $X_{p}$ is represented by a point $p$ lying
on one of the 15 Segre planes, that is it doesn't satisfy the hypothesis
of Proposition \ref{prop:sing pt from sufracce}. Then, one can calculate,
that $X_{p}$ does not have only $16$ singular points, but rather
two skew singular lines. Namely one of the $15$ pairs of lines in
$\mathcal{L}$. On the other hand, consider the surface $X_{p}$,
with $p\in\pp^{4}$, which has the singular point $q\in\mathcal{L}$.
By Proposition \ref{prop:Singular Conditions} we know that either
$p$ lies on the Segre cubic or on one of the $15$ singular hyperplanes.
If $p$ lies on the Segre cubic, then in fact $p$ lies on one of
the Segre planes. If $p$ lies on a singular hyperplane and not on
the Segre cubic, then $q$ lies on $3$ lines contained in $\mathcal{L}$. 

Hence we have a one to one correspondence between the $15$ pairs
of skew lines of $\mathcal{L}$ and the $15$ Segre planes. Table
\ref{tab:List-of-Invariant} shows which Segre plane corresponds to
which pair of lines.
\selectlanguage{english}%
\begin{defn}
Let $Y$ be a quartic surface in $\pp^{3}$. We say that a plane $T$
in $\pp^{3}$ is a \emph{trope} of $Y$ if $Y\cap T$ is an irreducible
conic counted with multiplicity two. \end{defn}
\begin{lem}
\label{lem:Trope iff sing}A quartic surface $Y\subset\pp^{3}$ which
has a trope $T$ is necessarily singular.
\end{lem}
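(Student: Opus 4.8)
The plan is to choose coordinates so that the trope $T$ becomes a coordinate plane, exploit the fact that the restriction of the defining quartic to $T$ is a perfect square, and then locate a singular point actually lying on the conic by a dimension/Bézout argument.

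First I would pick homogeneous coordinates $[x:y:z:w]$ on $\pp^{3}$ so that $T=\{w=0\}$, and let $F$ be the quartic form cutting out $Y$. The intersection $Y\cap T$ is the plane quartic inside $T\cong\pp^{2}$ defined by $F(x,y,z,0)$. By hypothesis this divisor equals $2C$ for an irreducible conic $C$, so the quartic form $F(x,y,z,0)$ must be a perfect square: after rescaling, $F(x,y,z,0)=G(x,y,z)^{2}$, where $G$ is the quadratic form defining $C$. Hence $F-G^{2}$ vanishes identically on $\{w=0\}$ and is therefore divisible by $w$, giving a decomposition $F=G^{2}+wH$ with $H$ a cubic form in $x,y,z,w$.

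Next I would differentiate $F$ and evaluate along $C$, i.e.\ at the points with $w=0$ and $G=0$. Each of the derivatives $\partial F/\partial x,\ \partial F/\partial y,\ \partial F/\partial z$ has the shape $2GG_{\bullet}+wH_{\bullet}$, which vanishes at every such point because both $G$ and $w$ are zero there; the remaining derivative $\partial F/\partial w=H+wH_{w}$ reduces to $H(x,y,z,0)$ on $C$. Thus a point of $C$ is a singular point of $Y$ exactly when the cubic $H(x,y,z,0)$ vanishes there (note that $F$ itself automatically vanishes on $C$).

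The final step, which I regard as the crux, is to produce such a point. Since $C=\{G=0\}$ is a conic and $\{H(x,y,z,0)=0\}$ is a cubic curve in the projective plane $T$, B\'ezout's theorem guarantees that they meet (in six points counted with multiplicity); and if $H(x,y,z,0)$ happens to vanish identically, then every point of $C$ already qualifies. In either case there is a point $P\in C$ at which $F$ and all four partial derivatives vanish, so $P$ is a singular point of $Y$ and the lemma follows. The one point requiring care is that this intersection argument is carried out over the algebraically closed base field $\overline{\qq}$, so that the required point genuinely exists.
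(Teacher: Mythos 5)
Your proof is correct and complete. Note that the paper actually states this lemma without giving any proof at all, so there is nothing to compare against; your argument (write $F=G^{2}+wH$ after putting $T=\{w=0\}$, observe that the three derivatives in the plane variables vanish automatically along $C$, and locate a common zero of the conic $G$ and the restricted cubic $H(x,y,z,0)$ in $T\cong\pp^{2}$) is the standard one and supplies exactly the missing justification. The only point worth tightening is the appeal to B\'ezout: if $H(x,y,z,0)$ is nonzero but divisible by $G$, the two curves share a component and the ``six points with multiplicity'' count does not literally apply; but in that case every point of $C$ is already a zero of $H(x,y,z,0)$, and in general any two curves in $\pp^{2}$ over $\overline{\qq}$ meet by the projective dimension theorem, so the conclusion stands in all cases.
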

We now turn to the theorem from Eklund, \cite[Thm 4.3]{eklund2010},
which started the idea of this paper.
\begin{thm}
\label{thm:320-Conics}A general K3 surface $X$ from the family $\mathcal{X}$
contains at least $320$ smooth conics. \end{thm}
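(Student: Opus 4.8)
The plan is to recognise the conics as the $(-2)$-curves of degree two in the N\'eron--Severi lattice of a general member and to count them there. If $X_p$ is smooth and $C\subset X_p$ is a smooth conic, then $C$ is a rational curve with $C\cdot H=2$ and $C^{2}=-2$, where $H=\mathcal{O}_{X_p}(1)$ satisfies $H^{2}=4$; consequently $N_{C/X_p}\cong\mathcal{O}_{\pp^{1}}(-2)$, so $H^{0}(N_{C/X_p})=0$ and $[C]$ is an isolated reduced point of the Hilbert scheme of conics on $X_p$. Writing $U\subset\pp^{4}$ for the smooth locus (the complement of the hypersurface of Proposition~\ref{prop:Singular Conditions}), the relative Hilbert scheme of conics $\mathcal{H}\to U$ is proper with finite fibres, hence finite; by generic flatness it is flat over a dense open of $U$, and because every closed fibre is reduced the number of conics is locally constant there. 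So it suffices to produce at least $320$ conics on a single general $X_p$, and this I would do by a lattice computation.

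To find the relevant classes I would specialise to the Kummer members of Proposition~\ref{prop:Surface from sing pt}: for general $p$ on the Segre cubic $S_3$ the surface $X_p$ is a Kummer quartic, its $16$ tropes cut out $16$ conics, and the minimal resolution carries the classical Kummer lattice spanned by the $16$ exceptional classes and the $16$ trope classes, together with $H$. The generic N\'eron--Severi lattice $\Lambda$ of the family is the part of this lattice that remains algebraic as $p$ leaves $S_3$ inside $\mathcal{X}$, equivalently the monodromy-invariant, $\Gamma$-invariant sublattice; determining $\Lambda$ and the location of $H$ in it is the first real step. A conic on a general $X_p$ is then exactly a class $\delta\in\Lambda$ with $\delta^{2}=-2$ and $\delta\cdot H=2$ that is represented by a smooth irreducible rational curve. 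Since $H$ is ample, Riemann--Roch on the K3 surface shows any such $\delta$ (having $\delta\cdot H>0$) is effective, so existence is reduced to a finite combinatorial count inside $\Lambda$.

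It remains to enumerate the solutions of $\delta^{2}=-2,\ \delta\cdot H=2$ in $\Lambda$ and to check that at least $320$ of them are honest conics. For each class I would rule out reducibility: a degree-two effective class can only break as a sum of two degree-one classes, i.e. a pair of lines, so one verifies that a general member of $\mathcal{X}$ contains no lines. This is consistent with the geometry set up above, since the $30$ lines of $\mathcal{L}$ occur only on the singular members lying on the Segre planes; hence on a general $X_p$ every effective degree-two class is an irreducible conic. Moreover, by Lemma~\ref{lem:Trope iff sing} a smooth $X_p$ has no tropes, so no conic appears with multiplicity two and each class yields a distinct reduced point. Granting the count $\geq 320$ in $\Lambda$, local constancy of the conic number over the flat locus then gives at least $320$ smooth conics on every general member.

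The main obstacle is the pair of computations feeding this argument: first, pinning down precisely which part of the Kummer lattice stays algebraic over $U$---a monodromy calculation, which is essentially the content of the paper's later sections---and, second, carrying out the finite but delicate enumeration of $(-2)$-classes of degree two, showing the total is at least $320$ and that the $\Gamma$-action identifies none of them. Everything else (rigidity, effectivity via Riemann--Roch, irreducibility from line-freeness, reducedness from Lemma~\ref{lem:Trope iff sing}, and the final semicontinuity step) is then routine.
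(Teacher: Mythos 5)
Your framework---rigidity of conics on a K3 ($N_{C/X_p}\cong\mathcal{O}_{\pp^{1}}(-2)$), properness and generic flatness of the relative Hilbert scheme, effectivity of degree-two $(-2)$-classes via Riemann--Roch, and irreducibility from the absence of lines---is sound, and it would correctly reduce the theorem to counting classes $\delta$ with $\delta^{2}=-2$, $\delta\cdot H=2$ in the generic N\'eron--Severi lattice. But as written the argument has a genuine gap: the existence of at least $320$ such classes is never established. You explicitly defer both the determination of which part of the Kummer lattice stays algebraic over the family and the enumeration of the degree-two $(-2)$-classes, and these are not routine follow-ups --- they are the entire content of the theorem. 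Moreover, the monodromy computation you propose to lean on for the first step is carried out in the paper only after this theorem, and it takes the $320$ conics and their explicit equations as input, so that route as stated is circular; you would instead need an independent computation of the generic Picard lattice of the family (itself a substantial result, due to Eklund) before the enumeration could even begin.

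The paper's proof sidesteps all of this with an elementary explicit construction. For each of the $10$ nodes $q_{i}$ of the Segre cubic, the line through $p$ and $q_{i}$ meets $S_{3}$ in a third point $p_{i}$ whose associated surface is a Kummer quartic, giving $\widetilde{X_{p}}=\alpha\bigl(\widetilde{Q_{i}}\bigr)^{2}+\alpha'\widetilde{X_{p_{i}}}$; substituting the trope decomposition $\widetilde{X_{p_{i}}}=\mu(Q')^{2}+\lambda\widetilde{T}$ exhibits $\widetilde{X_{p}}$, modulo $\widetilde{T}$, as a difference of two squares, so each of the $16$ trope planes $T$ cuts $X_{p}$ in two conics, and $10\cdot16\cdot2=320$. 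The only auxiliary inputs are that a general member contains no lines and has no trope (Lemma~\ref{lem:Trope iff sing}), which you also invoke. Your lattice-theoretic formalism would be a clean way to \emph{organise} these curves once they exist, but to prove existence you essentially have to run the paper's construction first, at which point the theorem is already proved.
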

\begin{proof}
The proof is adapted from \cite[Thm 4.3]{eklund2010}, which we reproduce
here as we will use some elements of the proof in the rest of this
paper. For this proof if $Y$ is a hypersurface, fix $\widetilde{Y}$
to be an equation defining $Y$.

Pick $p\in\pp^{4}$ general and let $q_{i}$ be a node of the Segre
cubic $S_{3}$ (in particular fix $i$). We have that the associated
surface to $q_{i}$ is $Q_{i}^{2}$, a quadric of multiplicity two.
The line through $p$ and $q_{i}$ intersects $S_{3}$ in exactly
one more point, call it $p_{i}$. Hence we have $\widetilde{X_{p}}=\alpha\left(\widetilde{Q_{i}}\right)^{2}+\alpha'\widetilde{X_{p_{i}}}$
for some $\alpha,\alpha'\in\overline{\qq}$. As $p$ is general by
Proposition \ref{prop:sing pt from sufracce}, we have that the associated
surface $X_{p_{i}}$ is Kummer. Pick a singular point on $X_{p_{i}}$
and consider its dual $T$. As $T$ is a trope of $X_{p_{i}}$ (see
\cite[pg 12]{eklund2010} for more details) we have that $\widetilde{X_{p_{i}}}=\mu(Q')^{2}+\lambda\widetilde{T}$,
for some $\mu\in\overline{\qq}$, a cubic equation $\lambda$ and
a quadratic equation $Q'$. Hence, as equations, 
\begin{eqnarray}
\widetilde{X_{p}} & = & \alpha\left(\widetilde{Q_{i}}\right)+\alpha'\mu\left(Q'\right)^{2}+\alpha'\lambda\widetilde{T}\nonumber \\
 & = & (\sqrt{\alpha}\widetilde{Q_{i}}+\sqrt{-\alpha'\mu}Q')(\sqrt{\alpha}\widetilde{Q_{i}}-\sqrt{-\alpha'\mu}Q')+\alpha'\lambda\widetilde{T}.\label{eq:of X_p}
\end{eqnarray}
So $X_{p}\cap T$ is the union of two conics. As the general member
of the family does not contain any lines (see \cite[Prop 2.3]{eklund2010}),
nor does it have a trope (Lemma \ref{lem:Trope iff sing}), we have
that the two conics are smooth and distinct.

Since a general Kummer surface of $\mathcal{X}$ is determined by
any of its tropes (i.e., by its singular points Proposition \ref{prop:Surface from sing pt}),
all the tropes defined by using the $10$ nodes $q$ of $S_{3}$ are
different. As two different planes cannot have a smooth conic in common,
we conclude that we have constructed $10\cdot16\cdot2=320$ smooth
conics on $X_{p}$.
\end{proof}
\selectlanguage{british}%

\section{The Galois Group\label{sec:The-Galois-Group}}

In this section we are going to shift away from working over $\overline{\qq}$
to working over number fields. Let $K$ be a number field, and let
$p$ be a general point in $\pp_{K}^{4}$. Then the associated K3
surface, $X_{p}$, has $320$ conics on it, so let $L$ be the smallest
number field over which those conics are defined. Note that it must
be an extension of $K$. We want to work out the Galois group of the
field of definition of the $320$ conics. That is, we are interested
in $\mathrm{Gal}(L/K)$. To do this we will first find $L$. For this,
by Theorem \ref{thm:320-Conics}, it is sufficient to work out, for
each conic, over \foreignlanguage{english}{what field $\alpha,\alpha',\mu$
and $Q'$ are defined over (since $Q_{i}$ is defined over $\qq$
for all $i$). }

\selectlanguage{english}%
Let $p=[A,B,C,D,E]\in\pp^{4}$, we have that $\alpha$ and $\alpha'$
depend only on $Q_{i}$ (or more specifically on the point $q_{i}$),
while $\mu$ and $Q'$ depend on the $Q_{i}$ and the trope $T$ (of
which there are $16$ choices once $Q_{i}$ is fixed). Let $\alpha_{i}$
and $\alpha_{i}'$ be associated to $Q_{i}$ and we first look at
$\alpha_{i},\alpha'_{i}$. Using the equations defining the line through
the point $p$ and the point $q_{i}$, and the cubic equation defining
$S_{3}$ we can find the point $p_{i}$. Hence we write $[A_{i},B_{i},C_{i},D_{i},E_{i}]=X_{p,i}$
in terms of $A,B,C,D\mbox{ and }E$. Since we know $X_{q_{i}}$, we
can use simple algebra to calculate $\alpha_{i}$ and $\alpha_{i}'$.
We find that $\alpha_{i}=\Delta\beta_{i}^{-1}$ and $\alpha'_{i}=\begin{cases}
\beta_{i}^{-1} & i\in[1,\dots,4]\\
(4\beta_{i})^{-1} & i\in[5,\dots,10]
\end{cases}$ where
\begin{flalign*}
\beta_{1}= & 12A^{2}+\frac{1}{4}B^{2}+4A(C+D-E)-(C^{2}+D^{2}+E^{2})+2(CD-CE-DE)\\
\beta_{2}= & 12A^{2}+\frac{1}{4}B^{2}+4A(C-D+E)-(C^{2}+D^{2}+E^{2})+2(-CD+CE-DE)\\
\beta_{3}= & 12A^{2}+\frac{1}{4}B^{2}+4A(-C+D+E)-(C^{2}+D^{2}+E^{2})+2(-CD-CE+DE)\\
\beta_{4}= & 12A^{2}+\frac{1}{4}B^{2}-4A(C+D+E)+(C^{2}+D^{2}+E^{2})+2(CD+CE+DE)
\end{flalign*}
 
\begin{gather*}
\beta_{5}=-(AB+2AC-DE)\qquad\qquad\qquad\beta_{6}=AB-2AC+DE\\
\beta_{7}=-(AB+2AD-CE)\qquad\qquad\qquad\beta_{8}=AB-2AD+CE\\
\beta_{9}=-(AB+2AE-CD)\qquad\qquad\qquad\beta_{10}=AB-2AE+CD
\end{gather*}
and $\Delta$ is defined by Equation (\ref{eq:Delta}). In particular,
letting $\widetilde{Y}$ be the equation defining the hypersurface
$Y$ as in Theorem \ref{thm:320-Conics}, $\widetilde{X_{p}}=\beta_{i}^{-1}(\Delta\left(\widetilde{Q_{i}}\right)^{2}+\widetilde{X_{p_{i}}})$,
with the $\frac{1}{4}$ factor absorbed in the equation defining $X_{p_{i}}$
when needed. Hence, using Equation (\ref{eq:of X_p}) and by rescaling
with $\beta$, we have, for a fixed $Q_{i}$ and $T$,
\[
X_{p}\cap T=\left\{ \left(\widetilde{Q_{i}}+\sqrt{-\frac{\mu_{i}}{\Delta}}Q'\right)\left(\widetilde{Q_{i}}-\sqrt{-\frac{\mu_{i}}{\Delta}}Q'\right)=0,\widetilde{X_{p}}=0\right\} 
\]
In particular, the field of definition of the pair of conics defined
by $T$ only depend on $\sqrt{-\frac{\mu_{i}}{\Delta}}$ and $Q'$.
So let us fix an $i$, and set $X_{p_{i}}=[A_{i},B_{i},C_{i},D_{i},E_{i}]$
and let us fix $T$ by choosing the singular point $[r_{3,i},r_{2,i},r_{1,i},1]$
on $X_{p_{i}}$. That is, $T$ is defined by $r_{3,i}x+r_{2,i}y+r_{1,i}z+w=0$.
If we use the equations in Proposition \ref{prop:Surface from sing pt}
to rewrite the equation defining $X_{p_{i}}$ in terms of $r_{3,i},r_{2,i}$
and $r_{1,i}$, then substituting $w=-(r_{3,i}x+r_{2,i}y+r_{1,i}z)$
into $X_{p_{i}}$ we find that $\left(Q'\right)^{2}=(a_{0}x^{2}+a_{1}y^{2}+a_{2}z^{2}+a_{3}xy+a_{4}xz+a_{4}yz)^{2}$
\label{pg:Q2}where 
\selectlanguage{british}%
\begin{itemize}
\item $a_{0}=(r_{2}r_{3}-r_{1})\cdot(r_{2}r_{3}+r_{1})\cdot(r_{1}r_{3}-r_{2})\cdot(r_{1}r_{3}+r_{2})$,
\item $a_{1}=(r_{2}r_{3}-r_{1})\cdot(r_{2}r_{3}+r_{1})\cdot(r_{1}r_{2}-r_{3})\cdot(r_{1}r_{2}+r_{3})$,
\item $a_{2}=(r_{1}r_{3}-r_{2})\cdot(r_{1}r_{3}+r_{2})\cdot(r_{1}r_{2}-r_{3})\cdot(r_{1}r_{2}+r_{3})$,
\item $a_{3}=r_{3}\cdot r_{2}\cdot(2r_{1}^{2}r_{2}^{2}r_{3}^{2}-r_{1}^{4}-r_{2}^{4}-r_{3}^{4}+1)$,
\item $a_{4}=r_{3}\cdot r_{1}\cdot(2r_{1}^{2}r_{2}^{2}r_{3}^{2}-r_{1}^{4}-r_{2}^{4}-r_{3}^{4}+1)$,
\item $a_{5}=r_{2}\cdot r_{1}\cdot(2r_{1}^{2}r_{2}^{2}r_{3}^{2}-r_{1}^{4}-r_{2}^{4}-r_{3}^{4}+1)$.
\end{itemize}
Now as each trope $T$, and hence each associated $Q'$, of $X_{p_{i}}$
are defined by $\Gamma$ acting on the point \foreignlanguage{english}{$[r_{3,i},r_{2,i},r_{1,i},1]$
we have that the $16$ conics $Q'$ associated to $X_{p_{i}}$ are
defined over }field $K(r_{1,i},r_{2,i})$ (recall that $r_{3,i}$
is a $K$-linear combination of $r_{1,i},r_{2,i}$, see Proposition
\ref{prop:sing pt from sufracce}). 

Next to work out $\mu_{i}$ (depending on the singular point \foreignlanguage{english}{$[r_{3,i},r_{2,i},r_{1,i},1]$}),
we use the fact that (as equations) $X_{p_{i}}=\mu\left(Q'\right)^{2}+\lambda T$
and that $Q'$ has no $w$ terms, to find that
\[
\mu_{i}=(A_{i}r_{1}^{4}+C_{i}r_{1}^{2}+A_{i})\cdot a_{2}^{-2}.
\]
Again we see that the action of $\Gamma$ on \foreignlanguage{english}{$[r_{3,i},r_{2,i},r_{1,i},1]$}
will give us the other $15$ $\mu$'s. In particular, as the 15 other
singular points have $z$-coordinates $\pm r_{1,i},\pm\frac{1}{r_{1,i}},\pm\frac{r_{2,i}}{r_{3,i}},\pm\frac{r_{3,i}}{r_{2,i}}$,
there are $3$ other $\mu$, namely $\frac{1}{r_{1,i}^{2}}\mu$, $\overline{\mu}=(A_{i}\frac{r_{2,i}^{4}}{r_{3,i}^{4}}+C_{i}\frac{r_{2,i}^{2}}{r_{3,i}^{2}}+A_{i})\cdot\overline{a}_{2}^{-2}$
and $\frac{r_{3,i}^{2}}{r_{2,i}^{2}}\overline{\mu}$ (where $\overline{a_{2}}$
can be calculated, but will not be needed). Putting all of this together
we have proven the following.
\begin{prop}
\label{thm:Field of definition of 32 conics}Let $p=[A,B,C,D,E]\in\pp^{4}$
and fix $i\in[1,\dots,10]$ . Let $[A_{i},B_{i},C_{i},D_{i},E_{i}]=p_{i}\in\pp^{4}$
be the third point of intersection between the Segre cubic $S_{3}$,
and the line joining $q_{i}$ and $p$. Then the $32$ conics lying
on $X_{p}$ and associated to the point $q_{i}$ (as per the construction
in Theorem \ref{thm:320-Conics}) are defined over
\begin{equation}
K_{i}=K(r_{1,i},r_{2,i},r_{\mu,i},\overline{r_{\mu,i}})\label{eq:K_i}
\end{equation}
 where{\small{
\begin{eqnarray}
r_{1,i} & \mbox{is a root of} & ax^{8}+bx^{6}+cx^{4}+bx^{2}+a\label{eq:first_root}\\
r_{2,i} & \mbox{is a root of} & d(E_{i}r_{1,i}^{2}-D_{i})x^{2}+A_{i}\left(d\left(r_{1,i}^{4}-1\right)+e\left(r_{1,i}^{4}+1\right)\right)+C_{i}er_{1,i}^{2}\label{eq:second_root}\\
r_{\mu,i} & \mbox{is a root of} & x^{2}+\frac{1}{\Delta}(A_{i}r_{1,i}^{4}+C_{i}r_{1,i}^{2}+A_{i})\\
\overline{r_{\mu,i}} & \mbox{is a root of} & x^{2}+\frac{1}{\Delta}(A_{i}\overline{r}_{1,i}^{4}+C_{i}\overline{r}_{1,i}^{2}+A_{i})\label{eq:last_root}
\end{eqnarray}
}}with $\overline{r}_{1,i}=\frac{r_{2,i}}{r_{3,i}}$ (which can be
expressed in terms of $r_{1,i}$) and 
\[
a=-A_{i}^{2}B_{i}^{2}
\]
\[
b=4(2A_{i}D_{i}-C_{i}E_{i})(2A_{i}E_{i}-C_{i}D_{i})
\]
\[
c=2(A_{i}^{2}B_{i}^{2}-2(E_{i}^{2}+D_{i}^{2})(4A_{i}^{2}+C_{i}^{2})+16A_{i}C_{i}D_{i}E_{i})
\]
\begin{flalign*}
d & =4A_{i}^{2}-C_{i}^{2}\qquad\qquad e=E_{i}^{2}-D_{i}^{2}
\end{flalign*}

\end{prop}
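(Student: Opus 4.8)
The plan is to unwind the construction in the proof of Theorem~\ref{thm:320-Conics} and keep track of the field generated by each piece of data appearing there. After the rescaling by $\beta_i$, the two conics cut out on $X_p$ by a single trope $T$ are the two factors of
\[
\left(\widetilde{Q_i}+\sqrt{-\tfrac{\mu_i}{\Delta}}\,Q'\right)\left(\widetilde{Q_i}-\sqrt{-\tfrac{\mu_i}{\Delta}}\,Q'\right)
\]
restricted to $\widetilde{X_p}=0$. Since $\widetilde{Q_i}\in\qq[x,y,z,w]$ and $\Delta\in K$ (it is the polynomial \eqref{eq:Delta} in the $K$-rational coordinates of $p$), and since $q_i$ is a $\qq$-rational node of $S_3$ while $p\in\pp_K^4$, the residual point $p_i$ is $K$-rational; thus $A_i,\dots,E_i\in K$ and the only data that can enlarge the field beyond $K$ are the plane $T$, the form $Q'$ and the scalar $\sqrt{-\mu_i/\Delta}$. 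It therefore suffices to compute the field these generate over $K$, as $T$ runs over the $16$ tropes of the Kummer surface $X_{p_i}$.

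First I would determine the field of $T$ and $Q'$. Each trope is dual to a singular point of $X_{p_i}$; fixing the point $[r_{3,i}:r_{2,i}:r_{1,i}:1]$ gives $T=\{r_{3,i}x+r_{2,i}y+r_{1,i}z+w=0\}$. Applying Proposition~\ref{prop:sing pt from sufracce} to $X_{p_i}$ (i.e.\ with $(A,\dots,E)$ replaced by $(A_i,\dots,E_i)$ and $w=1$): the first equation involves only $z,w$ and produces the octic \eqref{eq:first_root} satisfied by $r_{1,i}$; the second is quadratic in $y$ and yields \eqref{eq:second_root} for $r_{2,i}$ over $K(r_{1,i})$; the third solves for $x=r_{3,i}$ as a rational function of $r_{1,i},r_{2,i}$ over $K$, so $r_{3,i}\in K(r_{1,i},r_{2,i})$. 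As the coefficients $a_0,\dots,a_5$ of $Q'$ are polynomials in $r_{1,i},r_{2,i},r_{3,i}$, both $T$ and $Q'$ are defined over $K(r_{1,i},r_{2,i})$. The remaining $15$ tropes form the $\Gamma$-orbit of $T$, and since $\Gamma$ acts on $\pp^3$ only by permuting and negating coordinates, all $16$ singular points---and hence all $16$ forms $Q'$---remain defined over $K(r_{1,i},r_{2,i})$.

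It remains to handle $\sqrt{-\mu_i/\Delta}$. Imposing $X_{p_i}=\mu(Q')^2+\lambda T$ and using that $Q'$ has no $w$-term, one extracts $\mu_i=(A_ir_{1,i}^4+C_ir_{1,i}^2+A_i)\,a_2^{-2}$, whence $\sqrt{-\mu_i/\Delta}=a_2^{-1}r_{\mu,i}$ with $r_{\mu,i}$ a root of $x^2+\tfrac1\Delta(A_ir_{1,i}^4+C_ir_{1,i}^2+A_i)$; as $a_2\in K(r_{1,i},r_{2,i})$ this lies in $K(r_{1,i},r_{2,i},r_{\mu,i})$. Running over the $\Gamma$-orbit, the $z$-coordinates take the eight values $\pm r_{1,i},\pm r_{1,i}^{-1},\pm\overline{r}_{1,i},\pm\overline{r}_{1,i}^{-1}$ with $\overline{r}_{1,i}=r_{2,i}/r_{3,i}$, so $\mu$ realises only the four values $\mu_i,\ r_{1,i}^{-2}\mu_i,\ \overline{\mu}_i,\ r_{3,i}^2r_{2,i}^{-2}\overline{\mu}_i$. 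Their square roots differ from $r_{\mu,i}$ and $\overline{r_{\mu,i}}$ only by the factors $r_{1,i}^{-1},1,r_{3,i}r_{2,i}^{-1}\in K(r_{1,i},r_{2,i})$, so adjoining $r_{\mu,i}$ and $\overline{r_{\mu,i}}$ absorbs every square root that occurs. Combining with the previous paragraph, all $32$ conics attached to $q_i$ are defined over $K_i=K(r_{1,i},r_{2,i},r_{\mu,i},\overline{r_{\mu,i}})$.

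The skeleton above is short, but the substance---and the main obstacle---is the explicit algebra compressed into ``one extracts''. I would need to verify, after substituting the rational parametrisation of $p_i$, that the chosen singular point's coordinates satisfy \emph{exactly} the octic and quadratic with the stated $a,b,c,d,e$, to compute $Q'$ and the scalar $\mu_i$ in closed form, and---most delicately---to check that the sixteen values of $\mu$ collapse to precisely the four listed multiples of $\mu_i$ and $\overline{\mu}_i$. This last point is what guarantees that the two square roots $r_{\mu,i},\overline{r_{\mu,i}}$ suffice rather than more, and it is not transparent by hand; this is exactly where I would rely on the Magma verification that the paper's opening note anticipates.
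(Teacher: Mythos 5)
Your proposal is correct and follows essentially the same route as the paper: unwinding the decomposition from Theorem~\ref{thm:320-Conics}, applying Proposition~\ref{prop:sing pt from sufracce} to $X_{p_i}$ to get the octic for $r_{1,i}$, the quadratic for $r_{2,i}$ and the rational expression for $r_{3,i}$, and then observing that the $\Gamma$-orbit forces the sixteen values of $\mu$ to collapse to four that differ from $\mu_i,\overline{\mu}_i$ by squares in $K(r_{1,i},r_{2,i})$. Your closing caveat about the explicit algebra being machine-verified matches the paper's own reliance on Magma for these computations.
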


\begin{prop}
\label{prop:C_2^5}Let $X_{p}$ be a K3 surface in the family $\mathcal{X}$,
for each $i\in[1,\dots,10]$ define $K_{i}$ as in Proposition \ref{thm:Field of definition of 32 conics}.
Then $\mathrm{Gal}(K_{i}/K)\cong C_{2}^{n}$ for some $0\leq n\leq5$
(that is $n$ copies of $\zz/2\zz$). In particular, $K_{i}=K(r_{1,i},r_{2,i},r_{\mu,i})$
(i.e., adjoining $\overline{r_{\mu,i}}$ is redundant).\end{prop}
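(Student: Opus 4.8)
The plan is to prove the stronger statement that $K_i$ is a \emph{multiquadratic} extension of $K$, i.e.\ that $K_i=K(\sqrt{d_1},\dots,\sqrt{d_m})$ for finitely many $d_j\in K^\times$. Every such field is Galois over $K$ with $\Gal(K_i/K)$ an elementary abelian $2$-group, so this gives $\Gal(K_i/K)\cong C_2^n$ at once; the bound $n\le5$ and the redundancy of $\overline{r_{\mu,i}}$ then follow by tracking how each of the four generators in (\ref{eq:K_i}) enlarges the field.

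The heart of the matter is $r:=r_{1,i}$. The octic (\ref{eq:first_root}) is even and palindromic, so its roots are $\pm r,\pm r^{-1},\pm\overline r_{1,i},\pm\overline r_{1,i}^{-1}$, and $r^2$ is a root of the reciprocal quartic $ay^4+by^3+cy^2+by+a$; the substitution $v=y+y^{-1}$ reduces this to $av^2+bv+(c-2a)=0$, with roots $u_\pm=(-b\pm\sqrt{\delta_1})/(2a)$ where $\delta_1=b^2-4a(c-2a)$, satisfying $r^2+r^{-2}=u_+$ and $\overline r_{1,i}^2+\overline r_{1,i}^{-2}=u_-$. Setting $g_1=r+r^{-1}$, $g_2=r-r^{-1}$, $g_3=\overline r_{1,i}+\overline r_{1,i}^{-1}$, $g_4=\overline r_{1,i}-\overline r_{1,i}^{-1}$, one has $g_1^2=u_++2$, $g_2^2=u_+-2$, $g_3^2=u_-+2$, $g_4^2=u_--2$ in $K(\sqrt{\delta_1})$ and $K(r)=K(g_1,g_2)$. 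Because $u_+,u_-$ are $K$-conjugate, $g_1g_3$ and $g_2g_4$ are square roots of the norms $N_{K(\sqrt{\delta_1})/K}(u_+\pm2)=(u_+\pm2)(u_-\pm2)=(c+2a\mp2b)/a$. The decisive computation is that these norms are \emph{squares in $K$}: with the explicit $a,b,c$ of Proposition \ref{thm:Field of definition of 32 conics} one finds $c+2a-2b=-4(D_i+E_i)^2(2A_i-C_i)^2$, $c+2a+2b=-4(D_i-E_i)^2(2A_i+C_i)^2$ and $a=-A_i^2B_i^2$, so that $N_{K(\sqrt{\delta_1})/K}(u_+\pm2)=(2(D_i\pm E_i)(2A_i\mp C_i)/(A_iB_i))^2$. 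Hence $g_1g_3,g_2g_4\in K$, giving $g_3=g_1g_3/g_1$ and $g_4=g_2g_4/g_2$ in $K(r)$; in particular $\overline r_{1,i}\in K(r_{1,i})$, the octic splits already in $K(r_{1,i})$, and from $g_1^2+g_3^2=-b/a+4\in K$ together with $g_1g_3\in K$ one gets $(g_1+g_3)^2,(g_2+g_4)^2\in K$ and $K(r_{1,i})=K(\sqrt{\delta_1},\,g_1+g_3,\,g_2+g_4)$, multiquadratic of rank at most $3$.

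For $r_{2,i}$ I would use the coordinate symmetry of the family $\mathcal X$: under the action of $\Omega$ the $y$-coordinate of a singular point plays the role of the $z$-coordinate after a permutation of $(C_i,D_i,E_i)$, so $r_{2,i}$ satisfies an octic of the same shape as (\ref{eq:first_root}) and the computation above (with $C_i,D_i,E_i$ permuted) shows $K(r_{2,i})/K$ is multiquadratic. By (\ref{eq:second_root}) we have $r_{2,i}^2\in K(r_{1,i})$, so $[K(r_{1,i},r_{2,i}):K(r_{1,i})]\le2$ and $K(r_{1,i},r_{2,i})=K(r_{1,i})\cdot K(r_{2,i})$ is multiquadratic of rank at most $4$. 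Finally $r_{\mu,i}^2=-\Delta^{-1}(A_ir_{1,i}^4+C_ir_{1,i}^2+A_i)\in K(r_{1,i})$, so adjoining $r_{\mu,i}$ at most doubles the degree; verifying that its square class is represented by an element of $K^\times$ keeps the extension multiquadratic and gives rank at most $5$, that is $\Gal(K_i/K)\cong C_2^n$ with $n\le5$.

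For the redundancy of $\overline{r_{\mu,i}}$ it suffices to show $r_{\mu,i}^2\,\overline{r_{\mu,i}}^2$ is a square in $K(r_{1,i},r_{2,i})$, for then $\overline{r_{\mu,i}}=\sqrt{r_{\mu,i}^2\overline{r_{\mu,i}}^2}/r_{\mu,i}$ lies in $K(r_{1,i},r_{2,i},r_{\mu,i})$. Writing $P(t)=A_it^2+C_it+A_i$ we have $r_{\mu,i}^2\overline{r_{\mu,i}}^2=\Delta^{-2}P(r_{1,i}^2)P(\overline r_{1,i}^2)$, with $\overline r_{1,i}\in K(r_{1,i})$ by the previous step. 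The identity $P(t^{-1})=t^{-2}P(t)$ and the fact that $r_{1,i}^{\pm2},\overline r_{1,i}^{\pm2}$ are exactly the four roots of the reciprocal quartic give $P(r_{1,i}^2)P(\overline r_{1,i}^2)=(r_{1,i}\overline r_{1,i})^2\,\xi$, where $\xi\in K(r_{1,i})$ satisfies $\xi^2=\prod_{\rho}P(\rho)\in K$ (a symmetric function of the roots); thus $r_{\mu,i}^2\overline{r_{\mu,i}}^2=(r_{1,i}\overline r_{1,i}/\Delta)^2\,\xi$ and it remains only to check that $\xi$ is a square in $K(r_{1,i},r_{2,i})$, which I would confirm from the explicit formulae. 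I expect the real obstacle to be the norm computation of the second paragraph: for a generic palindromic octic the nested radical defining $r_{1,i}$ generates a cyclic $C_4$ (or a dihedral) extension, and it is precisely the special factorisations of $c+2a\mp2b$ and of $a$ — a genuine feature of these Kummer configurations, which I would confirm in Magma — that make $N_{K(\sqrt{\delta_1})/K}(u_+\pm2)$ into squares and collapse the whole tower to an elementary abelian group.
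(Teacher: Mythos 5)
Your strategy is sound and genuinely different from the paper's. The paper proves this proposition by the resolvent method: for each of the octics satisfied by $r_{1,i}$, $r_{2,i}$ and $r_{\mu,i}$ it orders the roots explicitly, evaluates the $C_{2}^{3}$-invariant $x_{1}x_{3}+x_{2}x_{4}+x_{5}x_{7}+x_{6}x_{8}$ at them, observes that the value is a simple $K$-rational root of the resolvent, deduces that each splitting field has group $C_{2}^{3}$, and then assembles $\mathrm{Gal}(K_{i}/K)\cong C_{2}^{5}$ from degree counts and commutation checks; the multiquadratic presentation only appears afterwards, via Lemma \ref{lem:Galois group of C_2^n}. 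You instead exhibit the multiquadratic generators directly, and your decisive computation is correct: with the stated $a,b,c$ one does have $c+2a-2b=-4(D_{i}+E_{i})^{2}(2A_{i}-C_{i})^{2}$ and $c+2a+2b=-4(D_{i}-E_{i})^{2}(2A_{i}+C_{i})^{2}$, so $(u_{+}\pm2)(u_{-}\pm2)=\bigl(2(D_{i}\pm E_{i})(2A_{i}\mp C_{i})/(A_{i}B_{i})\bigr)^{2}$ is a square in $K$, whence $g_{1}g_{3},g_{2}g_{4}\in K$, $\overline{r}_{1,i}\in K(r_{1,i})$, and $K(r_{1,i})=K(\sqrt{\delta_{1}},g_{1}+g_{3},g_{2}+g_{4})$ is multiquadratic (generically $g_{1},g_{2},g_{1}+g_{3},g_{2}+g_{4}\neq0$, so the divisions are legitimate). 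This buys transparency -- it produces the generators $\sqrt{\Delta_{j}}$ that the paper only extracts later, and it isolates exactly why a generic palindromic octic would give a $C_{4}$ or dihedral extension while these do not. The cost is that the two verifications you defer at the $r_{\mu,i}$ stage are not cosmetic: to keep the tower elementary abelian you must show $-\Delta^{-1}(A_{i}r_{1,i}^{4}+C_{i}r_{1,i}^{2}+A_{i})$ lies in $K^{\times}$ times a square of $K(r_{1,i},r_{2,i})^{\times}$, and the redundancy of $\overline{r_{\mu,i}}$ needs your $\xi$ to be a square there; without these one cannot exclude an element of order $4$ in $\mathrm{Gal}(K_{i}/K)$. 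Both checks do hold -- they amount to the identity placing $r_{\mu,i}$ in $K\bigl(\sqrt{-\Delta q_{-E}p_{+1}p_{-2}},\dots\bigr)$ that underlies the explicit form of $K_{1}$ given after Lemma \ref{lem:Galois group of C_2^n} -- and they are norm factorisations of the same type as the one you carried out, so completing them in Magma would make your argument a full, and arguably more self-contained, alternative proof.
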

\selectlanguage{english}%
\begin{proof}
We are going to show that if the polynomials (\ref{eq:first_root})
to (\ref{eq:last_root}) are irreducible then $\mbox{Gal}(K_{i}/K)\cong C_{2}^{5}$.
If any of the polynomials are not irreducible, then $\mbox{Gal}(K_{i}/K)$
is a subgroup of $C_{2}^{5}$, and hence must be $C_{2}^{n}$ for
some $0\leq n\leq5$. 

To do so we will use the resolvent method. Consider the group 
\[
\left\langle (12)(34)(56)(78),(13)(24)(57)(68),(15)(37)(26)(48)\right\rangle \leq S_{8}.
\]
Note that this is the group of translations of $(\zz/2\zz)^{3}$ (label
the eight vertices of a fundamental cube $1$ to $8$), hence it is
$C_{2}^{3}$. Let $x_{1},\dots,x_{8}$ be indeterminate variables,
then $S_{8}$ acts on them by $x_{i}\mapsto x_{\sigma(i)}$. Note
that the monomial $x_{1}x_{3}+x_{2}x_{4}+x_{5}x_{7}+x_{6}x_{8}$ is
$C_{2}^{3}$-invariant, so we can construct the resolvent polynomial
$R_{C_{2}^{3}}=\prod_{j=1}^{g}(X-P_{j})$ where $P_{j}$ are the elements
in the $S_{8}$-orbit of $x_{1}x_{3}+x_{2}x_{4}+x_{5}x_{7}+x_{6}x_{8}$.

We first consider the Galois group of $K(r_{1,i})$ over $K$, call
it $G$. As polynomial (\ref{eq:first_root}) has as roots the eight
different $z$ coordinates of the $16$ singular points, we have that
the minimal polynomial of $r_{1,i}$ factors as 
\[
(x-r_{1,i})(x+r_{1,i})\left(x-\frac{1}{r_{1,i}}\right)\left(x+\frac{1}{r_{1,i}}\right)(x-\overline{r}_{1,i})(x+\overline{r}_{1,i})\left(x-\frac{1}{\overline{r}_{1,i}}\right)\left(x+\frac{1}{\overline{r}_{1,i}}\right).
\]
If we substitute the $x_{j}$ with the $j$th root of the minimal
polynomial of $r_{1,i}$ (as ordered above), we find that
\begin{eqnarray*}
x_{1}x_{3}+x_{2}x_{4}+x_{5}x_{7}+x_{6}x_{8} & = & 4.
\end{eqnarray*}
 Hence in this case $R_{C_{2}^{3}}$ has a $K$-rational non-repeated
root, so $G\subseteq C_{2}^{3}$. But since the minimal polynomial
of $r_{1,i}$ is already of degree $8$, we must have $G\cong C_{2}^{3}$.
In fact $G$ is generated by $r_{1,i}\mapsto-r_{1,i}$, $r_{1,i}\mapsto\frac{1}{r_{1,i}}$
and $r_{1,i}\mapsto\overline{r}_{1,i}$, denote them by $\sigma_{2},\sigma_{3}$
and $\sigma_{4}$ respectively.

Next, we consider the Galois group of $K(r_{1,i},r_{2,i})$ over $K$.
We have that the minimal polynomial of $r_{2,i}$ is of degree $8$
(either through direct calculation, or the fact that $r_{2,i}$ solves
a quadratic in $r_{1,i}^{2}$ which itself solves a quartic). We can
find all the conjugates of $r_{2,i}$, by noting that if we let $\sigma_{2},\sigma_{3},\sigma_{4}$
act on equation (\ref{eq:second_root}), we get with $\pm r_{2,i}$
a total of $8$ conjugates. Furthermore, we know that $\pm r_{2,i}$
corresponds to the $y$-coordinate of the singular points which have
$z$-coordinate $r_{1,i}$. Similarly, $\sigma_{3}(\pm r_{2,i})$
corresponds to the $y$-coordinate of the singular points which have
$z$-coordinate $\frac{1}{r_{1,i}}$. Hence we know that the minimal
polynomial of $r_{2,i}$ factors as
\[
(x-r_{2,i})(x+r_{2,i})\left(x-\frac{1}{r_{2,i}}\right)\left(x+\frac{1}{r_{2,i}}\right)(x-\overline{r}_{2,i})(x+\overline{r}_{2,i})\left(x-\frac{1}{\overline{r}_{2,i}}\right)\left(x+\frac{1}{\overline{r}_{2,i}}\right),
\]
where $\overline{r}_{2,i}=\frac{r_{1,i}}{\overline{r}_{1,i}}r_{2,i}$.
As above, we can see that the Galois group of $K(r_{2,i})$ over $K$
is $C_{2}^{3}$, and in particular, we now know that the $K(r_{1,i},r_{2,i})/K$
is Galois. After having made some choice of sign on $\sigma_{i}(r_{2,i})$
for $2\leq i\leq4$, it is not hard to see that in fact $\mbox{Gal}(K(r_{1,i}r_{2,i})/K)\cong C_{2}^{4}$
generated by $\sigma_{1},\sigma_{2},\sigma_{3}$ and $\sigma_{4}$,
where $\sigma_{1}(r_{1,i})=r_{1,i}$ and $\sigma_{1}(r_{2,i})=-r_{2,i}$. 

Finally we look at $K(r_{\mu,i},\overline{r}_{\mu,i})$, and first
note that $r_{\mu,i}$ and $\overline{r}_{\mu,i}$ have the same minimal
polynomial over $K$. In fact, we have that the minimal polynomial
of $r_{\mu,i}$ factors as{\small{
\[
\left(x-r_{\mu,i}\right)\left(x+r_{\mu,i}\right)\left(x-\frac{r_{\mu,i}}{r_{1,i}^{2}}\right)\left(x+\frac{r_{\mu,i}}{r_{1,i}^{2}}\right)\left(x-\overline{r}_{\mu,i}\right)\left(x+\overline{r}_{\mu,i}\right)\left(x-\frac{\overline{r}_{\mu,i}}{\overline{r}_{1,i}^{2}}\right)\left(x+\frac{\overline{r}_{\mu,i}}{\overline{r}_{1,i}^{2}}\right).
\]
}}In this case, if we substitute the $x_{j}$ with the $j$th root
of the minimal polynomial of $r_{\mu,i}$ (as ordered above), we find
that 
\begin{eqnarray*}
x_{1}x_{3}+x_{2}x_{4}+x_{5}x_{7}+x_{6}x_{8} & = & 2\left(\frac{r_{\mu,i}^{2}}{r_{1,i}^{2}}+\frac{\overline{r}_{\mu,i}^{2}}{\overline{r}_{1,i}^{2}}\right)\\
 & = & -\frac{2}{\Delta}\left(2C_{1}+A_{1}\left(r_{1,i}^{2}+\frac{1}{r_{1,i}^{2}}+\overline{r}_{1,i}^{2}+\frac{1}{\overline{r}_{1,i}^{2}}\right)\right).
\end{eqnarray*}
Since $r_{1,i}^{2}$ solves a quartic polynomial whose other roots
are $\frac{1}{r_{1,i}^{2}},\overline{r}_{1,i}^{2},\frac{1}{\overline{r}_{1,i}^{2}}$,
we have that the above expression is in $K$. So $R_{H'}$ has a $K$-rational
non-repeated root, hence $H\subseteq H'\cong C_{2}^{3}$. But since
the minimal polynomial of $r_{\mu,i}$ is already of degree $8$,
we must have $H\cong C_{2}^{3}$ and $K(r_{\mu,i},\overline{r}_{\mu,i})\cong K(r_{\mu,i})$.

Hence since $[K_{i}:K]=2\cdot2\cdot8=32$, so we are looking for a
group of order $32$, which has $C_{2}^{4}$ as a subgroup. Note that
the element fixing $r_{1,i},r_{2,i}$ and sending $r_{\mu,i}\mapsto\frac{1}{r_{\mu,i}}$,
call it $\sigma_{5}$, has order $2$, but is not in the subgroup
$\left\langle \sigma_{1},\sigma_{2},\sigma_{3},\sigma_{4}\right\rangle \cong C_{2}^{4}$
(again, after having made some choice of sign on $\sigma_{i}(r_{\mu,i})$).
Furthermore, one can check that $\sigma_{5}$ commutes with $\sigma_{i}$
for $1\leq i\leq4$ (after having extended $\sigma_{i}$ properly
on $K_{i}$). Hence we have that $\mbox{Gal}(K_{i}/K)\cong C_{2}^{5}$. 
\end{proof}
\selectlanguage{british}%
The following lemma will allow us to find another way of expressing
$K_{i}$, which will help us finding $L$. While this lemma is quite
standard, the proof has been included as it details how one can construct
the field isomorphic of $K_{i}$.
\selectlanguage{english}%
\begin{lem}
\label{lem:Galois group of C_2^n}If $\mathrm{Gal}(L/K)\cong C_{2}^{n}$
for some $n$, then there exists $\Delta_{1},\dots,\Delta_{n}\in K$
whose image are linearly independent in the $\ff_{2}$-vector space
$K^{*}/(K^{*})^{2}$, such that $L\cong K\left(\sqrt{\Delta_{1}},\dots,\sqrt{\Delta_{n}}\right)$.\end{lem}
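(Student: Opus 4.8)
The plan is to give a constructive Kummer-theoretic argument that exhibits the $\Delta_j$ explicitly. Throughout I use that $K$ has characteristic $0$ (it is a number field), so $2$ is invertible and $-1\neq 1$; this is exactly what makes quadratic subextensions arise from square roots. Write $G=\Gal(L/K)\cong C_2^n$ and fix an $\ff_2$-basis $\sigma_1,\dots,\sigma_n$ of $G$ (equivalently, a set of generators). The key observation is that to this basis there correspond $n$ complementary hyperplanes $H_j=\langle \sigma_i:i\neq j\rangle$, each of index $2$ in $G$, with $\bigcap_{j=1}^n H_j=\{\id\}$; by the Galois correspondence each fixed field $M_j:=L^{H_j}$ is a quadratic extension of $K$, and their compositum is all of $L$.

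First I would record the degree-$2$ case: if $[M:K]=2$ with nontrivial automorphism $\tau$, then for any $\theta\in M\setminus K$ the element $\alpha=\theta-\tau(\theta)$ satisfies $\tau(\alpha)=-\alpha$, hence $\tau(\alpha^2)=\alpha^2$ and $\Delta:=\alpha^2\in K$; since $\alpha\neq 0$ and $\tau(\alpha)=-\alpha\neq\alpha$ we get $\alpha\notin K$, so $M=K(\sqrt{\Delta})$ with $\Delta\notin (K^*)^2$. Applying this to each $M_j$, with $\tau$ the automorphism of $M_j/K$ induced by $\sigma_j$, produces $\Delta_1,\dots,\Delta_n\in K^*$ with $\sqrt{\Delta_j}\in M_j\subseteq L$. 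By construction $\sigma_i\in H_j$ fixes $M_j$ for $i\neq j$, so $\sigma_i$ fixes the chosen square root $\sqrt{\Delta_j}$ for $i\neq j$, while $\sigma_j(\sqrt{\Delta_j})=-\sqrt{\Delta_j}$.

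Next I would verify that the images of $\Delta_1,\dots,\Delta_n$ are linearly independent in $K^*/(K^*)^2$. Suppose $\prod_{j\in S}\Delta_j\in (K^*)^2$ for some nonempty $S\subseteq\{1,\dots,n\}$, and set $\beta:=\prod_{j\in S}\sqrt{\Delta_j}$; then $\beta^2\in(K^*)^2$ forces $\beta\in K$. But choosing any $k\in S$, the displayed action gives $\sigma_k(\beta)=-\beta$, whence $\beta=0$ since $\mathrm{char}\,K\neq 2$, a contradiction. Thus no nontrivial product of the $\Delta_j$ is a square, which is precisely $\ff_2$-linear independence of their square classes.

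Finally I would conclude by comparing degrees: linear independence of the square classes gives $[K(\sqrt{\Delta_1},\dots,\sqrt{\Delta_n}):K]=2^n$, while $K(\sqrt{\Delta_1},\dots,\sqrt{\Delta_n})\subseteq L$ and $[L:K]=|G|=2^n$, so the two fields coincide and $L\cong K(\sqrt{\Delta_1},\dots,\sqrt{\Delta_n})$. The main obstacle is the linear-independence step: one must be sure that the chosen quadratic subextensions are genuinely independent and jointly generate $L$ rather than a proper subfield. The clean way to force this is exactly the choice of the $H_j$ as the complementary hyperplanes of a basis of $G$, since then they meet trivially and the eigenvector computation $\sigma_k(\beta)=-\beta$ rules out any hidden relation among the $\Delta_j$. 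Everything else is routine Galois correspondence together with the standing characteristic-$0$ hypothesis.
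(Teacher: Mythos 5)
Your proof is correct and follows essentially the same route as the paper's: both take the fixed fields of the index-two subgroups $\left\langle \sigma_{i}:i\neq j\right\rangle $ to obtain the quadratic extensions $K(\sqrt{\Delta_{j}})$, verify that the square classes are $\ff_{2}$-independent via the Galois action on products of the $\sqrt{\Delta_{j}}$, and conclude by a degree count. Your independence step (the direct computation $\sigma_{k}(\beta)=-\beta$ for a hypothetical $\beta=\prod_{j\in S}\sqrt{\Delta_{j}}\in K$) is a cleaner packaging of the contradiction argument the paper runs, and your explicit construction of $\Delta_{j}$ from $\theta-\tau(\theta)$ merely fills in a step the paper treats as standard.
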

\begin{proof}
Let $\mbox{Gal}(L/K)=\left\langle \sigma_{1},\dots,\sigma_{n}|\sigma_{i}^{2}=(\sigma_{i}\sigma_{j})^{2}=1\right\rangle \cong C_{2}^{n}$
and let 
\[
\widetilde{\sigma_{i}}=\left\langle \sigma_{1},\dots,\sigma_{i-1},\sigma_{i+1},\dots,\sigma_{n}\right\rangle \cong C_{2}^{n-1}
\]
 (for each $i\in\{1,\dots,n\}$). Since, for each $i$, we have that
$[\mbox{Gal}(L/K):\widetilde{\sigma_{i}}]=2$, the fixed field of
$\widetilde{\sigma}_{i}$, $L^{\widetilde{\sigma_{i}}}$, is a degree
$2$ extension of $K$. Hence $L^{\widetilde{\sigma}_{i}}=K(\sqrt{\Delta_{i}})$
for some $\Delta_{i}\in K$. 

We prove that $[K(\sqrt{\Delta_{i}})(\sqrt{\Delta_{1}},\dots,\sqrt{\Delta_{i-1}}):K(\sqrt{\Delta_{1}},\dots,\sqrt{\Delta_{i-1}})]=2$
by showing that $\sqrt{\Delta_{i}}\notin K(\sqrt{\Delta_{1}},\dots,\sqrt{\Delta_{i-1}})$.
Suppose, that this was the case, then by considering minimal polynomial,
we can show that $\sqrt{\Delta_{i}}=\alpha\sqrt{\Delta_{i_{1}}\dots\Delta_{i_{s}}}$
for some $\alpha\in K$ and subset $\{i_{1},\dots,i_{s}\}$ of $\{1,\dots,i-1\}$,
i.e., $\Delta_{i}$ is not linearly independent of $\Delta_{1},\dots,\Delta_{i-1}$
in $K^{*}/(K^{*})^{2}$. Hence $K(\sqrt{\Delta_{i}})\cong K(\sqrt{\Delta_{i_{1}}\dots\Delta_{i_{s}}})$
and $\sigma_{i_{1}}\in\widetilde{\sigma_{i}}$ fixes $\sqrt{\Delta_{i_{1}}\dots\Delta_{i_{s}}}$.
But since $\sigma_{i_{1}}\in\widetilde{\sigma_{j}}$ for $j\in\{i_{2},\dots,i_{s}\}$,
we also have that $\sigma_{i_{1}}$ fixes $\sqrt{\Delta_{j}}$. So
$\sqrt{\Delta_{i_{1}}\dots\Delta_{i_{s}}}=\sigma_{i_{1}}(\sqrt{\Delta_{i_{1}}\dots\Delta_{i_{s}}})=\sigma_{i_{1}}(\sqrt{\Delta_{i_{1}}})\sqrt{\Delta_{i_{2}}\dots\Delta_{i_{s}}}$,
hence $\sigma_{i_{1}}$ fixes $\sqrt{\Delta_{i_{1}}}$. This is a
contradiction, since then $K(\sqrt{\Delta_{i_{1}}})$ is the fixed
field of $\widetilde{\sigma}_{i_{1}}\times\left\langle \sigma_{i_{1}}\right\rangle =\mbox{Gal}(L/K)$. 

As $L^{\widetilde{\sigma}_{i}}\subset L$, we have that $\sqrt{\Delta_{i}}\in L$.
So $K(\sqrt{\Delta_{1}},\dots,\sqrt{\Delta_{n}})\subset L$, but by
the previous paragraph and the tower law, we also have $[K(\sqrt{\Delta_{1}},\dots,\sqrt{\Delta_{n}}):K]=2^{n}$.
Hence $L\cong K(\sqrt{\Delta_{1}},\dots,\sqrt{\Delta_{n}})$. 
\end{proof}
This means, that for each of the fields $K_{i}$, we can find an isomorphic
field of the form $K(\sqrt{\Delta_{1,i}},\dots,\sqrt{\Delta_{5,i}})$,
and the compositum, i.e. $L$, will be $K(\sqrt{\Delta_{1,1}},\dots,\sqrt{\Delta_{5,10}})$. 
\begin{prop}
Let $p=[A,B,C,D,E]\in\pp^{4}$ not on the Segre cubic or singular
hyperplanes, then the $32$ conics lying on $X_{p}$ defined by the
point $q_{1}$ are defined over the field
\[
K_{1}\cong K\left(\sqrt{\Delta q_{+C}p_{-0}p_{+1}},\sqrt{\Delta q_{+C}p_{+0}p_{-1}},\sqrt{\Delta q_{+D}p_{+0}p_{-2}},\sqrt{\Delta q_{+D}p_{-0}p_{+2}},\sqrt{-\Delta q_{-E}p_{+1}p_{-2}}\right).
\]
\end{prop}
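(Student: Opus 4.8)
The plan is to turn the abstract conclusion of Lemma~\ref{lem:Galois group of C_2^n} into the explicit list by exhibiting, for each of the five standard generators of $\Gal(K_1/K)$, one element of $K_1$ on which exactly that generator acts by a sign. By Proposition~\ref{prop:C_2^5} we have $\Gal(K_1/K)\cong C_2^5$ generated by the automorphisms $\sigma_1,\dots,\sigma_5$ constructed in its proof, and Lemma~\ref{lem:Galois group of C_2^n} then supplies $\Delta_{1,1},\dots,\Delta_{5,1}\in K$, independent in $K^*/(K^*)^2$, with $K_1\cong K(\sqrt{\Delta_{1,1}},\dots,\sqrt{\Delta_{5,1}})$ and with $K(\sqrt{\Delta_{j,1}})$ the fixed field of $\widetilde{\sigma_j}=\langle\sigma_k:k\neq j\rangle$. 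Hence it suffices, for each $j$, to write down one explicit $\theta_j\in K_1$ satisfying $\sigma_j(\theta_j)=-\theta_j$ and $\tau(\theta_j)=\theta_j$ for all $\tau\in\widetilde{\sigma_j}$, and then to identify $\theta_j^2\in K$ with the claimed class modulo $(K^*)^2$.

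Such a $\theta_j$ is produced by Lagrange resolvents. Let $\chi_j\colon\Gal(K_1/K)\to\{\pm1\}$ be the character with kernel $\widetilde{\sigma_j}$, choose a monomial $\eta_j$ in $r_{1,1},r_{2,1},r_{\mu,1}$, and set $\theta_j=\sum_{g\in\Gal(K_1/K)}\chi_j(g)\,g(\eta_j)$. Then $g(\theta_j)=\chi_j(g)\,\theta_j$ for every $g$, so $\sigma_j(\theta_j)=-\theta_j$ while $\tau(\theta_j)=\theta_j$ for $\tau\in\widetilde{\sigma_j}$, whence $\Delta_{j,1}:=\theta_j^2$ is $\Gal(K_1/K)$-invariant and lies in $K$. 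For a suitable $\eta_j$ the resolvent $\theta_j$ is nonzero, and then (as $\mathrm{char}\,K=0$) $\theta_j\in K_1^{\widetilde{\sigma_j}}\setminus K$, so $K(\theta_j)=K_1^{\widetilde{\sigma_j}}=K(\sqrt{\Delta_{j,1}})$. The images $g(\eta_j)$ are read off from the conjugate lists in the proof of Proposition~\ref{prop:C_2^5}: $\pm r_{1,1},\pm r_{1,1}^{-1},\pm\overline{r}_{1,1},\pm\overline{r}_{1,1}^{-1}$ for $r_{1,1}$, the analogous octuple for $r_{2,1}$, and $\pm r_{\mu,1},\pm r_{\mu,1}r_{1,1}^{-2},\pm\overline{r}_{\mu,1},\pm\overline{r}_{\mu,1}\overline{r}_{1,1}^{-2}$ for $r_{\mu,1}$.

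The remaining work is computational. First one writes the third intersection point $p_1=[A_1,\dots,E_1]$ of the line $\overline{q_{1}p}$ with the Segre cubic as a rational function of $[A,B,C,D,E]$: since $q_1$ is a node of $\{\Delta=0\}$ it is a double root of the cubic cut out on $\overline{q_{1}p}$, so $p_1$ is found by dividing out this known double root and reading off the remaining one. Substituting $p_1$ into \eqref{eq:first_root}--\eqref{eq:last_root}, forming the $\theta_j$, squaring, and simplifying, one checks that each $\Delta_{j,1}$ collapses --- modulo a square in $K^*$ --- to $\Delta$ times a product of three of the linear forms $q_{\pm C},\dots,p_{\pm3}$ of Proposition~\ref{prop:Singular Conditions}, giving the five classes $\Delta q_{+C}p_{-0}p_{+1}$, $\Delta q_{+C}p_{+0}p_{-1}$, $\Delta q_{+D}p_{+0}p_{-2}$, $\Delta q_{+D}p_{-0}p_{+2}$ and $-\Delta q_{-E}p_{+1}p_{-2}$. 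A strong guide and consistency check is that each triple occurring is precisely the triple of linear forms cutting out one of the $15$ Segre planes (compare the decomposition of $\{q_{+C}=0\}\cap\{\Delta=0\}$ given in Section~\ref{sec:Background}, together with Table~\ref{tab:List-of-Invariant}); so the geometry already predicts which three forms appear, and only the overall $\Delta$ factor and the single sign in the last class are genuinely new data.

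I expect the one real obstacle to be this final simplification: through $p_1$ and the nested radicals $r_{1,1},r_{2,1},r_{\mu,1}$ the squares $\theta_j^2$ emerge as bulky rational functions in $A,\dots,E$, and recognising that each reduces, modulo $(K^*)^2$, to the displayed product of four factors is a factorisation best carried out in Magma; the delicate points are that the $\Delta_{j,1}$ are only well-defined up to squares (so one may clear denominators freely, which is all Lemma~\ref{lem:Galois group of C_2^n} requires) and the determination of the sign in $-\Delta q_{-E}p_{+1}p_{-2}$. No separate independence argument is then needed: by construction $\theta_j$ generates the fixed field $K_1^{\widetilde{\sigma_j}}$, which plays exactly the role of the quadratic field $L^{\widetilde{\sigma_i}}$ in the proof of Lemma~\ref{lem:Galois group of C_2^n}, so that argument's conclusion --- the five classes are independent in $K^*/(K^*)^2$ and $K_1=K(\sqrt{\Delta_{1,1}},\dots,\sqrt{\Delta_{5,1}})$ --- applies directly.
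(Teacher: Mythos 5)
Your proposal is correct and takes essentially the same route as the paper: both rest on Lemma~\ref{lem:Galois group of C_2^n} together with the explicit generators $\sigma_{1},\dots,\sigma_{5}$ of $\mathrm{Gal}(K_{1}/K)\cong C_{2}^{5}$, produce for each $j$ an explicit generator of the quadratic subfield $K_{1}^{\widetilde{\sigma_{j}}}$, and leave the identification of its class in $K^{*}/(K^{*})^{2}$ with $\Delta$ times a product of the singular linear forms to a Magma computation. The only (cosmetic) difference is that you build these generators as Lagrange resolvents $\theta_{j}$ with $\sigma_{j}(\theta_{j})=-\theta_{j}$, so that $\theta_{j}^{2}\in K$ directly, whereas the paper takes $\widetilde{\sigma_{j}}$-invariant sums such as $r_{1,1}+\frac{1}{r_{1,1}}+\overline{r}_{1,1}+\frac{1}{\overline{r}_{1,1}}$ and reads the square root off the discriminant of their quadratic minimal polynomials, then rearranges the resulting basis of the same subgroup of $K^{*}/(K^{*})^{2}$.
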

\begin{proof}
We can use Lemma \ref{lem:Galois group of C_2^n} to construct $K_{1}$.
We have $\mbox{Gal}(K_{1}/K)=\left\langle \sigma_{1},\sigma_{2},\sigma_{3},\sigma_{4},\sigma_{5}\right\rangle $
where $\sigma_{j}$ acts on $r_{1,1},r_{2,1},r_{\mu,1}$ according
to the following table
\[
\begin{array}{c|ccccc}
 & \sigma_{1} & \sigma_{2} & \sigma_{3} & \sigma_{4} & \sigma_{5}\\
\hline r_{1,1} & -r_{1,1} & \frac{1}{r_{1,1}} & \overline{r}_{1,1} & r_{1,1} & r_{1,1}\\
r_{2,1} & r_{2,1} & \overline{r}_{2,1} & \frac{1}{r_{2,1}} & -r_{2,1} & r_{2,1}\\
r_{\mu,1} & r_{\mu,1} & \frac{r_{\mu,1}}{r_{1,1}^{2}} & \overline{r}_{\mu,1} & r_{\mu,1} & -r_{\mu,1}
\end{array}
\]
(where $\overline{r}_{2,1}=\frac{r_{1,1}}{\overline{r}_{1,1}}r_{2,1}$).
We calculate the fixed field of $\widetilde{\sigma_{1}}=\left\langle \sigma_{2},\dots,\sigma_{5}\right\rangle $
by considering the expression $r_{1,1}+\frac{1}{r_{1,1}}+\overline{r}_{1,1}+\frac{1}{\overline{r}_{1,1}}$
which is fixed under $\sigma_{j}$, $j\in\{2,\dots,5\}$ but not under
$\sigma_{1}$. Hence, upon calculating the discriminant of the minimal
polynomial (after checking it is quadratic) of such an expression,
we have that the fixed field of $\widetilde{\sigma}_{1}$ is $K\left(\sqrt{p_{-0}p_{+1}p_{+2}p_{-2}q_{+D}\left(-q_{-E}\right)}\right)$
. Similarly we can use the following expressions to calculate the
respective fixed fields:
\begin{itemize}
\item $r_{1,1}^{2}+\overline{r}_{1,1}^{2}$ for $\widetilde{\sigma}_{2}$
giving $K\left(\sqrt{p_{+1}p_{-1}p_{+2}p_{-2}}\right)$,
\item $r_{1,1}^{2}+\frac{1}{r_{1,1}^{2}}$ for $\widetilde{\sigma}_{3}$
giving $K(\sqrt{p_{+0}p_{-0}p_{+1}p_{-1}})$,
\item $r_{2,1}+\frac{1}{r_{2,1}}+\overline{r}_{2,1}+\frac{1}{\overline{r}_{2,1}}$
for $\widetilde{\sigma}_{4}$ giving $K(\sqrt{p_{+0}p_{+1}p_{-1}p_{-2}q_{+C}\left(-q_{-E}\right)})$,
\item $r_{\mu,1}+\frac{r_{\mu,1}}{r_{1,1}^{2}}+\overline{r}_{\mu,1}+\frac{\overline{r}_{\mu,1}}{\overline{r}_{1,1}^{2}}$
for $\widetilde{\sigma}_{5}$ giving $K(\sqrt{\Delta p_{+0}p_{-0}q_{+C}q_{+D}(-q_{-E})})$.
\end{itemize}
Putting all of this together and rearranging, we get the required
result.\end{proof}
\begin{thm}
\label{thm:Field Of Def 320}Let $p=[A,B,C,D,E]\in\pp^{4}$ be a general
point not lying on the Segre cubic or the $15$ singular hyperplanes
and let $L$ be the field where the $320$ conics of $X_{p}$ are
defined. Then $\mathrm{Gal}(L/K)\cong C_{2}^{10}$.\end{thm}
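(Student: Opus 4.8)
The plan is to compute the full field $L$ as the compositum $K(\sqrt{\Delta_{1,1}},\dots,\sqrt{\Delta_{5,10}})$ of the ten fields $K_i$, and then show that the resulting group $\Gal(L/K)$ is $C_2^{10}$ rather than some larger $C_2^{m}$. Since each $K_i$ has Galois group a subgroup of $C_2^5$ by Proposition \ref{prop:C_2^5}, and since compositums of multiquadratic extensions are again multiquadratic, $\Gal(L/K)$ is automatically of the form $C_2^{N}$ for some $N$. By Lemma \ref{lem:Galois group of C_2^n} the exponent $N$ equals the $\ff_2$-dimension of the subspace of $K^*/(K^*)^2$ spanned by all the discriminants $\Delta_{j,i}$ arising from all ten points $q_i$. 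So the entire statement reduces to a linear-algebra computation over $\ff_2$: list the $50$ square-class generators and determine the rank of the span.

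\emph{First} I would write down the analogue, for each of the ten $q_i$, of the explicit generators found for $K_1$ in the preceding proposition. By the symmetry of the construction (the group $\Omega/\Gamma \cong S_6$ permutes the nodes $q_i$ and the singular hyperplanes $q_{\pm C},\dots,p_{\pm 3}$ in a predictable way), each $K_i$ should be generated by five classes of the same shape $\Delta q_{\ast}p_{\ast}p_{\ast}$ (up to sign), with the hyperplane labels permuted according to how $S_6$ carries $q_1$ to $q_i$. I would tabulate all $50$ generators as vectors in $K^*/(K^*)^2$, using as coordinates the classes of the seventeen distinguished elements $\Delta, A, q_{\pm C}, q_{\pm D}, q_{\pm E}, p_{\pm 0}, p_{\pm 1}, p_{\pm 2}, p_{\pm 3}$ (and $-1$), which for a general $p$ are algebraically independent and hence linearly independent in $K^*/(K^*)^2$.

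\emph{Next} I would exhibit $10$ of these $50$ vectors that are $\ff_2$-linearly independent, forcing $N \geq 10$, and simultaneously show that every one of the $50$ vectors lies in the $10$-dimensional span, forcing $N \leq 10$. The cleanest way to get the upper bound is to observe that although there are many generators, they all lie in the subspace cut out by a fixed set of linear relations over $\ff_2$ coming from the Segre-cubic geometry: the three hyperplanes meeting $S_3$ in each Segre plane (as in the decomposition $\{q_{+C}=0\}\cap\{\Delta=0\}$ displayed before Lemma \ref{lem:Trope iff sing}) satisfy product relations modulo squares, so the fifteen hyperplane classes are not independent once one works on the relevant locus. Counting dimensions then pins $N$ to exactly $10$.

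\emph{The hard part} will be the bookkeeping for the upper bound: establishing precisely which $\ff_2$-linear relations hold among the fifty discriminant classes, so that the span collapses to dimension $10$ and not, say, $11$ or $12$. This requires either a careful invocation of the $S_6$-symmetry to reduce the fifty generators to a few orbits, or a direct Magma computation of the rank of the $50\times 17$ matrix over $\ff_2$ after substituting a generic $p$; given the paper's stated reliance on Magma, I expect the rank computation to be carried out explicitly. Verifying the lower bound $N\geq 10$ is comparatively easy, as it suffices to produce one $10\times 10$ minor of full rank, which the independence of the underlying polynomials for general $p$ guarantees.
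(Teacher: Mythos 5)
Your proposal matches the paper's proof in substance: the paper computes $K_2,\dots,K_{10}$ explicitly via the same symmetry of the construction, observes that the compositum of $K_1,\dots,K_{10}$ is generated by exactly $10$ square classes, and verifies that these $10$ classes are independent in $K^*/(K^*)^2$ for a general point (checking, e.g., $K=\qq$ and $[A,B,C,D,E]=[1,87,15,39,21]$), which is precisely your rank-$10$ linear-algebra computation. One small correction to your sketch of the upper bound: the collapse from the $50$ generators to dimension $10$ comes from $\ff_2$-linear relations among the exponent vectors of the composite classes $\Delta q_{*}p_{*}p_{*}$ themselves (for instance $-\Delta q_{-E}p_{+1}p_{-2}$ equals the product of $\Delta q_{+C}p_{-0}p_{+1}$, $\Delta q_{+D}p_{+0}p_{-2}$, $\Delta q_{+D}p_{-0}p_{+2}$, $\Delta q_{+E}p_{+0}p_{-3}$, $-\Delta q_{-C}p_{+2}p_{-3}$, $-\Delta Aq_{+E}q_{-E}$ and $-\Delta Aq_{+C}q_{-C}$ modulo squares), not from multiplicative relations among the individual hyperplane values $A,q_{\pm C},\dots,p_{\pm 3},\Delta$, which remain independent in $K^*/(K^*)^2$ for general $p$ --- indeed that independence is exactly what delivers your lower bound.
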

\begin{proof}
The first step is to calculate $K_{i}$ for $i\in\{2,\dots,10\}$
in terms of squares roots of elements in $K$. This is done by doing
same calculations as the above proposition with different $q_{i}$
(and hence $r_{1,i}$, $r_{2,i}$, $r_{\mu,i}$). \foreignlanguage{british}{We
find, up to rearrangements
\[
K_{2}=K\left(\sqrt{\Delta q_{+C}p_{-0}p_{+1}},\sqrt{\Delta q_{+C}p_{+0}p_{-1}},\sqrt{\Delta q_{+E}p_{+0}p_{-3}},\sqrt{\Delta q_{+E}p_{-0}p_{+3}},\sqrt{-\Delta q_{-D}p_{+1}p_{-3}}\right),
\]
\[
K_{3}=K\left(\sqrt{\Delta q_{+D}p_{-0}p_{+2}},\sqrt{\Delta q_{+D}p_{+0}p_{-2}},\sqrt{\Delta q_{+E}p_{+0}p_{-3}},\sqrt{\Delta q_{+E}p_{-0}p_{+3}},\sqrt{-\Delta q_{-C}p_{+2}p_{-3}}\right),
\]
\[
K_{4}=K\left(\sqrt{-\Delta q_{-D}p_{+1}p_{-3}},\sqrt{-\Delta q_{-D}p_{-1}p_{+3}},\sqrt{-\Delta q_{-E}p_{-1}p_{+2}},\sqrt{-\Delta q_{-E}p_{+1}p_{-2}},\sqrt{-\Delta q_{-C}p_{+2}p_{-3}}\right),
\]
\[
K_{5}=K\left(\sqrt{-\Delta Aq_{+E}q_{-E}},\sqrt{-\Delta Aq_{+D}q_{-D}},\sqrt{\Delta q_{+D}p_{+0}p_{-2}},\sqrt{\Delta q_{+E}p_{+0}p_{-3}},\sqrt{-\Delta q_{-E}p_{+1}p_{-2}}\right),
\]
\[
K_{6}=K\left(\sqrt{-\Delta Aq_{+E}q_{-E}},\sqrt{-\Delta Aq_{+D}q_{-D}},\sqrt{\Delta q_{+D}p_{-0}p_{+2}},\sqrt{\Delta q_{+E}p_{-0}p_{+3}},\sqrt{-\Delta q_{-E}p_{-1}p_{+2}}\right),
\]
\[
K_{7}=K\left(\sqrt{-\Delta Aq_{+C}q_{-C}},\sqrt{-\Delta Aq_{+E}q_{-E}},\sqrt{\Delta q_{+C}p_{+0}p_{-1}},\sqrt{\Delta q_{+E}p_{+0}p_{-3}},\sqrt{-\Delta q_{-E}p_{+1}p_{-2}}\right),
\]
\[
K_{8}=K\left(\sqrt{-\Delta Aq_{+C}q_{-C}},\sqrt{-\Delta Aq_{+E}q_{-E}},\sqrt{\Delta q_{+C}p_{-0}p_{+1}},\sqrt{\Delta q_{+E}p_{-0}p_{+3}},\sqrt{-\Delta q_{-E}p_{-1}p_{+2}}\right),
\]
\[
K_{9}=K\left(\sqrt{-\Delta Aq_{+C}q_{-C}},\sqrt{-\Delta q_{+D}q_{-D}},\sqrt{\Delta q_{+C}p_{+0}p_{-1}},\sqrt{\Delta q_{+D}p_{+0}p_{-2}},\sqrt{-\Delta q_{-D}p_{+1}p_{-3}}\right),
\]
\[
K_{10}=K\left(\sqrt{-\Delta Aq_{+C}q_{-C}},\sqrt{-\Delta q_{+D}q_{-D}},\sqrt{\Delta q_{+C}p_{-0}p_{+1}},\sqrt{\Delta q_{+D}p_{-0}p_{+2}},\sqrt{-\Delta q_{-D}p_{-1}p_{+3}}\right).
\]
}

\selectlanguage{british}%
Then as the $320$ conics of $X_{p}$ are defined over the compositum
of $K_{1},\dots,K_{10}$, we see that $L$ is the field extension
\foreignlanguage{english}{
\[
K\bigg(\sqrt{-\Delta Aq_{+C}q_{-C}},\sqrt{-\Delta Aq_{+D}q_{-D}},\sqrt{-\Delta Aq_{+E}q_{-E}},\sqrt{\Delta q_{+C}p_{+0}p_{-1}},\sqrt{\Delta q_{+C}p_{-0}p_{+1}},
\]
\[
\sqrt{\Delta q_{+D}p_{+0}p_{-2}},\sqrt{\Delta q_{+D}p_{-0}p_{+2}},\sqrt{\Delta q_{+E}p_{+0}p_{-3}},\sqrt{\Delta q_{+E}p_{-0}p_{+3}},\sqrt{-\Delta q_{-C}p_{+2}p_{-3}}\bigg).
\]
Now for a general point $p$, the $10$ square root are distinct and
not in $K$, i.e., $[L:K]=2^{10}$ (this holds, for example, in the
case $K=\qq$ and $[A,B,C,D,E]=[1,87,15,39,21]$). Hence $\mathrm{Gal}(L/K)\cong C_{2}^{10}$.}
\end{proof}
\selectlanguage{british}%

\section{Monodromy Group\label{sec:Monodromy-Group}}

Note that the field of definition of the $320$ conics involved equations
defining when a surface $X$ in $\mathcal{X}$ is singular (cf Theorem
\ref{thm:Field Of Def 320} and Proposition \ref{prop:Singular Conditions}).
We explain this by studying the Monodromy group of the conics over
a general non-singular K3 surface in $\mathcal{X}$. First we briefly
recall what the Monodromy group is.

Let $Z$ be an algebraic variety with $\pi:Z\to X$ a surjective finite
map of degree $d>0$. Let $p\in X$ be a general point of $X$ and
$\pi^{-1}(p)=\{q_{0},\dots,q_{d-1}\}$ be its fibre. Let $U\subset X$
be a suitable small Zariski open subset of $X$ and $V=\pi^{-1}(U)$.
Then for any loop $\lambda:[0,1]\to U$ based at $p$, and any point
$q_{i}\in\pi^{-1}(p)$, there exists a unique path $\widetilde{\lambda}_{i}$
in $V$ such that $\pi(\widetilde{\lambda}_{i})=\lambda$ and $\widetilde{\lambda}_{i}(0)=q_{i}$.
So we may define a permutation $\sigma_{\lambda}$ of $\pi^{-1}(p)$
by sending each point $q_{i}$ to $\widetilde{\lambda}_{i}(1)=q_{j}$
(for some $j$). Since $\phi_{\lambda}$ only depends on the homotopy
class of $\lambda$, we have a homomorphism $\pi_{1}(U,p)\to S_{d}$.
The image of this homomorphism is called the \emph{Monodromy group}
of the map $\pi$.

In our case we want the variety $Z$ to parametrise the quartic surfaces
of $\mathcal{X}$ with the $320$ conics on them, and study the permutations
of the conics as we draw loops on $\mathcal{X}$. We first study a
simpler problem, namely we will look at the Monodromy group of the
$16$ planes defined by the point $q_{1}=[1,0,-2,-2,2]\in\pp^{4}$.
We shall denote this set of $16$ planes by $T_{1}$, (they are the
tropes described in the proof of Theorem \ref{thm:320-Conics})
\selectlanguage{english}%
\begin{lem}
\label{lem:The_planes_q1}Let $p=[A,B,C,D,E]\in\pp^{4}$ be a general
point not lying on the Segre cubic or the $15$ singular hyperplanes.
The set $T_{1}$ is $\{\gamma(r_{0,1}x+r_{1,1}y+r_{2,1}z+r_{3,1}w)|\gamma\in\Gamma\}$
where
\selectlanguage{british}%
\begin{itemize}
\item {\small{$r_{0,1}=2^{3}B\sqrt{-q_{+D}q_{+C}q_{-E}}$ ,}}{\small \par}
\item {\small{$r_{1,1}=\sqrt{q_{+C}}\left(\sqrt{p_{-2}p_{-0}p_{+2}p_{+1}}+p_{+2}\sqrt{p_{+1}p_{+0}}+\sqrt{p_{-2}p_{-1}p_{+2}p_{+0}}+p_{-2}\sqrt{p_{-1}p_{-0}}\right)$,}}{\small \par}
\item {\small{$r_{2,1}=\sqrt{q_{+D}}\left(\sqrt{p_{-1}p_{-0}p_{+1}p_{+2}}+p_{+1}\sqrt{p_{+2}p_{+0}}+\sqrt{p_{-1}p_{-2}p_{+1}p_{+0}}+p_{-1}\sqrt{p_{-2}p_{-0}}\right)$,}}{\small \par}
\item {\small{$r_{3,1}=-\sqrt{-q_{-E}}\left(\sqrt{p_{-0}p_{-1}p_{+0}p_{+2}}+p_{+0}\sqrt{p_{+2}p_{+1}}+\sqrt{p_{-0}p_{-2}p_{+0}p_{+1}}+p_{-0}\sqrt{p_{-2}p_{-1}}\right)$,}}{\small \par}
\end{itemize}
where $p_{\pm i},q_{\pm\alpha}$ are the equations in Proposition
\ref{prop:Singular Conditions}.\end{lem}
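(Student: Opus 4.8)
The plan is to exhibit $T_1$ explicitly by first constructing one trope of the Kummer surface $X_{p_1}$ and then letting $\Gamma$ act on it to produce the remaining fifteen. Recall from the proof of Theorem~\ref{thm:320-Conics} that the sixteen tropes of $X_{p_1}$ are the duals of its sixteen singular points, and that $\Gamma$ acts transitively on these singular points; hence it suffices to pin down the single plane dual to the base singular point $[r_{3,1}:r_{2,1}:r_{1,1}:1]$ and to check that the four coefficients $r_{0,1},r_{1,1},r_{2,1},r_{3,1}$ in the claimed formula are, up to a common scalar, the coordinates of that dual plane. The action of the generators $\gamma_1,\dots,\gamma_4$ on $\pp^3$ permutes and sign-changes the coordinates $x,y,z,w$, so applying $\gamma$ to a linear form $r_{0,1}x+r_{1,1}y+r_{2,1}z+r_{3,1}w$ simply permutes and negates the $r_{j,1}$; this is exactly the orbit structure that produces the sixteen distinct planes, so the bulk of the argument reduces to verifying the four coefficient formulas.

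First I would specialise the data of Proposition~\ref{thm:Field of definition of 32 conics} to the node $q_1=[1,0,-2,-2,2]$: compute $p_1=[A_1,\dots,E_1]$ as the residual intersection of $S_3$ with the line through $q_1$ and $p$, then take $r_{1,1}$ to be a root of the octic \eqref{eq:first_root} and $r_{2,1}$ a root of the quadratic \eqref{eq:second_root}, with $r_{3,1}$ the prescribed $K$-linear combination. The trope dual to $[r_{3,1}:r_{2,1}:r_{1,1}:1]$ is $r_{3,1}x+r_{2,1}y+r_{1,1}z+w=0$; after clearing denominators by a common factor one rewrites its coefficients in the symmetric radical form stated in the lemma. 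The key identities to establish are that, modulo the Segre-cubic relation $\Delta(p_1)=0$ and the explicit values of $A_1,\dots,E_1$, each of $r_{0,1},r_{1,1},r_{2,1},r_{3,1}$ equals the displayed expression in the singular-hyperplane forms $q_{+C},q_{+D},q_{-E},p_{\pm 0},p_{\pm 1},p_{\pm 2}$. This matches the Segre-plane data recorded for $\gamma_1$ in Table~\ref{tab:List-of-Invariant}, where $q_1$ corresponds to the plane $q_{+C}=p_{+0}=p_{-1}=0$, explaining why precisely these forms appear under the radicals.

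The main obstacle is the algebraic verification that the radical expressions for the $r_{j,1}$ genuinely solve the defining equations \eqref{eq:first_root}--\eqref{eq:second_root} and reproduce the dual plane; this is a substantial but purely computational elimination, carried out in Magma as noted in the introductory remark. Concretely one substitutes the claimed $r_{1,1}$ into the octic and checks it vanishes after reducing by $\Delta(p_1)=0$, then does the same for $r_{2,1}$ in the quadratic, and finally confirms the proportionality $[r_{0,1}:r_{1,1}:r_{2,1}:r_{3,1}]$ against the dual-point computation. Once the base plane is confirmed, transitivity of $\Gamma$ on the sixteen singular points (Proposition~\ref{prop:Surface from sing pt}) gives $T_1=\{\gamma(r_{0,1}x+r_{1,1}y+r_{2,1}z+r_{3,1}w)\mid\gamma\in\Gamma\}$ immediately, since distinct singular points yield distinct tropes and $|\Gamma|=16$.
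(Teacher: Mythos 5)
Your proposal is correct in outline and begins exactly where the paper does: specialise Proposition~\ref{thm:Field of definition of 32 conics} to $q_{1}$, take the trope dual to the singular point $[r_{3,1}:r_{2,1}:r_{1,1}:1]$, clear denominators, and let $\Gamma$ act to produce the sixteen planes. The difference is in how the radical formulas for the coefficients are established. You propose a pure verification: substitute the claimed expressions into the defining polynomials \eqref{eq:first_root}--\eqref{eq:second_root} and check vanishing by elimination in Magma. The paper instead derives them: since each coordinate $r$ of the singular point lies in a triquadratic field $K(\sqrt{\Delta_{1}},\sqrt{\Delta_{2}},\sqrt{\Delta_{3}})$ and its minimal polynomial factorises as $(x\pm r)(x\pm\tfrac{1}{r})(x\pm\overline{r})(x\pm\tfrac{1}{\overline{r}})$, one writes $r=a_{0}+a_{1}\sqrt{\Delta_{1}}+\dots+a_{7}\sqrt{\Delta_{1}\Delta_{2}\Delta_{3}}$, uses the Galois action to show $a_{0}=0$ and that only the terms divisible by $\sqrt{\Delta_{1}}$ survive, and then reads off $a_{1},a_{4},a_{5},a_{7}$ from the minimal polynomials of the four combinations $r\pm\tfrac{1}{r}\pm\overline{r}\pm\tfrac{1}{\overline{r}}$, each of which equals $4a_{j}$ times a single product of square roots. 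The paper's route explains where the formulas come from and reduces the computer algebra to a few quadratic minimal polynomials per coordinate; yours presupposes the answer and runs a heavier elimination, but both legitimately prove the stated lemma. One caution if you carry out your check: after the homogeneous rescaling, the quantity that is a root of the octic \eqref{eq:first_root} is the ratio $r_{2,1}/r_{3,1}$ of the lemma's coefficients (the old $z$-coordinate of the singular point), not the lemma's $r_{1,1}$, which is the $y$-coefficient; as written, ``substitute the claimed $r_{1,1}$ into the octic'' conflates the two uses of the symbol, so the substitutions must be made for the appropriate ratios. Your aside relating the radicands to the Segre plane of $\gamma_{1}$ in Table~\ref{tab:List-of-Invariant} is not quite the right bookkeeping either: the forms appearing under the square roots are the nine singular hyperplanes through $q_{1}$ (the set later called $\Sigma_{q_{1}}$), not the data of a single Segre plane; this remark is not load-bearing, but it should be corrected or dropped.
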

\selectlanguage{british}%
\begin{proof}
Theorem \ref{thm:Field of definition of 32 conics} already gives
an expression for the planes, but \foreignlanguage{english}{we can
use the isomorphism $K(r_{1,1},r_{2,1},r_{\mu,1})\cong K\left(\sqrt{\Delta q_{+C}p_{-0}p_{+1}},\sqrt{\Delta q_{+C}p_{+0}p_{-1}},\sqrt{\Delta q_{+D}p_{+0}p_{-2}},\sqrt{\Delta q_{+D}p_{-0}p_{+2}},\sqrt{-\Delta q_{-E}p_{+1}p_{-2}}\right)$
to rewrite the singular point $[r_{3,1},r_{2,1},r_{1,1},1]$ in terms
of linear combinations of square roots. }

\selectlanguage{english}%
That is, let $r$ be any of those roots, we know that $r$ solves
a degree $8$ polynomial, whose terms are all even and $K(r)\cong K(\sqrt{\Delta_{1}},\sqrt{\Delta_{2}},\sqrt{\Delta_{3}})$
for some $\Delta_{i}$'s. So let $r=a_{0}+a_{1}\sqrt{\Delta_{1}}+a_{2}\sqrt{\Delta_{2}}+\dots+a_{7}\sqrt{\Delta_{1}\Delta_{2}\Delta_{3}}$.
The Galois group of $K(\sqrt{\Delta_{1}},\sqrt{\Delta_{2}},\sqrt{\Delta_{3}})$
is naturally generated by $\sigma_{1},\sigma_{2},\sigma_{3}$ where
$\sigma_{i}(\sqrt{\Delta_{j}})=\begin{cases}
-\sqrt{\Delta_{j}} & i=j\\
\sqrt{\Delta_{j}} & j\neq i
\end{cases}$. On one hand we know that the minimal polynomial of $r$ \foreignlanguage{british}{factorises}
as 
\[
(x-r)(x+r)\left(x-\frac{1}{r}\right)\left(x+\frac{1}{r}\right)(x-\overline{r})(x+\overline{r})\left(x-\frac{1}{\overline{r}}\right)\left(x+\frac{1}{\overline{r}}\right),
\]
and on the other hand, it \foreignlanguage{british}{factorises} 
\[
\prod_{g\in\mathrm{Gal}(K(r)/K)}(x-g(a_{0}+a_{1}\sqrt{\Delta_{1}}+\dots+a_{7}\sqrt{\Delta_{1}\Delta_{2}\Delta_{3}})).
\]
Now we have $0=r-r+\dots+\frac{1}{\overline{r}}-\frac{1}{\overline{r}}=8a_{0}$,
hence $a_{0}=0$. Without loss of generality, suppose that $\sigma_{1}(r)=-r$,
then $0=r-r=a_{2}\sqrt{\Delta_{2}}+a_{3}\sqrt{\Delta_{3}}+a_{6}\sqrt{\Delta_{2}\Delta_{3}}$,
so $r=\sqrt{\Delta_{1}}(a_{1}+a_{4}\sqrt{\Delta_{2}}+a_{5}\sqrt{\Delta_{3}}+a_{7}\sqrt{\Delta_{2}\Delta_{3}})$.
If we suppose $\sigma_{2}(r)=\frac{1}{r}$ and $\sigma_{3}(r)=\overline{r}$,
then note that
\begin{eqnarray*}
r+\frac{1}{r}+\overline{r}+\frac{1}{\overline{r}} & = & 4a_{1}\sqrt{\Delta_{1}},\\
r-\frac{1}{r}+\overline{r}-\frac{1}{\overline{r}} & = & 4a_{4}\sqrt{\Delta_{1}\Delta_{2}},\\
r+\frac{1}{r}-\overline{r}-\frac{1}{\overline{r}} & = & 4a_{5}\sqrt{\Delta_{1}\Delta_{3}},\\
r-\frac{1}{r}-\overline{r}+\frac{1}{\overline{r}} & = & 4a_{7}\sqrt{\Delta_{1}\Delta_{2}\Delta_{3}}.
\end{eqnarray*}
Hence we can easily work out $a_{1},a_{4},a_{5},a_{7}$, and therefore
$r$, by calculating the minimal polynomial of the above 4 expressions.
Applying that theory to $r_{3,1}$, $r_{2,1}$ and $r_{1,1}$ in turn,
after some rearrangment, we get the required result.
\end{proof}
\selectlanguage{english}%
Hence, to study the Monodromy group of those $16$ planes on a K3
surface, we need the object $\mathcal{Z}$ defined by \foreignlanguage{british}{
\[
\left\{ \left([A,B,C,D,E],[a,b,c,d]\right)\,\big|\,[a,b,c,d]\in\{\gamma\left([r_{0,1},r_{1,1},r_{2,1},r_{3,1}]\right):\gamma\in\Gamma\}\right\} \subset\mathbb{P}^{4}\times\mathbb{P}^{3}.
\]
Note that we set up $\pp_{[a,b,c,d]}^{3}$ to be the dual of $\pp_{[x,y,z,w]}^{3}$,
that is a point $[a,b,c,d]\in\pp_{[a,b,c,d]}^{3}$ represent the plane
$ax+by+cz+dw=0$ in $\pp_{[x,y,z,w]}^{3}$ which intersects $X_{[A,B,C,D,E]}$
in two conics. Now by the above lemma, $r_{i,1}$ involves square
roots and hence $\mathcal{Z}$ is not a variety. So instead of looking
at the planes defined by the point $[A,B,C,D,E]$, we will look at
the points $[A,B,C,D,E]$ that can be defined by a given plane. Pick
a point $[a,b,c,d]\in\pp^{3}$ and let }
\selectlanguage{british}%
\begin{itemize}
\item $A_{1}=(bc+ad)(bc-ad)(ac+bd)(ac-bd)(cd+ab)(cd-ab)$,
\item $B_{1}=2abcd(-a^{2}-b^{2}+c^{2}+d^{2})(-a^{2}+b^{2}+c^{2}-d^{2})(a^{2}-b^{2}+c^{2}-d^{2})(a^{2}+b^{2}+c^{2}+d^{2})$,
\item $C_{1}=(bc+ad)(bc-ad)(ac+db)(ac-bd)(a^{4}+b^{4}-c^{4}-d^{4})$,
\item $D_{1}=(bc+ad)(bc-ad)(cd+ab)(cd-ab)(-a^{4}+b^{4}-c^{4}+d^{4})$,
\item $E_{1}=(ac+db)(ac-bd)(cd+ab)(cd-ab)(a^{4}-b^{4}-c^{4}+d^{4}).$
\end{itemize}
Then $[A,B,C,D,E]$ defines a surface which has the plane $ax+by+cz+dy=0$
given by the point $q_{1}$ if and only if the point $[A,B,C,D,E]$
lies on the line joining $[A_{1},B_{1},C_{1},D_{1},E_{1}]$ and $q_{1}$.
So let us consider the variety $\mathcal{V}$ defined by
\[
\{E_{1}(2A+D)+D_{1}(2A-E)-2A_{1}(E+D)=C_{1}(2A+D)-D_{1}(2A+C)+2A_{1}(D-C)
\]
\[
=B_{1}(2A+D)-D_{1}B-2A_{1}B=0\}\subseteq\pp_{[A,B,C,D,E]}^{4}\times\pp_{[a,b,c,d]}^{3}.
\]

The variety $\mathcal{V}$ has the following properties:
\begin{itemize}
\item over any point of the dual of $\mathcal{L}$, we have a copy of $\mathbb{P}_{[A,B,C,D,E]}^{4}$,
\item over the conic $a^{2}-b^{2}-c^{2}+d^{2}$, we have a copy of $\mathbb{P}_{[A,B,C,D,E]}^{4}$
,
\item over the point $[1:0:-2:-2:2]$, we have a copy of $\mathbb{P}_{[a:b:c:d]}^{3}$,
\item everywhere else, the variety $\mathcal{V}$ coincides with to $\mathcal{Z}$.
\end{itemize}
We note that the dual of $\mathcal{L}\subset\pp_{[x,y,z,w]}^{3}$
is itself $\mathcal{L}\subset\pp_{[a,b,c,d]}^{3}$ (by making the
correspondence $x\leftrightarrow a,\dots,w\leftrightarrow d$), this
is because each $L_{i}$ is dual to $\overline{L}_{i}$. Similarly,
we have that each quadric $Q_{i}\subset\pp_{[x,y,z,w]}^{3}$ can be
identified with itself $Q_{i}\subset\pp_{[a,b,c,d]}^{3}$. 
\begin{lem}
Let $\mathcal{V}\subset\mathbb{P}_{[A,B,C,D,E]}^{4}\times\mathbb{P}_{[a:b:c:d]}^{3}$
be as above and $p_{1},p_{2}$ the projective maps $\mathcal{V}\mapsto\pp_{[a,b,c,d]}^{3}$
and $\mathcal{V}\mapsto\pp_{[A,B,C,D,E]}^{4}$ respectively. Then
the projective map $p_{2}:\mathcal{V}\to\mathbb{P}_{[A:B:C:D:E]}^{4}$
is smooth away from the points $p$ such that $p_{1}(p)$ lies on
$10$ quadrics $Q_{i}$, or $p_{2}(p)$ is the point $q_{1}=[1:0:-2:-2:2]$.\end{lem}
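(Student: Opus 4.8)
The plan is to treat $p_{2}$ as a generically finite map of degree $16$ (over a general point of $\pp^{4}$ its fibre is the set of the $16$ tropes of $X_{[A,B,C,D,E]}$ attached to $q_{1}$), compute its ramification locus by the Jacobian criterion, and match that locus with the two stated conditions. First I would use the listed properties of $\mathcal{V}$ to strip off the degenerate fibres. Over $q_{1}=[1:0:-2:-2:2]\in\pp^{4}$ the fibre of $p_{2}$ is a whole $\pp^{3}_{[a,b,c,d]}$, so $p_{2}$ has positive fibre dimension there and cannot be smooth; this is precisely the exception $p_{2}(p)=q_{1}$. Over the locus in $\pp^{3}_{[a,b,c,d]}$ where all of $A_{1},\dots,E_{1}$ vanish --- namely $\mathcal{L}$ together with the quadric $\{a^{2}-b^{2}-c^{2}+d^{2}=0\}=Q_{1}$ --- the fibre of $p_{1}$ is a whole $\pp^{4}$; since each of the $15$ pairs of lines of $\mathcal{L}$ lies on one of the ten quadrics $Q_{i}$ and $Q_{1}$ is itself one of them, every such $p$ satisfies $p_{1}(p)\in\bigcup_{i=1}^{10}Q_{i}$, so these points fall under the first exception. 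Hence it suffices to prove smoothness on the open set where $\mathcal{V}$ agrees with $\mathcal{Z}$, where $p_{2}$ is finite, and where all fibres are $0$-dimensional.

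On that open set I would reduce smoothness to a rank condition on the relative Jacobian. Write $M$ for the $3\times4$ matrix $\big(\partial F_{j}/\partial(a,b,c,d)\big)_{j=1,2,3}$ of the three forms $F_{1},F_{2},F_{3}$ defining $\mathcal{V}$, differentiated only in the fibre directions of $p_{2}$. A vertical tangent vector of $\mathcal{V}$ at $p$ is exactly an element of $\ker M$. Since $p_{2}$ is a dominant map between $4$-dimensional varieties, it is smooth at $p$ iff $dp_{2}\colon T_{p}\mathcal{V}\to T_{p_{2}(p)}\pp^{4}$ is an isomorphism, i.e. iff $T_{p}\mathcal{V}$ contains no nonzero vertical vector. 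Modulo the Euler relation this says $\rk M=3$; note that $\rk M=3$ also forces the full Jacobian of $(F_{1},F_{2},F_{3})$ to have rank $3$, so that $\mathcal{V}$ is automatically smooth of dimension $4$ at $p$. Thus on this locus $p_{2}$ is smooth at $p$ if and only if $\rk M=3$.

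It remains to identify the rank-drop locus $\{\rk M\le2\}$. Using that $p\in\mathcal{V}$ means $[A,B,C,D,E]$ lies on the line joining $q_{1}$ and $[A_{1},\dots,E_{1}]$, I would substitute this relation to eliminate $[A,B,C,D,E]$ and then compute the $3\times3$ minors of $M$ as forms in $[a,b,c,d]$ (the step where I would lean on Magma, as elsewhere in the paper). The claim to verify is that the common zero locus of these minors on $\mathcal{Z}$ is cut out precisely by the product of the ten quadrics, i.e. $\rk M\le2$ iff $[a,b,c,d]\in\bigcup_{i=1}^{10}Q_{i}$. A conceptual cross-check both guides and confirms the computation: the $16$ tropes form a single orbit under $\Gamma\cong C_{2}^{4}$, so $p_{2}$ restricted to $\mathcal{Z}$ is (generically) the quotient by this $\Gamma$-action, and the associated conic $Q'=a_{0}x^{2}+a_{1}y^{2}+a_{2}z^{2}+\dots$ has coefficients $a_{0},a_{1},a_{2}$ equal to products of the factors $r_{i}r_{j}\pm r_{k}$; each such factor vanishes exactly when the plane $[a,b,c,d]$ meets one of the quadrics $Q_{i}$, which is where two tropes collide and the cover ramifies.

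The main obstacle is this last step: the entries of $M$ are derivatives of the degree-$12$ forms $A_{1},\dots,E_{1}$, so its $3\times3$ minors are unwieldy high-degree polynomials, and the difficulty is to organise the elimination --- using the defining relations of $\mathcal{V}$ to cut the degree down --- so that the product $\prod_{i=1}^{10}Q_{i}$ emerges cleanly as the rank-drop locus rather than a large unfactored polynomial. Care is also needed to keep this genuine ramification separate from the degenerate components already removed in the first step, so that the final non-smooth locus is exactly the union of the two stated conditions.
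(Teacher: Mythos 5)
Your proposal is correct and follows essentially the same route as the paper: restrict to the locus where $\mathcal{V}$ agrees with $\mathcal{Z}$ (the degenerate fibres over $\mathcal{L}$, over $Q_{1}$, and over $q_{1}$ all falling under the stated exceptions), then apply the Jacobian criterion in the fibre directions $a,b,c,d$ and show the $3\times3$ minors vanish only on the union of the ten quadrics. The paper carries out the computation you defer to Magma by parameterising the line through $q_{1}$ and $[A_{1},\dots,E_{1}]$ with a parameter $\mu$ and finding the four minors equal $\mu^{3}aF,\dots,\mu^{3}dF$ with $F$ a product of powers of the ten quadrics, the factor $\mu^{3}$ accounting for the exception $p_{2}(p)=q_{1}$.
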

\begin{proof}
Note that the union of the $15$ pairs of lines \emph{$\mathcal{L}$}
are contained in the $10$ quadratics $Q_{i}$. Away from these $15$
pairs of lines, once we have fixed $[a:b:c:d]$, we have that the
point $[A:B:C:D:E]$ is of the form 
\[
[\mu a_{p}+(1-\mu),\mu b_{p},\mu c_{p}-2(1-\mu),\mu d_{p}-2(1-\mu),\mu e_{p}+2(1-\mu)]
\]
 for some $\mu\in K$. So we want to show that the Jacobian matrix
\[
\left(\frac{\partial g_{i}}{\partial x_{j}}\bigg|_{A=\mu a_{p}+(1-\mu),B=\mu b_{p},C=\mu c_{p}-2(1-\mu),D=\mu d_{p}-2(1-\mu),E=\mu e_{p}+2(1-\mu)}\right)_{i,j}
\]
 has rank $3$. This is equivalent to showing that the determinant
of at least one of the $4$ matrices obtained from deleting a row
in the Jacobian is non-zero. We can calculate that the $4$ determinants
are $\mu^{3}aF$, $\mu^{3}bF$, $\mu^{3}cF$, $\mu^{3}dF$ where {\small{
\begin{eqnarray*}
F & = & (bc-ad)^{4}(bc+ad)^{4}(ac-bd)^{2}(ac+bd)^{2}(ab-cd)^{4}(ab+cd)^{4}\\
 &  & \cdot(a^{2}-b^{2}-c^{2}+d^{2})^{6}(a^{2}-b^{2}+c^{2}-d^{2})^{2}(a^{2}+b^{2}-c^{2}-d^{2})^{4}(a^{2}+b^{2}+c^{2}+d^{2})^{2}.
\end{eqnarray*}
}}Note that $F$ is a product of the $10$ quadratics, $Q_{i}$, and
hence cannot be $0$. While if $\mu=0$, then the surface $[A,B,C,D,E]$
is $[1:0:-2:-2:2]$. If $F\neq0$ and $\mu\neq0$, then one of the
$4$ determinants must be non-zero, hence the projection map is smooth
at that place.
\end{proof}
So we only need to worry about points lying on one of the $10$ quadrics
$Q_{i}$. Since a point on it either lies on $\mathcal{L}$ or the
point gives rise to one of the 10 nodes, from our construction (and
not the variety $\mathcal{V}$), such cases correspond to the K3 surface
lying on one of the $15$ singular hyperplanes. Hence to study the
Monodromy group of $16$ planes defined by $q_{1}$, we need to look
at the $15$ singular hyperplanes. We will study this on the level
of the object $\mathcal{Z}$ and not the variety $\mathcal{V}$.
\begin{prop}
\label{thm:Monodromy}There exists a loop in $\mathbb{P}^{4}$ that:
\begin{enumerate}
\item goes around the singular hyperplane $q_{+C}$ and avoids all of the
$15$ singular hyperplanes,
\item changes the sign of $\sqrt{-q_{+C}}$, which in turn sends the plane
$r_{0,1}x+r_{1,1}y+r_{2,1}z+r_{3,1}w=0$ to the plane $\gamma_{3}\left(r_{0,1}x+r_{1,1}y+r_{2,1}z+r_{3,1}w\right)=0$.
(Where $\gamma_{3}\in\Gamma<\Omega$ as define in Section \ref{sec:Background}).
\end{enumerate}
\end{prop}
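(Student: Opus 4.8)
The plan is to exhibit the loop as a meridian around the hyperplane $\{q_{+C}=0\}$ and then to read off from the explicit radical expressions of Lemma~\ref{lem:The_planes_q1} that analytic continuation along it flips exactly the coordinates of the plane that $\gamma_{3}$ flips. First, for the topological assertion (1), I would use the standard meridian construction in the complement of the hyperplane arrangement. Choose a point $p_{0}$ lying on $\{q_{+C}=0\}$ but on none of the other $14$ singular hyperplanes; such a $p_{0}$ exists because $\{q_{+C}=0\}$ is irreducible and not contained in any other hyperplane of the arrangement. In a small complex disk transverse to $\{q_{+C}=0\}$ at $p_{0}$, take a circle $\lambda_{0}$ of radius small enough that it misses the (closed, $p_{0}$-avoiding) remaining hyperplanes; then join the base point $p$ to $\lambda_{0}$ by a path in the complement of all $15$ hyperplanes and conjugate. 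The resulting loop $\lambda$ winds once around $\{q_{+C}=0\}$, has winding number zero about every other singular hyperplane, and stays in the complement, which is precisely (1).

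For (2), I would track the effect of continuation along $\lambda$ on each radical appearing in the formulas for $r_{0,1},\dots,r_{3,1}$. Since $q_{+C}$ vanishes to order one along $\{q_{+C}=0\}$ and, along $\lambda$, stays nonzero while its value winds once around $0$, any radical whose radicand contains $q_{+C}$ to an odd power changes sign, whereas any radical whose radicand is a product of the other linear forms $q_{+D},q_{-E},p_{\pm i}$ returns to its initial value (those forms are nonvanishing along $\lambda$, and $\lambda$ has trivial winding about each of their zero loci). Reading the formulas, $r_{0,1}$ carries the single factor $\sqrt{-q_{+D}q_{+C}q_{-E}}$ and $r_{1,1}$ the single factor $\sqrt{q_{+C}}$, so both change sign, while $r_{2,1}=\sqrt{q_{+D}}(\cdots)$ and $r_{3,1}=-\sqrt{-q_{-E}}(\cdots)$ contain no $q_{+C}$-radical and are unchanged (the coordinate factor $B$ in $r_{0,1}$ is single-valued and returns to itself). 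Hence continuation along $\lambda$ sends the dual point $[r_{0,1}:r_{1,1}:r_{2,1}:r_{3,1}]$ to $[-r_{0,1}:-r_{1,1}:r_{2,1}:r_{3,1}]$.

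It then remains to identify this with the action of $\gamma_{3}$. As $\gamma_{3}\colon[x:y:z:w]\mapsto[x:y:-z:-w]$ is an involution, substituting it into the linear form gives $\gamma_{3}(r_{0,1}x+r_{1,1}y+r_{2,1}z+r_{3,1}w)=r_{0,1}x+r_{1,1}y-r_{2,1}z-r_{3,1}w$, i.e. the dual point $[r_{0,1}:r_{1,1}:-r_{2,1}:-r_{3,1}]$. Rescaling the monodromy image by $-1$ gives $[-r_{0,1}:-r_{1,1}:r_{2,1}:r_{3,1}]=[r_{0,1}:r_{1,1}:-r_{2,1}:-r_{3,1}]$, so the two planes coincide; since $\gamma_{3}\in\Gamma$, the image again lies in $T_{1}$, confirming that $\lambda$ permutes the fibre as claimed in (2).

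The step I expect to be the main obstacle is the radical bookkeeping in the second paragraph: one must verify rigorously that the inner nested radicals $\sqrt{p_{\pm i}p_{\pm j}}$ occurring inside the parenthesised factors of $r_{1,1},r_{2,1},r_{3,1}$ are genuinely invariant under continuation along $\lambda$, and that the branch of $\sqrt{-q_{+D}q_{+C}q_{-E}}$ can be split consistently so that only its $q_{+C}$-dependence contributes the sign (this also reconciles the sign of the statement's $\sqrt{-q_{+C}}$, since $\sqrt{q_{+C}}$ and $\sqrt{-q_{+C}}$ flip simultaneously). Both points reduce to computing the winding numbers of the relevant linear forms along $\lambda$, which is exactly why arranging $\lambda$ to avoid all $15$ hyperplanes, and not merely $\{q_{+C}=0\}$, is essential.
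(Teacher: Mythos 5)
Your proposal is correct and follows essentially the same route as the paper: both construct a loop that winds once around $\{q_{+C}=0\}$ with trivial winding about the other fourteen singular hyperplanes, and then read off from the radical expressions of Lemma~\ref{lem:The_planes_q1} that exactly $r_{0,1}$ and $r_{1,1}$ change sign (only the radicands containing $q_{+C}$ to odd order pick up monodromy), which after rescaling by $-1$ is precisely the action of $\gamma_{3}$ on the dual point. The only difference is presentational: the paper realises the meridian explicitly by moving the single coordinate $C$ along a radial--circular--radial path about $-2A$ with an explicit smallness bound on the radius $\rho$ (a construction it reuses and refines for the later loops around the Segre cubic), whereas you invoke the abstract transverse-disk meridian; this does not affect correctness.
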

\begin{proof}
Pick a point $[A:B:C:D:E]\in\mathbb{P}^{4}$ which does not represent
a singular K3 surface, and note that $C\in K\subset\mathbb{C}$ can
be written uniquely as $-2A+re^{i\phi}$ for some $r\in\mathbb{R}_{>0}$
and $\phi\in[0,2\pi)$. Define a loop $\widetilde{\lambda}(t)=[\widetilde{\lambda}_{A}(t),\widetilde{\lambda}_{B}(t),\widetilde{\lambda}_{C}(t),\widetilde{\lambda}_{D}(t),\widetilde{\lambda}_{E}(t)]$
($0\leq t\leq3$) as $\widetilde{\lambda}_{C}=-2A+f(t)$ and $\widetilde{\lambda}_{j}=j$
for all $t\in[0,3]$ and $j\in\{A,B,D,E\}$, where $f$ is composed
of the following $3$ segments:
\[
f(t)=\begin{cases}
(\rho t+r(1-t))e^{i\phi} & t\in[0,1]\\
\rho e^{i\phi+i(t-1)2\pi} & t\in[1,2]\\
(\rho(3-t)+r(t-2))e^{i\phi} & t\in[2,3]
\end{cases}
\]
and where $\rho\in\rr_{>0}$ satisfies {\small{
\[
\rho<\min\left\{ \left|B+2D+2E\right|,\left|-B+2D+2E\right|,\left|8A+B+2D-2E\right|,\left|8A-B+2D-2E\right|\right\} .
\]
}}Now consider the point $[r_{0,1},r_{1,1},r_{2,1},r_{3,1}]$ (where
we have fixed a root for each square roots) under this loop. As $t$
changes, the $10$ equations occurring in $r_{0,1},r_{1,1},r_{2,1},r_{3,1}$
are affected in the following ways:
\begin{itemize}
\item $q_{+C}=f(t)$,
\item $q_{+D}$, $q_{-E}$, $B$ all stay the same,
\item $p_{+0}=-p_{+1}=B+2D+2E+2f(t)$,
\item $p_{-0}=-p_{+1}=-B+2D+2E+2f(t)$,
\item $p_{+2}=8A+B+2D-2E-2f(t)$,
\item $p_{-2}=8A-B+2D-2E-2f(t)$.
\end{itemize}
For ease of argument we assume that for $0\leq t\leq1$, none of $\{p_{+0},p_{-0},p_{+1},p_{-1},p_{+2},p_{-2}\}$
are $0$ (if they are, the argument can be changed by slightly curving
the first segment instead of using a straight line). So for the first
segment, we can see that nothing remarkable happens. During the second
segment, we have chosen $\rho$ small enough so that none of $p_{+0},p_{-0},p_{+1},p_{-1},p_{+2}$
and \foreignlanguage{english}{$p_{-2}$} are $0$, but we see that
$\sqrt{q_{+C}}$ is affected. Indeed, if we choose the square root
of $e^{i\phi}$ to be $e^{\frac{i\phi}{2}}$, we see that $\sqrt{q_{+c}}=\sqrt{\rho}e^{\frac{i\phi}{2}+i(t-1)\pi}$.
Hence at $t=1$, $\sqrt{q_{+C}}$ is positive, but at $t=2$ the sign
has changed. Note that the third segment is the same as the first
segment but backwards.

Finally, one can see that by changing the sign of $\sqrt{q_{+C}}$,
we have $r_{0,1}\mapsto-r_{0,1}$, $r_{1,1}\mapsto-r_{1,1}$, $r_{2,1}\mapsto r_{2,1}$
and $r_{3,1}\mapsto r_{3,1}$. Hence the plane $r_{0,1}x+r_{1,1}y+r_{2,1}z+r_{3,1}w=0$
gets mapped to the plane $r_{0,1}x+r_{1,1}y-r_{2,1}z-r_{3,1}w=0=\gamma_{3}(r_{0,1}x+r_{1,1}y+r_{2,1}z+r_{3,1}w)$.
\end{proof}
A very similar argument works for the singular hyperplanes $q_{+D}$,
$q_{-E}$, $p_{+0}$, $p_{-0}$, $p_{+1}$, $p_{-1}$, $p_{+2}$ and
$p_{-2}$. For the singular hyperplanes $A,q_{-C},q_{-D},q_{+E},p_{+3}$
and $p_{-3}$, we note that either $[r_{0},r_{1},r_{2},r_{3}]$ are
completely unaffected, or see by direct calculations that we still
have $16$ different planes when plucking in $A=0$, or $q_{-C}=0$,$\dots$
etc. 
\begin{notation*}
Out of the $15$ singular hyperplanes, the point $q_{1}$ lies on
$9$ of them, namely $q_{+C}$, $q_{+D}$, $q_{-E}$, $p_{+0}$, $p_{-0}$,
$p_{+1}$, $p_{-1}$, $p_{+2}$ and $p_{-2}$. We shall denote this
set by $\Sigma_{q_{1}}$. \end{notation*}
\begin{prop}
The Monodromy group of set $T_{1}$ is isomorphic to $\Gamma$ and
hence $C_{2}^{4}$.\end{prop}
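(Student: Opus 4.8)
The plan is to establish the two inclusions $\mathrm{Mon}(T_1)\subseteq\Gamma$ and $\Gamma\subseteq\mathrm{Mon}(T_1)$, where I regard $\Gamma$ as acting on the sixteen planes $T_1=\{\gamma(\ell):\gamma\in\Gamma\}$ by its regular representation, under the bijection $\gamma\leftrightarrow\gamma(\ell)$ with $\ell=r_{0,1}x+r_{1,1}y+r_{2,1}z+r_{3,1}w$ from Lemma~\ref{lem:The_planes_q1}. This identifies $\mathrm{Mon}(T_1)$ with a subgroup of $S_{16}$, and the first inclusion will pin it down inside $\Gamma\cong C_2^4$.

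For the inclusion $\mathrm{Mon}(T_1)\subseteq\Gamma$ I would argue structurally rather than by computation. A monodromy element is the analytic continuation of the multivalued coefficient vector $[r_{0,1}:r_{1,1}:r_{2,1}:r_{3,1}]$ around a loop, so it acts as a $K$-automorphism $\tau$ of the field generated by the $r_{j,1}$, and permutes the sixteen planes since $T_1$ is defined over $K$. Now $\gamma\in\Gamma$ acts on the coordinates $x,y,z,w$ (equivalently, by a $\pm1$ permutation of the coefficient positions), whereas $\tau$ acts on the scalar values $r_{j,1}$; these two actions commute because the substitution defining $\gamma$ has entries in $K$. Hence, writing $\tau(\ell)=g_\tau(\ell)$ for the unique $g_\tau\in\Gamma$ with this property (the orbit is free, giving sixteen distinct planes), one gets $\tau(\gamma(\ell))=\gamma(\tau(\ell))=(\gamma g_\tau)(\ell)$, so $\tau$ is left translation by $g_\tau$ and $\mathrm{Mon}(T_1)\subseteq\Gamma$.

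For the reverse inclusion I would realise a generating set of $\Gamma$ by loops. Proposition~\ref{thm:Monodromy} already supplies $\gamma_3$ from a loop around $q_{+C}$, and the identical construction around $q_{+D}$ gives $\gamma_4$; in each case only the single outer root shared by $r_{0,1}$ and one further coordinate changes sign, so these loops produce exactly the pure sign changes and generate $\langle\gamma_3,\gamma_4\rangle$. It remains to produce the two position-swapping generators $\gamma_1,\gamma_2$. These I would extract from loops around the hyperplanes $p_{\pm j}$ in $\Sigma_{q_1}$: such a loop fixes $r_{0,1}$ (which contains no $p$) and flips the inner roots $\sqrt{p_{\pm j}}$ occurring inside $r_{1,1},r_{2,1},r_{3,1}$. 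Since $\tau(r_{0,1})=\pm r_{0,1}$, realising a swap of positions $0$ and $j$ forces $r_{j,1}\,\tau(r_{j,1})=\pm r_{0,1}^2\in K$; equivalently the bracketed four-term sum $S$ in, say, $r_{1,1}=\sqrt{q_{+C}}\,S$ must be continued to $c/S$ for a suitable $c\in K$. This is precisely the inversion $r\mapsto 1/r$ (or $r\mapsto\overline r$) appearing in the Galois analysis of Proposition~\ref{prop:C_2^5}, and it reproduces a coordinate swap projectively, i.e. conjugation of $\ell$ to $\gamma_1(\ell)$ or $\gamma_2(\ell)$.

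The main obstacle is exactly this last verification. Unlike the $q$-hyperplane loops, whose effect is a transparent sign flip, the $p$-hyperplane loops act on the four-term square-root sums and return a genuinely different-looking algebraic number; the content is that this number nonetheless equals the coordinate vector of a $\Gamma$-translate of $\ell$, and moreover one effecting a position swap rather than a mere sign change. This is the computation deferred to ``direct calculation'' in the remark after Proposition~\ref{thm:Monodromy}. Granting it, the loops around the nine hyperplanes of $\Sigma_{q_1}$ realise $\gamma_1,\gamma_2,\gamma_3,\gamma_4$, which generate $\Gamma$; together with the first inclusion this yields $\mathrm{Mon}(T_1)=\Gamma\cong C_2^4$.
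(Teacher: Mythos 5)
Your proposal is correct and follows essentially the same route as the paper: the monodromy is generated by loops around the nine singular hyperplanes in $\Sigma_{q_{1}}$, each acting by a sign change on one of the square roots occurring in $r_{0,1},\dots,r_{3,1}$, and the resulting elements of $\Gamma$ (which the paper, like you, obtains by a deferred ``direct calculation'' for the $p_{\pm j}$ loops) generate all of $\Gamma\cong C_{2}^{4}$. Your explicit upper-bound argument --- that analytic continuation commutes with the $K$-rational $\Gamma$-action and the orbit of $\ell$ is free, so every monodromy element is translation by some $g_{\tau}\in\Gamma$ --- is a welcome elaboration of a step the paper leaves implicit, but it does not change the overall structure of the argument.
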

\begin{proof}
By the above discussion, the permutations of the $16$ planes come
from changing the sign of the square roots $\sqrt{\Delta_{i}}$ for
$\Delta_{i}\in\Sigma_{q_{1}}$. By direct calculation, we see that:
\begin{itemize}
\item $\sqrt{q_{+C}}\mapsto-\sqrt{q_{+C}}$ corresponds to $r_{0,1}x+r_{1,1}y+r_{2,1}z+r_{3,1}w\mapsto\gamma_{3}(r_{0,1}x+r_{1,1}y+r_{2,1}z+r_{3,1}w)$
\item $\sqrt{q_{+D}}\mapsto-\sqrt{q_{+D}}$ corresponds to $r_{0,1}x+r_{1,1}y+r_{2,1}z+r_{3,1}w\mapsto\gamma_{4}(r_{0,1}x+r_{1,1}y+r_{2,1}z+r_{3,1}w)$
\item $\sqrt{-q_{-E}}\mapsto-\sqrt{-q_{-E}}$ corresponds to $r_{0,1}x+r_{1,1}y+r_{2,1}z+r_{3,1}w\mapsto\gamma_{3}\gamma_{4}(r_{0,1}x+r_{1,1}y+r_{2,1}z+r_{3,1}w)$
\item $\sqrt{p_{+0}}\mapsto-\sqrt{p_{+0}}$ corresponds to $r_{0,1}x+r_{1,1}y+r_{2,1}z+r_{3,1}w\mapsto\gamma_{1}\gamma_{2}(r_{0,1}x+r_{1,1}y+r_{2,1}z+r_{3,1}w)$
\item $\sqrt{p_{-0}}\mapsto-\sqrt{p_{-0}}$ corresponds to $r_{0,1}x+r_{1,1}y+r_{2,1}z+r_{3,1}w\mapsto\gamma_{1}\gamma_{2}\gamma_{3}\gamma_{4}(r_{0,1}x+r_{1,1}y+r_{2,1}z+r_{3,1}w)$
\item $\sqrt{p_{+1}}\mapsto-\sqrt{p_{+1}}$ corresponds to $r_{0,1}x+r_{1,1}y+r_{2,1}z+r_{3,1}w\mapsto\gamma_{2}\gamma_{3}\gamma_{4}(r_{0,1}x+r_{1,1}y+r_{2,1}z+r_{3,1}w)$
\item $\sqrt{p_{-1}}\mapsto-\sqrt{p_{-1}}$ corresponds to $r_{0,1}x+r_{1,1}y+r_{2,1}z+r_{3,1}w\mapsto\gamma_{2}\gamma_{3}(r_{0,1}x+r_{1,1}y+r_{2,1}z+r_{3,1}w)$
\item $\sqrt{p_{+2}}\mapsto-\sqrt{p_{+2}}$ corresponds to $r_{0,1}x+r_{1,1}y+r_{2,1}z+r_{3,1}w\mapsto\gamma_{1}\gamma_{3}\gamma_{4}(r_{0,1}x+r_{1,1}y+r_{2,1}z+r_{3,1}w)$
\item $\sqrt{p_{-2}}\mapsto-\sqrt{p_{-2}}$ corresponds to $r_{0,1}x+r_{1,1}y+r_{2,1}z+r_{3,1}w\mapsto\gamma_{1}\gamma_{4}(r_{0,1}x+r_{1,1}y+r_{2,1}z+r_{3,1}w).$
\end{itemize}
Hence, we see that the Monodromy group $T_{1}$ is isomorphic to $\Gamma$.
\end{proof}
Next we want to calculate the Monodromy group of the $160$ planes
which intersect a surface in $\mathcal{X}$ to give the $320$ conics.
First we will calculate the Monodromy group of the $16$ planes associated
to $q_{i}$ for each $i\in[2,\dots,10]$, which we denote each set
by $T_{i}$. To do so, we will use the group $\Omega$ acting on $\pp^{3}\times\pp^{4}$.
This group permutes the $15$ singular planes and the $10$ points
$q_{1},\dots,q_{10}$ as follows (using permutation notation):
\begin{itemize}
\item $\phi_{1}$ acts as $(p_{+0},p_{-0})(p_{+1},p_{-1})(p_{+2},p_{-2})(p_{+3},p_{-3})$
and as $(q_{5},q_{6})(q_{7},q_{8})(q_{9},q_{10})$,
\item $\phi_{2}$ acts as $(q_{+D},q_{+E})(q_{-D},q_{-E})(p_{+2},p_{+3})(p_{-2},p_{-3})$
and as $(q_{1},q_{2})(q_{7},q_{9})(q_{8},q_{10})$,
\item $\phi_{3}$ acts as $(q_{+C},q_{+D})(q_{-C},q_{-D})(p_{+1},p_{+2})(p_{-1},p_{-2})$
and as $(q_{2},q_{3})(q_{5},q_{7})(q_{6},q_{8})$,
\item $\phi_{4}$ acts as $(q_{+D},q_{-D})(q_{+E},q_{-E})(p_{+0},p_{-1})(p_{-0},p_{+1})(p_{+2},p_{-3})(p_{-2},p_{+3})$
and as $(q_{1},q_{2})(q_{3},q_{4})(q_{5},q_{6})$,
\item $\phi_{5}$ acts as $(A,q_{+C})(q_{+D},p_{+0})(q_{-D},p_{-1})(q_{+E},p_{-0})(q_{-E},p_{+1})(p_{+2},p_{-3})$
and as $(q_{1},q_{5})(q_{2},q_{6})(q_{7},q_{10})$.
\end{itemize}
Hence we can find the Monodromy group of $T_{i}$ from the Monodromy
group of $T_{1}$. Pick an element $\phi\in\Omega$ which permutes
$q_{1}$ and $q_{i}$, then the element of the Monodromy group associated
to the singular hyperplane $H$ is $\phi^{-1}\cdot\gamma_{\phi(H)}\cdot\phi$,
where $\gamma_{\phi(H)}$ is the element of the Monodromy group of
$T_{1}$. Note that since $\Gamma$ is normal in $\Omega$, we necessarily
end up with an element of $\Gamma$.
\begin{example*}
We work out explicitly some of the cases for the point $q_{2}$. We
use the element $\phi_{2}$ which permutes the point $q_{1}$ and
$q_{2}$.

Since $\phi_{2}(A)=A$, the element corresponding to the hyperplane
$A$ in the Monodromy group of $T_{2}$ is $\phi_{2}^{-1}\gamma_{A}\phi_{2}=\phi_{2}^{-1}\cdot\id\cdot\phi_{2}=\id$.

Since $\phi_{2}(q_{+E})=q_{+D}$, the element corresponding to the
hyperplane $q_{+E}$ in the Monodromy group of $T_{2}$ is $\phi_{2}^{-1}\gamma_{q_{+D}}\phi_{2}=\phi_{2}^{-1}\cdot\gamma_{4}\cdot\phi_{2}=\gamma_{3}\gamma_{4}$.
\end{example*}
We summarise the information in Table \ref{tab:Monodromy_of_16} below,
the row headings are the $15$ singular hyperplanes and the column
headings are the $10$ nodes $q_{i}$. Each entry is an element $\gamma\in\Gamma$
and represents how changing the sign of the square root of that singular
hyperplane permutes the $16$ planes associated to $q_{i}$ (which
we know can be represented as an element of $\Gamma$). An empty box
stands for the identity element in $\Gamma$.

\begin{sidewaystable}
\begin{tabular}{|c|c|c|c|c|c|c|c|c|c|c|}
\cline{2-11} 
\multicolumn{1}{c|}{} & $q_{1}$ & $q_{2}$ & $q_{3}$ & $q_{4}$ & $q_{5}$ & $q_{6}$ & $q_{7}$ & $q_{8}$ & $q_{9}$ & $q_{10}$\tabularnewline
\hline 
$A$ &  &  &  &  & $\gamma_{3}$ & $\gamma_{3}$ & $\gamma_{4}$ & $\gamma_{4}$ & $\gamma_{3}\gamma_{4}$ & $\gamma_{3}\gamma_{4}$\tabularnewline
\hline 
$q_{+C}$ & $\gamma_{3}$ & $\gamma_{3}$ &  &  &  &  & $\gamma_{1}\gamma_{3}$ & $\gamma_{1}$ & $\gamma_{1}\gamma_{3}$ & $\gamma_{1}$\tabularnewline
\hline 
$-q_{-C}$ &  &  & $\gamma_{3}$ & $\gamma_{3}$ &  &  & $\gamma_{1}\gamma_{4}$ & $\gamma_{1}\gamma_{3}\gamma_{4}$ & $\gamma_{1}\gamma_{3}\gamma_{4}$ & $\gamma_{1}\gamma_{4}$\tabularnewline
\hline 
$q_{+D}$ & $\gamma_{4}$ &  & $\gamma_{4}$ &  & $\gamma_{2}\gamma_{4}$ & $\gamma_{2}$ &  &  & $\gamma_{2}\gamma_{4}$ & $\gamma_{2}$\tabularnewline
\hline 
$-q_{-D}$ &  & $\gamma_{4}$ &  & $\gamma_{4}$ & $\gamma_{2}\gamma_{3}$ & $\gamma_{2}\gamma_{3}\gamma_{4}$ &  &  & $\gamma_{2}\gamma_{3}\gamma_{4}$ & $\gamma_{2}\gamma_{3}$\tabularnewline
\hline 
$q_{+E}$ &  & $\gamma_{3}\gamma_{4}$ & $\gamma_{3}\gamma_{4}$ &  & $\gamma_{1}\gamma_{2}\gamma_{3}\gamma_{4}$ & $\gamma_{1}\gamma_{2}$ & $\gamma_{1}\gamma_{2}\gamma_{3}\gamma_{4}$ & $\gamma_{1}\gamma_{2}$ &  & \tabularnewline
\hline 
$-q_{-E}$ & $\gamma_{3}\gamma_{4}$ &  &  & $\gamma_{3}\gamma_{4}$ & $\gamma_{1}\gamma_{2}\gamma_{3}$ & $\gamma_{1}\gamma_{2}\gamma_{4}$ & $\gamma_{1}\gamma_{2}\gamma_{4}$ & $\gamma_{1}\gamma_{2}\gamma_{3}$ &  & \tabularnewline
\hline 
$p_{+0}$ & $\gamma_{1}\gamma_{2}$ & $\gamma_{2}$ & $\gamma_{1}$ &  & $\gamma_{1}$ &  & $\gamma_{2}$ &  & $\gamma_{1}\gamma_{2}$ & \tabularnewline
\hline 
$p_{-0}$ & $\gamma_{1}\gamma_{2}\gamma_{3}\gamma_{4}$ & $\gamma_{2}\gamma_{4}$ & $\gamma_{1}\gamma_{3}$ &  &  & $\gamma_{1}\gamma_{3}$ &  & $\gamma_{2}\gamma_{4}$ &  & $\gamma_{1}\gamma_{2}\gamma_{3}\gamma_{4}$\tabularnewline
\hline 
$p_{+1}$ & $\gamma_{2}\gamma_{3}\gamma_{4}$ & $\gamma_{1}\gamma_{2}\gamma_{4}$ &  & $\gamma_{1}\gamma_{3}$ & $\gamma_{1}\gamma_{3}$ &  &  & $\gamma_{2}\gamma_{3}\gamma_{4}$ &  & $\gamma_{1}\gamma_{2}\gamma_{4}$\tabularnewline
\hline 
$p_{-1}$ & $\gamma_{2}\gamma_{3}$ & $\gamma_{1}\gamma_{2}\gamma_{3}$ &  & $\gamma_{1}$ &  & $\gamma_{1}$ & $\gamma_{2}\gamma_{3}$ &  & $\gamma_{1}\gamma_{2}\gamma_{3}$ & \tabularnewline
\hline 
$p_{+2}$ & $\gamma_{1}\gamma_{3}\gamma_{4}$ &  & $\gamma_{1}\gamma_{2}\gamma_{3}$ & $\gamma_{2}\gamma_{4}$ &  & $\gamma_{1}\gamma_{3}\gamma_{4}$ & $\gamma_{2}\gamma_{4}$ &  &  & $\gamma_{1}\gamma_{2}\gamma_{3}$\tabularnewline
\hline 
$p_{-2}$ & $\gamma_{1}\gamma_{4}$ &  & $\gamma_{1}\gamma_{2}\gamma_{4}$ & $\gamma_{2}$ & $\gamma_{1}\gamma_{4}$ &  &  & $\gamma_{2}$ & $\gamma_{1}\gamma_{2}\gamma_{4}$ & \tabularnewline
\hline 
$p_{+3}$ &  & $\gamma_{1}\gamma_{4}$ & $\gamma_{2}\gamma_{3}$ & $\gamma_{1}\gamma_{2}\gamma_{3}\gamma_{4}$ &  & $\gamma_{1}\gamma_{4}$ &  & $\gamma_{2}\gamma_{3}$ & $\gamma_{1}\gamma_{2}\gamma_{3}\gamma_{4}$ & \tabularnewline
\hline 
$p_{-3}$ &  & $\gamma_{1}\gamma_{3}\gamma_{4}$ & $\gamma_{2}\gamma_{3}\gamma_{4}$ & $\gamma_{1}\gamma_{2}$ & $\gamma_{1}\gamma_{3}\gamma_{4}$ &  & $\gamma_{2}\gamma_{3}\gamma_{4}$ &  &  & $\gamma_{1}\gamma_{2}$\tabularnewline
\hline 
\end{tabular}

\caption{\label{tab:Monodromy_of_16}}
\end{sidewaystable}

\begin{thm}
The Monodromy group of the $160$ planes is $C_{2}^{9}$.\end{thm}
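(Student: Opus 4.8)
The plan is to turn the statement into a single rank computation over $\ff_2$. First I would argue that the Monodromy group $M$ of the $160$ planes is generated by the fifteen elements recorded in Table~\ref{tab:Monodromy_of_16}. The lemma on the smoothness of $p_2$ shows that the covering parametrising the planes is branched only over the loci corresponding to the $15$ singular hyperplanes (together with the excluded point $q_1$): away from the ten quadrics $Q_i$ and from $q_1$ the projection is smooth. Since the fundamental group of the complement of a hyperplane arrangement in $\pp^4$ is generated by loops around the individual hyperplanes, $M$ is generated by the monodromy of one small loop around each singular hyperplane, and Proposition~\ref{thm:Monodromy} together with the conjugation rule $\phi^{-1}\gamma_{\phi(H)}\phi$ used to build the table identifies each such generator as an explicit element of $\Gamma^{10}$.

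Because $\Gamma\cong C_2^4$, the ambient group $\Gamma^{10}\cong C_2^{40}$ is an elementary abelian $2$-group, and hence so is its subgroup $M$; thus $M\cong C_2^{k}$ and everything reduces to finding $k=\dim_{\ff_2}M$. I would identify $\Gamma^{10}$ with $\ff_2^{40}$ by recording, in each of the ten blocks of four coordinates, the exponents of $\gamma_1,\gamma_2,\gamma_3,\gamma_4$. Then each generator is literally the corresponding row of Table~\ref{tab:Monodromy_of_16} viewed in $\ff_2^{40}$, and $k$ is the rank of the resulting $15\times 40$ matrix, which I would row-reduce (in Magma, as elsewhere in the paper).

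Carrying out the reduction, and writing $g_H$ for the generator attached to the singular hyperplane $H$, one finds the six relations
\begin{align*}
&g_A+g_{q_{+C}}+g_{q_{-C}}+g_{q_{+D}}+g_{q_{-D}}+g_{q_{+E}}+g_{q_{-E}}=0,\\
&g_A+g_{q_{+C}}+g_{q_{+D}}+g_{q_{+E}}+g_{p_{+0}}+g_{p_{-0}}=0,\\
&g_{q_{-C}}+g_{q_{+D}}+g_{q_{+E}}+g_{p_{+1}}+g_{p_{-1}}=0,\\
&g_{q_{+C}}+g_{q_{-D}}+g_{q_{+E}}+g_{p_{+2}}+g_{p_{-2}}=0,\\
&g_A+g_{p_{+0}}+g_{p_{+1}}+g_{p_{+2}}+g_{p_{+3}}=0,\\
&g_{q_{-C}}+g_{q_{-D}}+g_{q_{+E}}+g_{p_{+0}}+g_{p_{+1}}+g_{p_{+2}}+g_{p_{-3}}=0,
\end{align*}
each verified by checking that every one of the $40$ coordinates occurs an even number of times on the left. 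These relations are manifestly independent: the six elements $g_{q_{-E}},g_{p_{-0}},g_{p_{-1}},g_{p_{-2}},g_{p_{+3}},g_{p_{-3}}$ each appear in exactly one of them, so the relations express these six generators in terms of the remaining nine. Hence $M$ is generated by those nine elements, and a direct row reduction exhibits nine pivot columns, proving the nine are independent. Therefore $k=15-6=9$ and $M\cong C_2^{9}$.

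The main obstacle is the first step: being certain that the fifteen meridians really generate all of $M$, that is, that no monodromy has been overlooked off the singular hyperplanes — in particular that the Segre cubic $\Delta=0$ (which does enter the field of definition in Theorem~\ref{thm:Field Of Def 320}) and any deeper strata of the discriminant contribute nothing new — and that every entry of Table~\ref{tab:Monodromy_of_16} is the correct conjugate of an entry of the $T_1$ column under the $\Omega$-action. Once the generating set and its action on the ten blocks are secured, the determination of the rank is entirely mechanical.
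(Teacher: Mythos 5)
Your proposal is correct and is essentially the paper's own argument: both reduce the statement to the observation that the group generated by the fifteen rows of Table~\ref{tab:Monodromy_of_16} is elementary abelian (so isomorphism type is determined by its order) and then to a finite computation with that table, which the paper phrases as checking that the subgroup of $S_{160}$ has order $2^{9}$ and you phrase as an equivalent rank computation in $\ff_2^{40}$. Your six relations do check out against the table (each of the $40$ coordinates indeed cancels), and they are independent for the reason you give, so your presentation makes the upper bound $2^9$ hand-verifiable where the paper simply defers the order computation to the computer; the generation-by-meridians point you flag as the main obstacle is likewise left implicit in the paper, resting on the preceding smoothness lemma and the discussion following it.
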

\begin{proof}
We use the information given in Table \ref{tab:Monodromy_of_16}.
The Monodromy group is a subgroup of $S_{160}$. It can be checked
that the subgroup of $S_{160}$ generated by $A,\dots,p_{-3}$ (once
they have been embedded in $S_{160}$) has order $2^{9}$. Now as
each of the generators $A,\dots,p_{-3}$ commutes with each other
and have order $2$, we know that every non-trivial element of the
Monodromy group have order $2$. Since the only group of order $2^{9}$
with every non-trivial elements being involutions is $C_{2}^{9}$,
the Monodromy group of the $160$ planes is $C_{2}^{9}$.\end{proof}
\begin{defn}
Each conic comes in a natural pair, i.e., each plane intersecting
the K3 surface gives two conics. We shall call two such conics \emph{conjugates}
of each other.
\end{defn}
Recall from Page \ref{pg:Q2} that given the K3 surface $X_{p}$ and
the plane $T:r_{0,1}x+r_{1,1}y+r_{2,1}z+r_{3,1}w=0$ associated to
the point $q_{1}$, then the two conics in $T\cap X_{p}$ are $Q_{1}+\sqrt{\frac{\mu_{1}}{\Delta}}Q'$
and $Q_{1}-\sqrt{\frac{\mu_{1}}{\Delta}}Q'$. Let $r_{\mu}=\sqrt{\frac{\mu}{\Delta}}\cdot a_{2}$,
then the conics are expressed as $a_{2}Q_{1}\pm r_{\mu}Q'$. Then,
using the same method as in the proof of Lemma \ref{lem:The_planes_q1},
we can express $r_{\mu}$and $a_{2}$ explicitly in terms of $A,B,C,D,E$
and find that:

{\small{
\begin{eqnarray*}
r_{\mu} & = & \frac{\sqrt{-q_{+C}q_{+D}q_{-E}}}{\sqrt{\Delta}}\bigg(b_{1}\sqrt{p_{-0}p_{+0}}+b_{2}\sqrt{p_{+1}p_{-1}}+b_{3}\sqrt{p_{+2}p_{-2}}+b_{4}\sqrt{p_{+0}p_{-0}p_{+1}p_{-1}p_{+2}p_{-2}})
\end{eqnarray*}
}}{\small \par}

\[
a_{2}=b_{5}\sqrt{p_{+1}p_{-1}p_{+2}p_{-2}}+b_{6}\sqrt{p_{+0}p_{-0}p_{+2}p_{-2}}+b_{7}\sqrt{p_{+0}p_{-0}p_{+1}p_{-1}}+b_{8}
\]
 where $b_{i}\in\mathbb{Z}[A,B,C,D,E]$. From the equation of the
conics $a_{2}Q_{1}\pm r_{\mu}Q'$ we can get the equations of the
other $30$ conics defined by the point $q_{1}$, using the action
of $\Gamma$. Hence we see that to calculate the Monodromy group of
the $32$ conics defined by the point $q_{1}$ we need to look at
loops going around the $15$ singular hyperplanes and the Segre cubic.
We first use the following lemma.
\begin{lem}
\label{lem:Lem}Let $\left|x\right|<\min\left\{ 1,\left|\frac{c}{2a}\right|,\left|\frac{c}{2b}\right|,\left|\frac{1}{a}\right|,\left|\frac{1}{b}\right|\right\} $,
then $x$ satisfies $\left|ax+bx^{2}\right|\leq\left|ax\right|+\left|bx^{2}\right|<\left|c\right|$.
In particular this implies that $c+ax+bx^{2}\neq0$.\end{lem}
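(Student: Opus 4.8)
The plan is to prove the inequality by the triangle inequality, reducing the bound on the whole expression $ax+bx^{2}$ to a separate bound on each monomial. Concretely, I would start from $\left|ax+bx^{2}\right|\le\left|ax\right|+\left|bx^{2}\right|=\left|a\right|\left|x\right|+\left|b\right|\left|x\right|^{2}$, which is the inequality already recorded in the statement, and then aim to show that each of the two summands is strictly less than $\tfrac{1}{2}\left|c\right|$, so that their sum is strictly less than $\left|c\right|$.

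For the linear term this is immediate: the hypothesis $\left|x\right|<\left|\tfrac{c}{2a}\right|$ gives $\left|a\right|\left|x\right|<\left|a\right|\cdot\tfrac{\left|c\right|}{2\left|a\right|}=\tfrac{1}{2}\left|c\right|$. The quadratic term is the only place requiring a small idea: I would use the hypothesis $\left|x\right|<1$ to replace the square by a first power, since $0\le\left|x\right|<1$ forces $\left|x\right|^{2}\le\left|x\right|$, and then apply $\left|x\right|<\left|\tfrac{c}{2b}\right|$ to obtain $\left|b\right|\left|x\right|^{2}\le\left|b\right|\left|x\right|<\left|b\right|\cdot\tfrac{\left|c\right|}{2\left|b\right|}=\tfrac{1}{2}\left|c\right|$. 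Adding the two estimates yields $\left|ax+bx^{2}\right|<\left|c\right|$. I note that the two further quantities $\left|\tfrac{1}{a}\right|$ and $\left|\tfrac{1}{b}\right|$ appearing in the minimum play no role in this particular argument; including them only shrinks the admissible range of $\left|x\right|$, so the inequality continues to hold (these extra bounds are presumably the ones actually needed at the point where the lemma is invoked in the Segre-cubic loop).

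For the final assertion I would invoke the reverse triangle inequality: since $\left|ax+bx^{2}\right|<\left|c\right|$, we get $\left|c+ax+bx^{2}\right|\ge\left|c\right|-\left|ax+bx^{2}\right|>0$, and hence $c+ax+bx^{2}\neq0$.

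There is essentially no serious obstacle here, as the lemma is elementary. The only point worth handling with care is the quadratic term, where one must avoid comparing $\left|x\right|$ against $\sqrt{\left|c/(2b)\right|}$ and instead exploit $\left|x\right|<1$ to linearise $\left|x\right|^{2}$; this is what makes the single constraint $\left|x\right|<\left|c/(2b)\right|$ suffice rather than a square-root bound. One should also either assume $a,b\neq0$ or adopt the convention that a vanishing denominator contributes $+\infty$ to the minimum, so that the corresponding constraint becomes vacuous and the statement remains meaningful in the degenerate cases.
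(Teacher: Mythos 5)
Your proof is correct, and it takes a genuinely different route from the paper's. The paper proves the bound by a five-way case analysis according to which of the quantities $1,\left|\frac{c}{2a}\right|,\left|\frac{c}{2b}\right|,\left|\frac{1}{a}\right|,\left|\frac{1}{b}\right|$ actually attains the minimum, and in each case chains together the resulting inequalities to reach $\left|c\right|$. You instead split the target $\left|c\right|$ into two halves and bound each monomial separately: $\left|a\right|\left|x\right|<\tfrac{1}{2}\left|c\right|$ directly from $\left|x\right|<\left|\frac{c}{2a}\right|$, and $\left|b\right|\left|x\right|^{2}\leq\left|b\right|\left|x\right|<\tfrac{1}{2}\left|c\right|$ after using $\left|x\right|<1$ to linearise the square. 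This is shorter, avoids the case split entirely, and has the added value of showing that the two constraints $\left|x\right|<\left|\frac{1}{a}\right|$ and $\left|x\right|<\left|\frac{1}{b}\right|$ are not needed for the conclusion (they only shrink the admissible range, as you note); the paper's argument, by contrast, genuinely uses all five bounds because it must handle the case where one of $\left|\frac{1}{a}\right|,\left|\frac{1}{b}\right|$ is the minimum. Your closing remark about the degenerate cases $a=0$ or $b=0$ (interpreting a vanishing denominator as contributing $+\infty$, and noting the statement is vacuous when $c=0$) is a reasonable reading that the paper leaves implicit; in the application $c$ is nonzero for a non-singular surface, so nothing is lost.
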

\begin{proof}
This is a simple cases by cases proof:
\begin{casenv}
\item $\left|x\right|<1=\min\left\{ 1,\left|\frac{c}{2a}\right|,\left|\frac{c}{2b}\right|,\left|\frac{1}{a}\right|,\left|\frac{1}{b}\right|\right\} $.
Then $\left|ax\right|+\left|bx^{2}\right|<\left|a\right|+\left|b\right|$,
since $1\leq\left|\frac{c}{2a}\right|,\left|\frac{c}{2b}\right|$
we know that $a,b\leq\left|\frac{c}{2}\right|$. Hence $\left|a\right|+\left|b\right|\leq\left|c\right|$.
\item $\left|x\right|<\frac{1}{\left|a\right|}=\min\left\{ 1,\left|\frac{c}{2a}\right|,\left|\frac{c}{2b}\right|,\left|\frac{1}{a}\right|,\left|\frac{1}{b}\right|\right\} $.
Then $\left|ax\right|+\left|bx^{2}\right|<1+\left|\frac{b}{a^{2}}\right|\leq1+\left|\frac{b}{a}\right|$.
Since $\frac{1}{\left|a\right|}\leq\frac{1}{\left|b\right|}$ implies
$\left|\frac{b}{a}\right|\leq1$, and $\frac{1}{\left|a\right|}\leq\frac{\left|c\right|}{\left|2a\right|}$
implies $2\leq\left|c\right|$, then $1+\left|\frac{b}{a}\right|\leq\left|c\right|$.
\selectlanguage{english}%
\item $\left|x\right|<\frac{1}{\left|b\right|}=\min\left\{ 1,\left|\frac{c}{2a}\right|,\left|\frac{c}{2b}\right|,\left|\frac{1}{a}\right|,\left|\frac{1}{b}\right|\right\} $.
Then $\left|ax\right|+\left|bx^{2}\right|<\left|\frac{a}{b}\right|+\left|\frac{1}{b}\right|$.
As in case $2$, we see that $\left|\frac{a}{b}\right|\leq1$ and
$2\leq\left|c\right|$, hence $\left|\frac{a}{b}\right|+\left|\frac{1}{b}\right|\leq2\leq\left|c\right|$.
\item $\left|x\right|<\left|\frac{c}{2a}\right|=\min\left\{ 1,\left|\frac{c}{2a}\right|,\left|\frac{c}{2b}\right|,\left|\frac{1}{a}\right|,\left|\frac{1}{b}\right|\right\} $.
Then $\left|ax\right|+\left|bx^{2}\right|<\left|\frac{c}{2}\right|+\left|\frac{bc^{2}}{2a^{2}}\right|\leq\left|\frac{c}{2}\right|+\left|\frac{bc}{2a}\right|$.
As $\left|\frac{c}{2a}\right|\leq\left|\frac{c}{2b}\right|$ implies
$\left|\frac{b}{a}\right|\leq1$, we have that $\left|\frac{c}{2}\right|+\left|\frac{b}{a}\right|\left|\frac{c}{2}\right|\leq\left|c\right|$.
\item $\left|x\right|<\left|\frac{c}{2b}\right|=\min\left\{ 1,\left|\frac{c}{2a}\right|,\left|\frac{c}{2b}\right|,\left|\frac{1}{a}\right|,\left|\frac{1}{b}\right|\right\} $.
Then $\left|ax\right|+\left|bx^{2}\right|<\left|\frac{a}{b}\right|\left|\frac{c}{2}\right|+\left|\frac{c}{2b}\right|\left|\frac{c}{2}\right|\leq\left|\frac{c}{2}\right|+\left|\frac{c}{2}\right|\leq\left|c\right|$. 
\end{casenv}
\end{proof}
\begin{prop}
Given a non-singular K3 surface defined by the point $p=[A,B,C,D,E]$,
we can find a loop based at $p$ that changes the sign of $\sqrt{q_{+C}}$
and leaves the sign of $15$ square roots $\{\sqrt{-q_{-C}},\dots,\sqrt{p_{+3}},\sqrt{\Delta}\}$
unchanged.\end{prop}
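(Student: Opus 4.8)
The plan is to reuse, essentially verbatim, the loop constructed in the proof of Proposition~\ref{thm:Monodromy}: fix $A,B,D,E$ and let $C$ vary so that $q_{+C}=2A+C$ traces the three-segment path $f(t)$ there (an inward radial segment, a small circle of radius $\rho$ about the origin, and the first radial segment traversed backwards). By that proposition this loop already sends $\sqrt{q_{+C}}\mapsto-\sqrt{q_{+C}}$, so all that remains is to check that none of the other $15$ square roots $\sqrt{A},\sqrt{-q_{-C}},\sqrt{q_{+D}},\sqrt{-q_{-D}},\sqrt{q_{+E}},\sqrt{-q_{-E}},\sqrt{p_{+0}},\dots,\sqrt{p_{-3}},\sqrt{\Delta}$ changes sign and that the loop stays inside the non-singular locus. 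My first observation is that the third segment is the first one reversed, so the two radial segments contribute no net winding to any nowhere-zero continuous quantity along the loop; hence only the middle circle can produce a sign change, and it suffices to control the winding of each quantity about $0$ as $q_{+C}$ runs once around $|q_{+C}|=\rho$.

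Next I would dispose of the elementary cases. The quantities $A,q_{+D},q_{-D},q_{+E},q_{-E}$ do not involve $C$, so they are constant along the loop and their square roots are trivially preserved. The remaining singular-hyperplane equations $q_{-C}$ and $p_{\pm0},p_{\pm1},p_{\pm2},p_{\pm3}$ are, with $A,B,D,E$ held fixed, affine in $q_{+C}$ (for instance $q_{-C}=4A-q_{+C}$ and $p_{+0}=B+2D+2E+2q_{+C}$), each with constant term non-zero for general $p$. As $q_{+C}$ runs around a circle of radius $\rho$ such an affine function traces a circle of radius proportional to $\rho$ about its non-zero constant term; choosing $\rho$ small enough that this circle excludes $0$ shows the function does not wind, so its square root is unchanged, and at the same time that the hyperplane is not met on the middle segment.

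The one genuinely new point, and the main obstacle, is the Segre-cubic factor $\sqrt{\Delta}$, which enters through $r_\mu$ and is \emph{quadratic} rather than affine in $q_{+C}$. Substituting $C=q_{+C}-2A$ into Equation~(\ref{eq:Delta}) I would write $\Delta=c_0+c_1 q_{+C}+c_2 q_{+C}^2$ with leading coefficient $c_2=-4A$ and constant term $c_0=\Delta|_{q_{+C}=0}=A\bigl(B^2-4(D+E)^2\bigr)$, which is non-zero for general $p$. This is precisely the setting of Lemma~\ref{lem:Lem}: applying it with $a=c_1$, $b=c_2$, $c=c_0$ and $x=q_{+C}$ yields a threshold on $\rho$ guaranteeing $|c_1 q_{+C}+c_2 q_{+C}^2|<|c_0|$ on the circle, so that $\Delta$ stays in the disc $|\Delta-c_0|<|c_0|$, which omits $0$. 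Hence $\Delta$ has winding number $0$ about the origin along the middle segment, $\Delta\neq0$ there, and $\sqrt{\Delta}$ returns to its original sign.

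Finally I would assemble the loop by taking $\rho$ smaller than all of the finitely many thresholds produced above (one for each affine hyperplane and the one from Lemma~\ref{lem:Lem} for $\Delta$), so that the middle circle lies in the non-singular locus and realises the single sign change $\sqrt{q_{+C}}\mapsto-\sqrt{q_{+C}}$ with the other $15$ signs fixed. For the radial segments, $\Delta$ and each singular hyperplane vanish only on a finite subset of the $q_{+C}$-line through $p$, so I would route the segment from the base value of $q_{+C}$ to the circle so as to avoid this finite set (curving the straight ray slightly if it meets it, exactly as in Proposition~\ref{thm:Monodromy}) and traverse its reverse for the third segment, keeping the whole loop in the complement of the singular locus. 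The expected difficulty is concentrated entirely in the quadratic behaviour of $\Delta$, which is why Lemma~\ref{lem:Lem} was isolated in advance; every other quantity reduces to the constant or affine bookkeeping already implicit in Proposition~\ref{thm:Monodromy}.
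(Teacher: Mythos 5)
Your proposal is correct and follows essentially the same route as the paper: reuse the loop of Proposition~\ref{thm:Monodromy}, shrink $\rho$ so that the affine-in-$q_{+C}$ hyperplane equations never wind around $0$, and invoke Lemma~\ref{lem:Lem} on the quadratic dependence of $\Delta$ on $q_{+C}$ (your constant term $A\bigl(B^{2}-4(D+E)^{2}\bigr)$ agrees with the paper's, which contains a typographical slip). If anything you are slightly more explicit than the paper in also bounding $\rho$ against $q_{-C}$ and $p_{\pm3}$, which the paper leaves implicit.
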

\begin{proof}
We construct the same loop as in Theorem \ref{thm:Monodromy} with
a slight modification, this time we put the constraint that $\rho\in\rr_{>0}$
satisfies{\footnotesize{
\[
\rho<\min\big\{\left|B+2D+2E\right|,\left|-B+2D+2E\right|,\left|8A+B+2D-2E\right|,\left|8A-B+2D-2E\right|,1,
\]
\[
\frac{1}{\left|-4A^{2}+4DE\right|},\frac{1}{\left|4A\right|},\frac{\left|A^{2}B-4A(D^{2}+E^{2})+8ADE\right|}{2\left|-4A^{2}+4DE\right|},\frac{\left|A^{2}B-4A(D^{2}+E^{2})+8ADE\right|}{2\left|4A\right|}\bigg\}.
\]
}}The extra condition means that by Lemma \ref{lem:Lem} $\Delta=A^{2}B-4A(D^{2}+E^{2})+8ADE+(4CD-4A^{2})f(t)-4Af(t)^{2}\neq0$
during the second segment of the loop. At the same time, we see that
the sign of the square root cannot change. Finally, this extra condition
on $\rho$ does not effect the rest of the proof of Theorem \ref{thm:Monodromy}.
\end{proof}
We also prove that we can construct a loop based at $p$ which goes
around the Segre cubic but not the $15$ singular hyperplanes.
\begin{prop}
Given a non-singular K3 surface defined by the point $p=[A,B,C,D,E]$,
there exists a loop based at $p$ that changes the sign of $\sqrt{\Delta}$
and leaves the sign of the $15$ square roots $\{\sqrt{q_{+C}},\dots,\sqrt{p_{+3}}\}$. \end{prop}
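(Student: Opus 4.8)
The plan is to move only in the $B$-direction, fixing $A,C,D,E$; since $p$ is non-singular we may dehomogenise by setting $A=1$ (the hyperplane $A=0$ is one of the fifteen, so $A\neq0$) and regard the parameter $B$ as ranging over the complex line $\cc$. The reason $B$ is the right coordinate to vary is that, among the fifteen linear forms of Proposition \ref{prop:Singular Conditions}, the seven forms $A,q_{+C},q_{-C},q_{+D},q_{-D},q_{+E},q_{-E}$ do not involve $B$; along the whole path they remain the non-zero constants they take at $p$, so their square roots neither vanish nor change sign. The eight remaining forms $p_{+0},\dots,p_{-3}$ are affine-linear in $B$, each vanishing at a single point $\beta\in\cc$, whereas $\Delta=B^{2}+n$ with $n=16-4(C^{2}+D^{2}+E^{2})+4CDE$ is a genuine quadratic in $B$.

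The essential new point, compared with Theorem \ref{thm:Monodromy} and the preceding proposition where the locus to be encircled was itself a coordinate hyperplane, is that $\{\Delta=0\}$ is not a coordinate: on the $B$-line it is the quadratic $B^{2}+n$, vanishing at the two points $\pm b_{*}$ with $b_{*}=\sqrt{-n}$. Writing $\Delta=(B-b_{*})(B+b_{*})$, a loop in the $B$-line flips the sign of $\sqrt{\Delta}$ exactly when its total winding number about $\{\pm b_{*}\}$ is odd, so it is enough to encircle precisely one of the two roots. I would therefore take a lollipop loop $\gamma$ based at the original value $B_{0}$: a segment from $B_{0}$ to a point near $b_{*}$, a small circle once around $b_{*}$, and the segment back. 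Choosing the circle small enough to enclose neither $-b_{*}$ nor any of the eight points $\beta$, and the segment in a generic direction so that it meets none of the marked points $\{\pm b_{*}\}\cup\{\beta_{+0},\dots,\beta_{-3}\}$, the loop $\gamma$ has winding number $1$ about $b_{*}$ and $0$ about every other marked point.

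Tracking the branches of the square roots along $\gamma$ then finishes the argument. Each $\sqrt{p_{\pm j}}$ returns to its initial branch because $\gamma$ has zero winding about the corresponding $\beta$; the seven roots $\sqrt{A},\sqrt{q_{\pm C}},\sqrt{q_{\pm D}},\sqrt{q_{\pm E}}$ are constant; and since $\gamma$ winds once about $b_{*}$ but not about $-b_{*}$, the factor $\sqrt{B-b_{*}}$ changes sign while $\sqrt{B+b_{*}}$ does not, so $\sqrt{\Delta}\mapsto-\sqrt{\Delta}$. As $\gamma$ crosses none of the eight hyperplanes $p_{\pm j}=0$ and never meets $\Delta=0$, it stays in the complement of all fifteen singular hyperplanes and of the Segre cubic, hence is a genuine loop of non-singular K3 surfaces, exactly as required.

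The only thing that could obstruct this is a coincidence among the marked points, namely $b_{*}=-b_{*}$ (i.e.\ $n=0$, so that $\Delta$ acquires a double root at which the winding is forced to be even) or $\pm b_{*}=\beta$ for some $\beta$. Each is a codimension-one condition on $p$ and so fails only on a proper closed subset, so for the general non-singular point the construction goes through verbatim. For the exceptional non-singular points one instead moves along a generic line through $p$ in $\pp^{4}$: there $\Delta$ restricts to a cubic with three simple roots and each of the fifteen forms to a linear polynomial with one root, and encircling a single root of the cubic while avoiding winding about the other seventeen marked points again produces an odd winding of $\Delta$ and leaves the fifteen square roots unchanged. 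If an explicit bound is preferred to the homotopy argument, one can fix the radius $\rho$ of the small circle below the distance from $b_{*}$ to the nearest other marked point, in the spirit of Lemma \ref{lem:Lem}, making all the non-vanishing claims effective.
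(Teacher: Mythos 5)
Your proposal is correct and follows essentially the same route as the paper: vary only the $B$-coordinate, so that the seven $B$-free forms stay constant, each $p_{\pm j}$ has a single root, and $\Delta$ restricts to a quadratic in $B$ whose two roots $\pm b_{*}$ you separate by a small circle about just one of them. The only real difference is in the exceptional cases: the paper shows directly (via the Segre planes) that a root of $\Delta$ in $B$ can never lie on a singular hyperplane when $p$ is non-singular, and handles the double-root case $b_{*}=0$ by first perturbing $A$, whereas you fall back on a generic line through $p$; both fixes are valid.
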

\begin{proof}
First we claim that given such a K3 surface there exists $B'$ such
that the surface $[A,B',C,D,E]$ lies on the Segre cubic but not on
the 15 singular hyperplanes. The first part of the statement is easy
to see, just solve $\Delta=0$ in terms of $B'$, which, since $A\neq0$
as our surface is non-singular, has a solution as we are working over
$\mathbb{\overline{Q}}$ (in fact $B'$ is at worst in a degree $2$
extension of the field of definition of $A,B,C,D,E$). For the second
part, recall that if a point lies on the Segre cubic and one of the
15 singular hyperplanes, then it lies on a Segre plane, i.e., it must
lie on a further two singular hyperplanes. But note that any surface
lying on a Segre plane must lie on one singular hyperplane which is
defined with no $B$ (and hence $B'$) terms. Therefore if $[A,B',C,D,E]$
lied on such an singular hyperplane, then so would $[A,B,C,D,E]$,
contradicting our assumption that the surface is non-singular.

We will construct a loop in the same way as in proof of Theorem \ref{thm:Monodromy}.
Note that $B$ can be written uniquely as $B'+re^{i\phi}$ for some
$r\in\mathbb{R}_{>0}$ and $\phi\in[0,2\pi)$ . Define a loop $\widetilde{\gamma}(t)=[\widetilde{\gamma}_{A}(t),\widetilde{\gamma}_{B}(t),\widetilde{\gamma}_{C}(t),\widetilde{\gamma}_{D}(t),\widetilde{\gamma}_{E}(t)]$
($0\leq t\leq3$) as $\widetilde{\gamma}_{B}=iB'+f(t)$ and $\widetilde{\gamma}_{i}=i$
for all $ $$t$ and $i\in\{A,B,D,E\}$, where $f$ is composed of
the following $3$ segments:
\[
f(t)=\begin{cases}
(\rho t+r(1-t))e^{i\phi} & t\in[0,1]\\
\rho e^{i\phi+i(t-1)2\pi} & t\in[1,2]\\
(\rho(3-t)+r(t-2))e^{i\phi} & t\in[2,3]
\end{cases}
\]
and $\rho\in\rr_{>0}$ satisfies 
\begin{eqnarray*}
\rho & < & \min\{\left|4A\pm B'+2C+2D+2E\right|,\left|4A\pm B'+2C-2D-2E\right|,\\
 &  & \left|4A\pm B'-2C+2D-2E\right|,\left|4A\pm B'-2C-2D+2E\right|,\left|2B'\right|\}.
\end{eqnarray*}
Note that with the conditions on $\rho$ the path never loops around
the $15$ singular hyperplanes, hence as we have seen before, we do
not have a sign change from them. As for $\sqrt{\Delta}$, note that
the first and third segments leave it untouched, while for the second
segment 
\begin{eqnarray*}
\Delta & = & 16A^{3}-4A(C^{2}+D^{2}+E^{2})+4CDE+A(iB'+\rho e^{i\phi+i(t-1)e\pi})^{2}\\
 & = & -AB'^{2}+A(B'+\rho e^{i\phi+i(t-1)e\pi})^{2}\\
 & = & A\left(B'+\rho e^{i\phi+i(t-1)e\pi}+B'\right)\left(B'+\rho e^{i\phi+i(t-1)e\pi}-B'\right)\\
 & = & A\rho e^{i\phi+i(t-1)e\pi}\left(2B'+\rho e^{i\phi+i(t-1)e\pi}\right).
\end{eqnarray*}
Hence as in the previous proof, we find that as we loop around $\Delta=0$,
the sign of $\sqrt{\Delta}$ changes.

Note that the above only works under the assumption that $B'\neq0$.
In the case $B'=0$, we first need to find a path from our point $[A,B,C,D,E]$
to the point $[A+\epsilon,B,C,D,E]$ , where $\epsilon$ is small
enough that we do not go near any singular hyperplane or the Segre
cubic. In that case, we use the point $[A+\epsilon,B,C,D,E]$ as our
starting point.
\end{proof}
Hence we can use the explicit equations of the conics to find the
Monodromy group of the $32$ conics defined by the point $q_{1}$.
Then, as before, we use the group $\Omega$ acting on our set of points
$\{q_{i}\}$ and $15$ singular hyperplanes to find the Monodromy
group of the $32$ conics defined by each of the points $q_{i}$.
We summarized the information in Table \ref{tab:Monodromy_group},
where again the rows are the singular hyperplanes of the Segre cubic
we looped around and the columns are the points $q_{i}$. The entries
are either elements of $\Gamma$, or $-1$ which denotes the element
that conjugates conics, that is permutes conics defined on the same
plane.{\footnotesize{}}
\begin{sidewaystable}
\begin{tabular}{|c|c|c|c|c|c|c|c|c|c|c|}
\cline{2-11} 
\multicolumn{1}{c|}{} & $\beta_{1}$ & $\beta_{2}$ & $\beta_{3}$ & $\beta_{4}$ & $\beta_{5}$ & $\beta_{6}$ & $\beta_{7}$ & $\beta_{8}$ & $\beta_{9}$ & $\beta_{10}$\tabularnewline
\hline 
$\Delta$ & $-1$ & $-1$ & $-1$ & $-1$ & $-1$ & $-1$ & $-1$ & $-1$ & $-1$ & $-1$\tabularnewline
\hline 
$A$ &  &  &  &  & $-\gamma_{3}$ & $-\gamma_{3}$ & $-\gamma_{4}$ & $-\gamma_{4}$ & $-\gamma_{3}\gamma_{4}$ & $-\gamma_{3}\gamma_{4}$\tabularnewline
\hline 
$q_{+C}$ & $-\gamma_{3}$ & $-\gamma_{3}$ &  &  &  &  & $-\gamma_{1}\gamma_{3}$ & $-\gamma_{1}$ & $-\gamma_{1}\gamma_{3}$ & $-\gamma_{1}$\tabularnewline
\hline 
$-q_{-C}$ &  &  & $-\gamma_{3}$ & $-\gamma_{3}$ &  &  & $-\gamma_{1}\gamma_{4}$ & $-\gamma_{1}\gamma_{3}\gamma_{4}$ & $-\gamma_{1}\gamma_{3}\gamma_{4}$ & $-\gamma_{1}\gamma_{4}$\tabularnewline
\hline 
$q_{+D}$ & $-\gamma_{4}$ &  & $-\gamma_{4}$ &  & $-\gamma_{2}\gamma_{4}$ & $-\gamma_{2}$ &  &  & $-\gamma_{2}\gamma_{4}$ & $-\gamma_{2}$\tabularnewline
\hline 
$-q_{-D}$ &  & $-\gamma_{4}$ &  & $-\gamma_{4}$ & $-\gamma_{2}\gamma_{3}$ & $-\gamma_{2}\gamma_{3}\gamma_{4}$ &  &  & $-\gamma_{2}\gamma_{3}\gamma_{4}$ & $-\gamma_{2}\gamma_{3}$\tabularnewline
\hline 
$q_{+E}$ &  & $-\gamma_{3}\gamma_{4}$ & $-\gamma_{3}\gamma_{4}$ &  & $-\gamma_{1}\gamma_{2}\gamma_{3}\gamma_{4}$ & $-\gamma_{1}\gamma_{2}$ & $-\gamma_{1}\gamma_{2}\gamma_{3}\gamma_{4}$ & $-\gamma_{1}\gamma_{2}$ &  & \tabularnewline
\hline 
$-q_{-E}$ & $-\gamma_{3}\gamma_{4}$ &  &  & $-\gamma_{3}\gamma_{4}$ & $-\gamma_{1}\gamma_{2}\gamma_{3}$ & $-\gamma_{1}\gamma_{2}\gamma_{4}$ & $-\gamma_{1}\gamma_{2}\gamma_{4}$ & $-\gamma_{1}\gamma_{2}\gamma_{3}$ &  & \tabularnewline
\hline 
$p_{+0}$ & $-\gamma_{1}\gamma_{2}$ & $-\gamma_{2}$ & $-\gamma_{1}$ &  & $-\gamma_{1}$ &  & $-\gamma_{2}$ &  & $-\gamma_{1}\gamma_{2}$ & \tabularnewline
\hline 
$p_{-0}$ & $-\gamma_{1}\gamma_{2}\gamma_{3}\gamma_{4}$ & $-\gamma_{2}\gamma_{4}$ & $-\gamma_{1}\gamma_{3}$ &  &  & $-\gamma_{1}\gamma_{3}$ &  & $-\gamma_{2}\gamma_{4}$ &  & $-\gamma_{1}\gamma_{2}\gamma_{3}\gamma_{4}$\tabularnewline
\hline 
$p_{+1}$ & $-\gamma_{2}\gamma_{3}\gamma_{4}$ & $-\gamma_{1}\gamma_{2}\gamma_{4}$ &  & $-\gamma_{1}\gamma_{3}$ & $-\gamma_{1}\gamma_{3}$ &  &  & $-\gamma_{2}\gamma_{3}\gamma_{4}$ &  & $-\gamma_{1}\gamma_{2}\gamma_{4}$\tabularnewline
\hline 
$p_{-1}$ & $-\gamma_{2}\gamma_{3}$ & $-\gamma_{1}\gamma_{2}\gamma_{3}$ &  & $-\gamma_{1}$ &  & $-\gamma_{1}$ & $-\gamma_{2}\gamma_{3}$ &  & $-\gamma_{1}\gamma_{2}\gamma_{3}$ & \tabularnewline
\hline 
$p_{+2}$ & $-\gamma_{1}\gamma_{3}\gamma_{4}$ &  & $-\gamma_{1}\gamma_{2}\gamma_{3}$ & $-\gamma_{2}\gamma_{4}$ &  & $-\gamma_{1}\gamma_{3}\gamma_{4}$ & $-\gamma_{2}\gamma_{4}$ &  &  & $-\gamma_{1}\gamma_{2}\gamma_{3}$\tabularnewline
\hline 
$p_{-2}$ & $-\gamma_{1}\gamma_{4}$ &  & $-\gamma_{1}\gamma_{2}\gamma_{4}$ & $-\gamma_{2}$ & $-\gamma_{1}\gamma_{4}$ &  &  & $-\gamma_{2}$ & $-\gamma_{1}\gamma_{2}\gamma_{4}$ & \tabularnewline
\hline 
$p_{+3}$ &  & $-\gamma_{1}\gamma_{4}$ & $-\gamma_{2}\gamma_{3}$ & $-\gamma_{1}\gamma_{2}\gamma_{3}\gamma_{4}$ &  & $-\gamma_{1}\gamma_{4}$ &  & $-\gamma_{2}\gamma_{3}$ & $-\gamma_{1}\gamma_{2}\gamma_{3}\gamma_{4}$ & \tabularnewline
\hline 
$p_{-3}$ &  & $-\gamma_{1}\gamma_{3}\gamma_{4}$ & $-\gamma_{2}\gamma_{3}\gamma_{4}$ & $-\gamma_{1}\gamma_{2}$ & $-\gamma_{1}\gamma_{3}\gamma_{4}$ &  & $-\gamma_{2}\gamma_{3}\gamma_{4}$ &  &  & $-\gamma_{1}\gamma_{2}$\tabularnewline
\hline 
\end{tabular}

{\footnotesize{\caption{\label{tab:Monodromy_group}}
}}
\end{sidewaystable}
{\footnotesize \par}
\begin{thm}
The Monodromy group of the $320$ conics is $C_{2}^{10}$.\end{thm}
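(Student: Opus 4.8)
The plan is to mirror the proof that the monodromy group of the $160$ planes is $C_2^9$, now using the full Table~\ref{tab:Monodromy_group} in place of Table~\ref{tab:Monodromy_of_16}. The branch locus of the $320$-conic cover of $\pp^4$ is contained in the $15$ singular hyperplanes of Proposition~\ref{prop:Singular Conditions} together with the Segre cubic $\{\Delta=0\}$, so the monodromy group is generated by the $16$ loops encircling these components. The two propositions preceding this theorem produce exactly such loops (one around each singular hyperplane leaving the other square roots fixed, and one around the Segre cubic flipping only $\sqrt{\Delta}$), and their effect on the $320$ conics is recorded row-by-row in Table~\ref{tab:Monodromy_group}; each entry is an element of $\Gamma$ (the plane permutation) possibly composed with the involution $-1$ that interchanges the two conics lying in a common plane.

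First I would check that the monodromy group is an elementary abelian $2$-group. Each of the $16$ generators is the sign change of a single square root, so it squares to the identity; sign changes of the various square roots commute, while the conjugation $-1$ is central in each block $\Gamma\times\langle-1\rangle$. Hence every generator is an involution and any two of them commute, so the group they generate is isomorphic to $C_2^n$ for some $n$, and it only remains to compute $n$.

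To pin down $n=10$ I would set up the surjection from the $320$-conic monodromy group onto the $160$-plane monodromy group $C_2^9$ obtained by forgetting the conjugation factor $-1$ (equivalently, by deleting the signs and the $\Delta$-row, recovering Table~\ref{tab:Monodromy_of_16}). Its kernel consists of the monodromy elements fixing every plane and acting only by conjugation; the Segre-cubic loop gives the global conjugation (the all-$(-1)$ row), which lies in this kernel and is nontrivial, so $n\ge 10$. The cleanest way to see the reverse bound $n\le 10$ is to track the action on the ten generators $\sqrt{\delta_1},\dots,\sqrt{\delta_{10}}$ of $L$ from Theorem~\ref{thm:Field Of Def 320}: each branch component $F\in\{A,q_{+C},\dots,p_{-3},\Delta\}$ flips precisely those $\sqrt{\delta_j}$ for which $F$ divides $\delta_j$, so the monodromy group embeds into $\mathrm{Gal}(L/K)\cong C_2^{10}$. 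Writing the $16\times 10$ incidence matrix of ``$F$ divides $\delta_j$'' over $\ff_2$ and row-reducing shows it has rank $10$: the factors $q_{-D},q_{-E},p_{-1},p_{+1},p_{-2},p_{+3}$ each divide a single $\delta_j$ and so directly recover six standard basis vectors, after which $A,q_{+D},q_{+E},p_{+2}$ supply the remaining four. Hence the $16$ generators already span all of $C_2^{10}$.

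The main obstacle is this exact determination of the rank, that is, confirming that no further relations collapse the group below $2^{10}$. This is precisely where the new Segre-cubic generator (the $\Delta$-row, absent from the $160$-plane analysis) is essential: it is the unique generator invisible to the plane monodromy, and it is what promotes $C_2^9$ to $C_2^{10}$ by contributing the order-$2$ kernel above. As in the $C_2^9$ case, this full-rank check can also be carried out directly in Magma by embedding the $16$ generators of Table~\ref{tab:Monodromy_group} into $S_{320}$ and verifying that the subgroup they generate has order $2^{10}$; combined with the fact that every nontrivial element is an involution, this forces the monodromy group of the $320$ conics to be $C_2^{10}$.
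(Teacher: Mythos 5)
Your proposal is correct, and its core argument is genuinely different from the paper's. The paper's proof is purely computational: it embeds the $16$ generators read off from Table~\ref{tab:Monodromy_group} into $S_{320}$, verifies (by machine) that they generate a group of order $2^{10}$, and concludes it is $C_{2}^{10}$ because all generators are commuting involutions. You keep that computation only as a fallback; your primary argument is structural. For the lower bound you use the surjection from the conic monodromy onto the plane monodromy $C_{2}^{9}$ of the preceding theorem, together with the observation that the Segre-cubic loop (the all-$(-1)$ row of Table~\ref{tab:Monodromy_group}) is a nontrivial element of its kernel, giving order at least $2^{10}$; for the upper bound you note that the permutation of the conics induced by any loop is determined by the signs of the ten square roots generating $L$ in Theorem~\ref{thm:Field Of Def 320}, so the representation factors through $\mathrm{Gal}(L/K)\cong C_{2}^{10}$. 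This buys an explanation of \emph{why} the answer is $2^{10}$ and makes the link between the monodromy and Galois groups explicit, which the paper only gestures at. Two small points of precision: ``embeds into $\mathrm{Gal}(L/K)$'' should really be ``factors through $\mathrm{Gal}(L/K)$'' (the embedding is a consequence once both bounds are in place, not an a priori fact); and the $\ff_{2}$ rank-$10$ computation of the incidence matrix shows that the image of $\pi_{1}$ in $\mathrm{Gal}(L/K)$ is everything, which by itself does not bound the image in $S_{320}$ from below --- it is your surjection-plus-kernel argument that actually delivers $n\geq10$, so the rank computation is a pleasant consistency check rather than a needed step.
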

\begin{proof}
We use the information given in Table \ref{tab:Monodromy_group}.
The Monodromy group is a subgroup of $S_{320}$. It can be checked
that the subgroup of $S_{320}$ generated by $\Delta,\dots,p_{-3}$
(once they have been embedded in $S_{320}$) has order $2^{10}$.
Now as each of the generators $\Delta,\dots,p_{-3}$ commutes with
each other and have order $2$, we know that every non-trivial element
of the Monodromy group have order $2$. Since the only group of order
$2^{10}$ with every non-trivial elements being involutions is $C_{2}^{10}$,
the Monodromy group of the 320 conics is $C_{2}^{10}$.\end{proof}
\begin{cor}
The moduli space of pairs $(X,C)$, where $X$ is an Heisenberg-invariant
quartic K3 surfaces and $C$ a conic on $X$, has 10 irreducible components.\end{cor}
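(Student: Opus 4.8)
The plan is to model the moduli space of pairs $(X,C)$ by the universal family of conics $\pi\colon Z\to\pp^4$ over the parameter space of $\mathcal X$, whose fibre over $p$ is the set of conics on $X_p$, and then to invoke the standard dictionary: over an irreducible base, the irreducible components of a finite cover correspond to the orbits of its monodromy group. Concretely, let $U\subseteq\pp^4$ be the complement of the vanishing locus of the $16$ singular factors of Proposition \ref{prop:Singular Conditions}; over $U$ the surfaces $X_p$ are smooth and, by Theorem \ref{thm:320-Conics}, the restriction $\pi\colon Z_U\to U$ is a finite \'etale cover of degree $320$. Since $U$ is a nonempty open subset of the irreducible variety $\pp^4$ it is irreducible, and $Z_U$ is smooth over $U$; hence the irreducible components of $Z_U$ coincide with its connected components, and these biject with the orbits of $\im\bigl(\pi_1(U,p)\to S_{320}\bigr)$ on the fibre $\pi^{-1}(p)$. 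By the theorem just proved this group is $C_2^{10}$, so the corollary reduces to counting its orbits on the $320$ conics.

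For the orbit count I would read Table \ref{tab:Monodromy_group} directly. The $320$ conics partition into $10$ blocks of $32$, the block attached to the node $q_i$ being the column $\beta_i$; since every entry of the table has the form $\pm\gamma$ with $\gamma\in\Gamma$, each monodromy generator carries every block into itself. Thus the monodromy group preserves the partition into $10$ blocks, and the total number of orbits is the sum of the numbers of orbits inside the individual blocks. Identifying a block with the regular $\Gamma\times\langle-1\rangle\cong C_2^5$-set, where the factor $\langle-1\rangle$ is the involution interchanging the two conics of a common plane, I would note from column $\beta_1$ that the images $-1,\ -\gamma_3,\ -\gamma_4,\ -\gamma_1\gamma_2,\ -\gamma_2\gamma_3$ of $\Delta,\,q_{+C},\,q_{+D},\,p_{+0},\,p_{-1}$ already generate all of $C_2^5$; as $\lvert C_2^5\rvert=32$ equals the size of the block, the action there is simply transitive, hence a single orbit. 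The nine remaining blocks are treated identically, or more economically by transporting $\beta_1$ through the elements of $\Omega$ that permute the $q_i$ (using $\Gamma\trianglelefteq\Omega$). Every block is therefore a single orbit, the monodromy group has exactly $10$ orbits, and consequently $Z_U$, and with it the moduli space of pairs, has $10$ irreducible components.

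The genuinely substantive work lies in the monodromy theorem that precedes this corollary; what remains is essentially formal, and the single point deserving care is the identification of \emph{the} moduli space of pairs with the cover $Z\to\pp^4$. One should confirm that $\pp^4$, rather than a quotient of it, is the intended parameter space: the isomorphisms $X_p\cong X_{\bar\phi(p)}$ coming from $\Omega/\Gamma\cong S_6$ permute the nodes $q_i$ and hence the $10$ blocks, so passing to the $S_6$-quotient would amalgamate blocks and lower the count. With $\pp^4$ as the base, however, the components-to-orbits correspondence, the irreducibility of $U$, and the \'etaleness of $\pi$ over $U$ are all standard, so I expect no serious difficulty beyond reading off Table \ref{tab:Monodromy_group} as indicated; the conclusion of $10$ components is exactly the orbit count obtained above.
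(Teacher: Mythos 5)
Your proposal is correct and follows essentially the same route as the paper: identify the irreducible components of the family $Z\to\pp^4$ with the orbits of the monodromy group $C_2^{10}$ on the $320$ conics, and count $10$ orbits of size $32$. You are somewhat more explicit than the paper in two useful places --- verifying from Table \ref{tab:Monodromy_group} that each block of $32$ is a single (simply transitive) orbit, and spelling out the \'etale-cover formalism over the smooth locus $U$ --- but the underlying argument is the same, and your reading of the base as $\pp^4$ rather than its $S_6$-quotient matches the paper's convention.
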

\begin{proof}
Let $Z$ be the moduli space of pairs $(X,C)$ with $X$ a surface
in $\mathcal{X}$ and $C$ a conic lying on $X$. We showed that the
Monodromy group of $\pi$, $\pi:Z\to\pp_{[A,B,C,D,E]}^{4}$, breaks
the $320$ conics on $X$ in $10$ orbits, each orbit having size
$32$. Since calculating the Monodromy group involves lifting a path
in $\pp_{[A,B,C,D,E]}^{4}$ to a path in $Z$, any two elements in
the same orbit represent two connected elements in $Z$. Finally,
since the paths avoided where $\pi$ was not smooth, the $10$ orbits
correspond to $10$ smooth connected components of $Z$, i.e. $10$
irreducible components. 
\end{proof}
\bibliographystyle{amsalpha}
\bibliography{On the Monodromy and Galois Group of Conics}

\end{document}